\UseRawInputEncoding
\documentclass[12pt, reqno]{amsart}
\usepackage[margin=1in]{geometry}
\usepackage{amssymb,latexsym,amsmath,amscd,amsfonts}
\usepackage{latexsym}
\usepackage[mathscr]{eucal}
\usepackage{bm}
\usepackage{mathptmx}
\usepackage{amssymb}
\usepackage{amsthm}
\usepackage{dcolumn}
\usepackage[all]{xy}
\usepackage{enumitem}
\usepackage[utf8]{inputenc}

\def \qed {\hfill \vrule height6pt width 6pt depth 0pt}
\def\textmatrix#1&#2\\#3&#4\\{\bigl({#1 \atop #3}\ {#2 \atop #4}\bigr)}
\def\dispmatrix#1&#2\\#3&#4\\{\left({#1 \atop #3}\ {#2 \atop #4}\right)}

\newcommand{\beg}{\begin{equation}}
        \newcommand{\eeg}{\end{equation}}
\newcommand{\ben}{\begin{eqnarray*}}
        \newcommand{\een}{\end{eqnarray*}}


\newtheorem{theorem}{Theorem}[section]
\newtheorem{cor}[theorem]{Corollary}
\newtheorem{Lemma}[theorem]{Lemma}

\numberwithin{equation}{section} \theoremstyle{definition}
\newtheorem{definition}[theorem]{Definition}
\newtheorem{remark}[theorem]{Remark}

\newtheorem{example}[theorem]{Example}

\def\textmatrix#1&#2\\#3&#4\\{\bigl({#1 \atop #3}\ {#2 \atop #4}\bigr)}
\def\dispmatrix#1&#2\\#3&#4\\{\left({#1 \atop #3}\ {#2 \atop #4}\right)}

\begin{document}
	\title[Dilation and Birkhoff-James orthogonality]{Dilation and Birkhoff-James orthogonality}
	\author[Pal and Roy]{Sourav Pal and Saikat Roy}
	
	\address[Sourav Pal]{Mathematics Department, Indian Institute of Technology Bombay,
		Powai, Mumbai - 400076, India.} \email{sourav@math.iitb.ac.in , souravmaths@gmail.com}
	
	\address[Saikat Roy]{Mathematics Department, Indian Institute of Technology Bombay, Powai, Mumbai-400076, India.} \email{saikat@math.iitb.ac.in}		
	
	\keywords{Birkhoff-James orthogonality, $\varepsilon$-approximate Birkhoff-James orthogonality, Sch\"{a}ffer dilation, unitary $\rho$-dilation, And\^{o} dilation, Regular unitary dilation }	
	
	\subjclass[2010]{47A20, 46B20, 46B28 }	
	
	

	\begin{abstract}
	
For any two elements $x$ and $y$ in a normed space $\mathbb{X}$, $x$ is said to be Birkhoff-James orthogonal to $y$, denoted by $x\perp_B y$, if $ \|x+\lambda y\|\geq \|x\|$ for every scalar $\lambda$. Also, for any $\varepsilon\in [0,1)$, $x$ is said to be $\varepsilon$-approximate Birkhoff-James orthogonal to $y$, denoted by $x\perp_B^\varepsilon y$, if 
\begin{equation*}
\|x+\lambda y\|^2\geq \|x\|^2-2\varepsilon\|x\|\|\lambda y\|, \qquad \mathrm{for~all~scalars~}\lambda.    
\end{equation*}
For any $\rho >0$, a unitary operator $U$ acting on a Hilbert space $\mathcal K$ is said to be a unitary $\rho$-dilation of an operator $T$ on a Hilbert space $\mathcal H$ if $\mathcal H \subseteq \mathcal K$ and $T^n= \rho P_{\mathcal H} U^n|_{\mathcal H}$ for every nonnegative integer $n$, where $P_{\mathcal H}: \mathcal K \rightarrow \mathcal H$ is the orthogonal projection. Also, when $\rho=1$ and $T$ is a contraction, $U$ is called a unitary dilation of $T$. We obtain the following main results.

\medskip

\begin{enumerate}

\item We find necessary and sufficient conditions such that for any two contractions $T, A$ on $\mathcal H$, their Sch\"{a}ffer unitary dilations $\widetilde{U_T}$ and $\widetilde{U_A}$ on the space $\oplus_{-\infty}^{\infty} \,\mathcal H$ are Birkhoff-James orthogonal. Also, counter example shows that in general $\widetilde{U_T} \not\perp_B \widetilde{U_A}$ even if $T \perp_B A$.

\medskip

\item For any $\rho>0$ and for two Hilbert space operators $T,A$ with $T \perp_B A$, we show that if $\|T\|=\rho$ then $U_T \perp_B U_A$ for any unitary $\rho$-dilations $U_T$ of $T$ and $U_A$ of $A$ acting on a common Hilbert space. Also, we show by an example that the condition that $\|T\|=\rho$ cannot be ignored.

\medskip 

\item For any $\rho >0$, we explicitly construct examples of Hilbert space operators $T, A$ such that $T \not\perp_B A$ but any of their unitary $\rho$-dilations $U_T, U_A$ acting on a common Hilbert space are Birkhoff-James orthogonal.

\medskip

\item We find a characterization for the $\varepsilon$-approximate Birkhoff-James orthogonality of operators on complex Hilbert spaces.

\medskip

\item For any $\rho >0$ and for any Hilbert space operators $T,A$, we find a sharp bound on $\varepsilon$ such that $T \perp_B A$ implies $U_T \perp_B^\varepsilon U_A$ for any unitary $\rho$-dilations $U_T$ of $T$ and $U_A$ of $A$ acting on a common space. Also, we show by an example that in general the bound on $\varepsilon$ cannot be improved.

\medskip

\item We construct families of generalized Sch\"{a}ffer-type unitary dilations for a Hilbert space contraction in two different ways. Then we show that one of them preserves Birkhoff-James orthogonality while any two members $\mathbb U_T, \mathbb U_A$ from the other family are always Birkhoff-James orthogonal irrespective of the orthogonality of $T$ and $A$.

\medskip

\item We show that And\^{o} dilation of a pair of commuting contractions of the form $(T,ST)$, where $S$ is a unitary that commutes with $T$, are orthogonal. Also, we explore orthogonality of regular unitary dilation of a pair of commuting contractions. However, Birkhoff-James orthogonality is independent of commutativity of operators.

\end{enumerate}

	\end{abstract}

	\maketitle

\tableofcontents

\section{Introduction}

\vspace{0.4cm}

\noindent Throughout the paper, all operators are bounded linear operators acting on separable complex Hilbert spaces unless and otherwise stated. For a Hilbert space $\mathcal{H}$, $\mathcal{B}(\mathcal{H})$ denotes the algebra of operators acting on $\mathcal{H}$ endowed with the operator norm. A contraction is an operator with norm not greater than $1$. The set of unit vectors in a normed space $X$ will be denoted by $S_X$. The set of integers and positive integers are denoted by $\mathbb Z$ and $\mathbb N$ respectively. Orthogonality of any two vectors in a normed space will always mean the Birkhoff-James orthogonality of them. Also, orthogonality of two operators are considered only when they act on the same Hilbert space.

\medskip

In this article, we investigate the Birkhoff-James orthogonality of unitary dilations of a pair of orthogonal contractions, or even more generally the orthogonality of unitary $\rho$-dilations of two orthogonal operators. Also, we contribute individually to these two rich classical concepts by finding a characterization of $\varepsilon$-approximate Birkhoff-James orthogonality of a pair of (complex) Hilbert space operators and on the other hand by constructing different explicit Sch\"{a}ffer-type unitary dilations for a contraction. We provide conclusive examples whenever Birkhoff-James orthogonality is not preserved or two dilations are orthogonal without the basic operators being orthogonal. It is well-known that Birkhoff-James orthogonality in $\mathcal B(\mathcal H)$ is independent of the commutativity of two operators, for example the identity operator $I_{\mathcal H}$ commutes with every operator but is not orthogonal to all operators (e.g. $-I_{\mathcal H}$). However, we will also see interplay between Birkhoff-James orthogonality and unitary dilations of a pair of commuting contractions. We begin with a brief introduction on each of these two fundamental concepts in operator theory. 

\subsection{Birkhoff-James orthogonality}Birkhoff-James orthogonality \cite{Birkhoff, James1} in a normed space is a generalization of the inner product orthogonality in an inner product space. Given any two elements $x,y$ in a normed space $\mathbb{X}$, $x$ is said to be \textit{Birkhoff-James orthogonal} to $y$, denoted by $x\perp_B y$, if
\[
\|x+\lambda y\|\geq \|x\|, \quad \mathrm{for~every~scalar}~\lambda.
\]
If $\mathbb X$ is a Hilbert space $x\perp_B y$ if and only if $\langle x, y \rangle = 0$. Also, it easily follows that Birkhoff-James orthogonality is homogeneous, i.e. $x\perp_B y$ if and only if $\alpha x \perp_B \beta y$, for all scalars $\alpha, \beta$. However, unlike the inner product orthogonality, Birkhoff-James orthogonality is neither symmetric nor additive in general  \cite{James1, James2}. To study the symmetric structure of a norm with respect to Birkhoff-James orthogonality, the concept of left-symmetric and right-symmetric points were introduced in \cite{Sain1}. A point $x$ in a normed space $\mathbb{X}$ is said to be left-symmetric (right-symmetric) if $x\perp_B y$ (or, $y\perp_B x$ respectively) implies that $y\perp_B x$ ($x\perp_B y$), for all $y\in \mathbb{X}$.
Birkhoff-James orthogonality in the spaces of operators was first characterized by Magajna \cite{Magajna}. The finite-dimensional version of this result is more popularly known as Bhatia-\v{S}emrl Theorem (see \cite{BS}).

\begin{theorem} [\cite{Magajna}, Lemma 2.2 \& \cite{BS}, Theorem 1.1] \label{Bhatia-Semrl}

Let $T,A \in \mathcal B(\mathcal H)$ for a finite-dimensional Hilbert space $\mathcal{H}$. Then $T\perp_B A$ if and only if there is a unit vector $x_0$ in $\mathcal{H}$ such that $\|Tx_0\|=\|T\|$ and $\langle Tx_0, Ax_0\rangle=0$. Moreover, if $\mathcal{H}$ is infinite-dimensional, then $T\perp_B A$ if and only if there exists a sequence of unit vectors $(x_n)\subseteq \mathcal{H}$ such that 
\[
\lim_{n\rightarrow \infty}\|Tx_n\|=\|T\| \quad \mathrm{and} \quad\lim_{n\rightarrow \infty} \langle Tx_n, Ax_n\rangle=0.
\]
\end{theorem}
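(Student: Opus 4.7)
The plan is to handle the sufficiency direction by a direct one-line expansion and to attack necessity in finite dimensions via a convex-analysis / envelope-theorem argument that culminates in an appeal to the Toeplitz--Hausdorff theorem on convexity of numerical ranges; the infinite-dimensional statement is then recovered by a sequential version of the same reasoning with approximate maximizers in place of exact ones.

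First I would dispatch sufficiency: if $x_{0}\in S_{\mathcal H}$ satisfies $\|Tx_{0}\|=\|T\|$ and $\langle Tx_{0},Ax_{0}\rangle=0$, then for any scalar $\lambda$,
\[
\|T+\lambda A\|^{2}\;\geq\;\|(T+\lambda A)x_{0}\|^{2}\;=\;\|Tx_{0}\|^{2}+2\,\mathrm{Re}\bigl(\overline{\lambda}\langle Tx_{0},Ax_{0}\rangle\bigr)+|\lambda|^{2}\|Ax_{0}\|^{2}\;=\;\|T\|^{2}+|\lambda|^{2}\|Ax_{0}\|^{2},
\]
so $T\perp_{B}A$. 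The same chain applied along a sequence $(x_{n})$ with the prescribed limiting behaviour covers sufficiency in the infinite-dimensional case, since passing to $n\to\infty$ still yields $\|T+\lambda A\|^{2}\geq\|T\|^{2}$.

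For the harder direction in finite dimensions, consider the function $f(\lambda):=\|T+\lambda A\|^{2}$ on $\mathbb{C}$. Each slice $g_{x}(\lambda):=\|(T+\lambda A)x\|^{2}$ is a convex quadratic in $\lambda$, and $f=\sup_{x\in S_{\mathcal H}}g_{x}$ is therefore continuous and convex; the hypothesis $T\perp_{B}A$ says exactly that $f$ attains its global minimum at $0$. By a Danskin/envelope-theorem computation, the one-sided directional derivative of $f$ at $0$ in direction $v\in\mathbb{C}$ is
\[
f'(0;v)\;=\;\sup_{x\in M(T)} 2\,\mathrm{Re}\bigl(v\,\overline{\langle Tx,Ax\rangle}\bigr),\qquad M(T):=\{x\in S_{\mathcal H}:\|Tx\|=\|T\|\},
\]
and $M(T)$ is non-empty and compact in finite dimension. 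The minimality condition $f'(0;v)\geq 0$ for every $v\in\mathbb{C}$ is equivalent, by a Hahn--Banach separation, to $0\in\mathrm{conv}(S)$, where $S:=\{\langle Tx,Ax\rangle:x\in M(T)\}\subset\mathbb{C}$.

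The step I expect to be the main obstacle is upgrading $0\in\mathrm{conv}(S)$ to the genuine requirement $0\in S$: letting the direction $v$ vary gives access to many different points of $S$ but no obvious single $x_{0}$ killing $\langle Tx_{0},Ax_{0}\rangle$ outright. The resolution is that $M(T)$ coincides with the unit sphere of the eigenspace $V:=\ker(T^{*}T-\|T\|^{2}I_{\mathcal H})$---via equality in the Cauchy--Schwarz bound $\langle T^{*}Tx,x\rangle\leq\|T^{*}Tx\|\|x\|$---and hence the complex conjugate of $S$ is precisely the numerical range of the compressed operator $P_{V}T^{*}A|_{V}:V\to V$. The Toeplitz--Hausdorff theorem asserts that any numerical range is convex, so $S$ is convex and $0\in\mathrm{conv}(S)=S$, producing the required unit vector $x_{0}$. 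For the infinite-dimensional case, where $M(T)$ may be empty and unit balls are no longer compact, I would take $\lambda_{n}\to 0$ and approximate maximizers $x_{n}\in S_{\mathcal H}$ with $\|(T+\lambda_{n}A)x_{n}\|\geq\|T+\lambda_{n}A\|-1/n^{2}$, verify $\|Tx_{n}\|\to\|T\|$ via the triangle inequality, and then engineer the directions $\lambda_{n}$ diagonally---or equivalently, apply the finite-dimensional theorem to a well-chosen increasing chain of finite-dimensional subspaces on which $T$ is almost normed---to force $\langle Tx_{n},Ax_{n}\rangle\to 0$.
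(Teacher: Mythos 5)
The paper itself offers no proof of this statement: it is imported verbatim from Magajna and Bhatia--\v{S}emrl, so there is no internal argument to compare against and your attempt must stand on its own. Your sufficiency direction (in both the finite- and infinite-dimensional forms) is correct, and your finite-dimensional necessity argument is also sound and essentially the Bhatia--\v{S}emrl route: the identification $M(T)=S_V$ with $V=\ker(T^*T-\|T\|^2I)$, the Danskin computation of $f'(0;v)$, the separation step giving $0\in\mathrm{conv}(S)$, and the observation that $S$ (up to conjugation) is the numerical range of a compression of $A^*T$ to $V$, hence convex by Toeplitz--Hausdorff, together yield the required $x_0$.

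The genuine gap is in the infinite-dimensional necessity direction. The per-direction variational argument only produces, for each fixed $v$, a norming sequence $(x_n^{v})$ of $T$ with $\liminf_n \mathrm{Re}\bigl(\overline{v}\langle Tx_n^{v},Ax_n^{v}\rangle\bigr)\geq 0$; these sequences depend on $v$, and no diagonal or interleaving procedure combines them into a single norming sequence along which $\langle Tx_n,Ax_n\rangle\to 0$ (already $v=1$ and $v=-1$ give two \emph{different} sequences carrying one-sided bounds of opposite sign). Your alternative of passing to an increasing chain of finite-dimensional subspaces also fails as stated, because Birkhoff--James orthogonality is not inherited by compressions: $\|P_F(T+\lambda A)|_F\|\geq\|P_FT|_F\|$ does not follow from $\|T+\lambda A\|\geq\|T\|$, so the finite-dimensional theorem cannot be invoked on $P_FT|_F$ and $P_FA|_F$. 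What is actually needed is the infinite-dimensional replacement for Toeplitz--Hausdorff that you correctly identified as the crux in finite dimensions: the set of limits $\lim_n\langle Tx_n,Ax_n\rangle$ taken over norming sequences of $T$ is nonempty, compact and convex --- a Stampfli-type convexity statement for a maximal numerical range of the kind recalled in Equation-(\ref{eqn:sec3-03}) of the paper. Once that convexity is established, your separation argument shows $0$ lies in this set, which is precisely the desired conclusion; without it, the necessity direction in infinite dimensions is not proved.
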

A sequence $(x_n)$ is called a \textit{norming sequence} for an operator $T$ if ${\displaystyle \lim_{n\rightarrow \infty}\|Tx_n\|=\|T\| }$. There are several independent proofs of the above theorem in the literature, e.g. \cite{Bhattacharyya & Grover, Keckic, Roy & Bagchi, RSS, Sain & Paul, SPM}. It follows from \cite{Turnsek} that the right-symmetric points in $\mathcal{B}(\mathcal{H})$ are the scalar multiples of isometries or co-isometries, whereas the only left-symmetric point in $\mathcal{B}(\mathcal{H})$ is the zero operator. Birkhoff-James orthogonality turned out to be effctive and useful in studying the geometry of spaces of operators \cite{James3,  Roy & Sain}. Recently, Chmieli\'{n}ski et al. \cite{CW, CSW} extended the notion of Birkhoff-James orthogonality to $\varepsilon$-approximate Birkhoff-James orthogonality.
\begin{definition}
For any $\varepsilon\in [0,1)$ and for any two elements $x,y$ in a normed space $\mathbb{X}$, we say that $x$ is $\varepsilon$-\textit{approximate Birkhoff-James orthogonal} to $y$, denoted by $x\perp_B^\varepsilon y$, if 
\begin{equation}\label{Definition of Approx. orthogonality}
\|x+\lambda y\|^2\geq \|x\|^2-2\varepsilon\|x\|\|\lambda y\|, \quad \mathrm{for~all~scalars~}\lambda.    
\end{equation}
\end{definition}
It is easy to see that the approximate orthogonality is homogeneous and for $\varepsilon=0$ it coincides with Birkhoff-James orthogonality. We learn from the literature (e.g. see Theorem 2.3 in \cite{CSW}) that for any $\varepsilon \in [0,1)$ and for any $x,y$ in a real normed space $\mathbb{X}$, $ x\perp_B^\varepsilon y$ if and only if there exists a functional $f\in\mathbb{X}^*$ with $\|f\|=1$ such that $f(x)=\|x\|$ and $|f(y)|\leq \varepsilon\|y\|$.
A characterization for approximate orthogonality of complex-matrices was obtained in \cite{Roy & Sain II}. Also, an interested reader is referred to \cite{Woijcik} for further reading on approximate orthogonality.

\subsection{Unitary and unitary $\rho$- dilations}
A contraction $T$ acting on a Hilbert space $\mathcal{H}$ is said to possess a \textit{unitary dilation} (\textit{or, an isometric dilation}) if there is a unitary (or, an isometry) $U$ on a Hilbert space $\mathcal{K}\supseteq\mathcal{H}$ such that
\[
T^n=P_{\mathcal{H}}U^n|_\mathcal{H}\quad \text{ for all }\;\; n\in \mathbb N \cup \{ 0 \},
\]
where $P_\mathcal{H}:\mathcal{K} \to \mathcal H$ is the orthogonal projection. Moreover, such a unitary dilation is called \textit{minimal} if
\begin{equation} \label{eqn:sec-1-02}
\mathcal{K}=\bigvee_{n=-\infty}^\infty U^n\mathcal{H}=\overline{\mathrm{span}}\{U^nh:~h\in \mathcal{H},~n\in \mathbb{Z}\}.
\end{equation}
Minimality of an isometric dilation is defined in an analogous manner as in (\ref{eqn:sec-1-02}) by considering $n$ from the set of non-negative integers. The literature (e.g. \cite{NFBK}) tells us that minimal unitary (or, isometric) dilation of a contraction is unique upto unitary equivalence.
One of the foundational results in operator theory due to Sz.-Nagy \cite{Nagy} states that every contraction possesses a minimal unitary dilation. Later, Sch\"{a}ffer \cite{Schaffer} showed an explicit construction (as in Equation-(\ref{eqn:uni-dil})) of unitary dilation of a contraction.

The notion of unitary dilation of a contraction was further extended to unitary $\rho$-dilation by introducing $\zeta_\rho$ class of operators \cite{Berger, Nagy & Foias}. Given any $\rho > 0$, an operator $T$ on a Hilbert space $\mathcal{H}$ is said to belong to class $\zeta_\rho$ or simply to possess a \textit{unitary} $\rho$-\textit{dilation} if there exists a unitary operator $U$ on some Hilbert space $\mathcal{K}\supseteq \mathcal{H}$ such that
\[
T^n=\rho P_{\mathcal{H}}U^n|_\mathcal{H},\quad n\in \mathbb N \cup \{0\}.
\]
For any $T\in \zeta_\rho$, it is easy to see that $\|T\|\leq \rho$. It follows from \cite{Durszt} that the class $\zeta_\rho$ is strictly increasing, i.e., $\zeta_\rho \subsetneq \zeta_{\rho'}$ for $0<\rho<\rho'$. For $\rho=1$, a unitary $\rho$-dilation is just a unitary dilation of a contraction which is to say that the class $\zeta_1$ precisely consists of all contractions. For further details of $\zeta_\rho$ classes of operators, a keen reader is referred to \cite{Nagy & Foias, NFBK}.

\subsection{Motivation and main results of the paper} In Lemma \ref{Orthogonality of extensions}, we show that if two operators $T,A \in \mathcal B(\mathcal H)$ satisfy $T \perp_B A$, then any norm preserving extension $\widetilde{T}$ on $\mathcal K \supseteq \mathcal H$ of $T$ is Birkhoff-James orthogonal to any extension $\widetilde{A} \in \mathcal B(\mathcal K)$ of $A$. This triggers a natural question: what if we replace extensions of $T, A$ by unitary $\rho$-dilations of $T,A$ acting on a common space for an appropriate $\rho >0$ ? This motivates us to explore the connection between Birkhoff-James orthogonality of two operators and that of their respective unitary $\rho$-dilations.

\medskip

In Theorem \ref{Orthogonality of rho dilations}, which is a main result of this paper, we prove that if $T\perp_B A$ and $\|T\|=\rho$ then any unitary $\rho$-dilations $U_T,U_A$ (acting on a common space) of $T$ and $A$ respectively are orthogonal. Also, Example \ref{Example: I} shows that the condition that $\|T\|=\rho$ is crucial and cannot be ignored. However, orthogonality of $T$ and $A$ is not necessary for having orthogonal unitary $\rho$-dilations. In Example \ref{Example for the converse}, we construct a pair of scalar matrices $T,A$ such that $T \not\perp_B A$ but any two unitary $\rho$-dilations $U_T, U_A$ of $T$ and $A$ respectively acting on a common space are Birkhoff-James orthogonal. Thus, it follows from Theorem \ref{Orthogonality of rho dilations} that Sch\"{a}ffer unitary dilations (as in Equation-(\ref{eqn:uni-dil})) of two orthogonal contractions $T,A$ are orthogonal if $\|T\|=1$. So, it is legitimate to ask in general when Sch\"{a}ffer unitary dilations of a pair of contractions are orthogonal. In Theorem \ref{Orthogonality of Halmos Blocks}, we find necessary and sufficient conditions such that the Sch\"{a}ffer unitary dilations of a pair of contractions are orthogonal. We move to seek a converse to this result, i.e. if orthogonality of unitary $\rho$-dilations $U_T$ of $T$ and $U_A$ of $A$ implies the orthogonality of $T$ and $A$. In Example \ref{Example for the converse} we provide a pair of matrices $T,A$ such that $T \not\perp_B A$ but any unitary $\rho$-dilations $U_T,U_A$ (acting on a common space) of $T,A$ respectively are orthogonal. 

In Section \ref{sec:07}, we construct families of generalized Sch\"{a}ffer-type unitary dilations for a contraction in two different ways. Then we show that one of them preserves Birkhoff-James orthogonality while any two members $\mathbb U_T, \mathbb U_A$ from the other family are always Birkhoff-James orthogonal irrespective of the orthogonality of $T$ and $A$.

\medskip

 Even though Birkhoff-James orthogonality does not depend upon the commutativity of two operators as we already mentioned, we investigate in Section \ref{sec:04} orthogonality of And\^{o} dilations of a pair of commuting contractions. We show that And\^{o}'s commuting isometric dilation $(V_T, V_{ST})$ of a commuting pair of contractions of the form $(T, ST)$ satisfies $V_T\perp_B V_{ST}$. Also, in the same Section we show that a pair of orthogonal commuting contractions $(T_1,T_2)$ always possesses an orthogonal regular unitary dilation $(U_1,U_2)$.

In Theorem \ref{Basic: II}, we find a characterization for the $\varepsilon$-approximate Birkhoff-James orthogonality of operators on complex Hilbert spaces. Then, in Theorem \ref{Approximate orthogonality of dilation} we apply this characterization to show that if $T\perp_B A$ then $U_T \perp_B^\varepsilon U_A$ for any unitary $\rho$-dilations $U_T$ of $T$ and $U_A$ of $A$ acting on a common space. Indeed, for any $\rho >0$, we find a bound on $\varepsilon$ such that $T \perp_B A$ implies $U_T \perp_B^\varepsilon U_A$. Also, we show by an example that in general this bound on $\varepsilon$ is sharp and cannot be improved.

Section \ref{sec:02} consists of a few preparatory and basic results.

\vspace{0.2cm}

\section{A few preparatory results on dilation, extension and orthogonality} \label{sec:02}

\vspace{0.3cm}

\noindent In this Section, we prove a few preparatory results on orthogonality of extensions and dilations of operators. Some of these results will be used in sequel. 

\begin{Lemma}\label{Orthogonality of extensions}
Let $T$ and $A$ be two bounded linear operators acting on a Hilbert space $\mathcal{H}$. Let $\widetilde{T}$ be a norm preserving extension of $T$ on some Hilbert space $\mathcal{K}\supseteq \mathcal{H}$. Then $T\perp_B A$ implies that $\widetilde{T}\perp_B \widetilde{A}$, for any extension $\widetilde{A}$ of $A$ on $\mathcal{K}$.
\end{Lemma}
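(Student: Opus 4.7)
The plan is to use the Magajna characterization of Birkhoff--James orthogonality in $\mathcal{B}(\mathcal{H})$ stated as Theorem~\ref{Bhatia-Semrl}, and observe that a norming sequence for $T$ inside $\mathcal{H}$ serves simultaneously as a norming sequence for the extension $\widetilde{T}$ inside the larger space $\mathcal{K}$.

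First, since $T \perp_B A$, Theorem~\ref{Bhatia-Semrl} supplies a sequence of unit vectors $(x_n) \subseteq \mathcal{H}$ with
\begin{equation*}
\lim_{n \to \infty} \|T x_n\| = \|T\| \quad \text{and} \quad \lim_{n \to \infty} \langle T x_n, A x_n \rangle = 0.
\end{equation*}
Viewing the same vectors inside $\mathcal{K}$ via the inclusion $\mathcal{H} \hookrightarrow \mathcal{K}$, they remain unit vectors, and because $\widetilde{T}$ and $\widetilde{A}$ extend $T$ and $A$ respectively, we have $\widetilde{T} x_n = T x_n$ and $\widetilde{A} x_n = A x_n$ for every $n$.

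The norm-preservation hypothesis $\|\widetilde{T}\| = \|T\|$ is then exactly what is needed to upgrade $(x_n)$ to a norming sequence for $\widetilde{T}$: indeed
\begin{equation*}
\lim_{n \to \infty} \|\widetilde{T} x_n\| = \lim_{n \to \infty} \|T x_n\| = \|T\| = \|\widetilde{T}\|,
\end{equation*}
while simultaneously
\begin{equation*}
\lim_{n \to \infty} \langle \widetilde{T} x_n, \widetilde{A} x_n \rangle = \lim_{n \to \infty} \langle T x_n, A x_n \rangle = 0.
\end{equation*}
Applying the infinite-dimensional part of Theorem~\ref{Bhatia-Semrl} in the opposite direction to the pair $(\widetilde{T}, \widetilde{A}) \in \mathcal{B}(\mathcal{K})$ then yields $\widetilde{T} \perp_B \widetilde{A}$.

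There is no real obstacle here; the only thing to flag is the role of the norm-preservation assumption. Without $\|\widetilde{T}\| = \|T\|$, one could only conclude $\limsup \|\widetilde{T} x_n\| \le \|\widetilde{T}\|$ and possibly a strict inequality, so the sequence $(x_n)$ might fail to be a norming sequence for $\widetilde{T}$ and the Magajna criterion would not apply. Also, although the statement is about general Hilbert spaces, one does not need to split into the finite- and infinite-dimensional cases separately, since in finite dimensions one may simply pass to a convergent subsequence of $(x_n)$ and obtain a maximizing unit vector for which the inner product $\langle \widetilde{T} x_0, \widetilde{A} x_0 \rangle = 0$.
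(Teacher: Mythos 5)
Your argument is correct and coincides with the paper's own proof: both take a norming sequence $(x_n)\subseteq\mathcal{H}$ for $T$ witnessing $T\perp_B A$ via Theorem~\ref{Bhatia-Semrl}, observe that norm preservation makes $(x_n)$ a norming sequence for $\widetilde{T}$ while the inner products are unchanged, and apply the same characterization in $\mathcal{B}(\mathcal{K})$. The closing remarks on the necessity of the norm-preservation hypothesis and on the finite-dimensional case are accurate but not needed.
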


\begin{proof}
Since $T\perp_B A$, there exists a norming sequence $(x_n)$ of $T$ such that
\[\lim_{n \rightarrow \infty } \langle Tx_n, Ax_n\rangle_\mathcal{H} =0.\] Evidently, $(x_n)$ is a norming sequence of $\widetilde{T}$, as
\[\lim_{n\rightarrow \infty }\left\|\widetilde{T}x_n\right\|=\lim_{n\rightarrow \infty }\left\|Tx_n\right\|=\|T\|=\left\|\widetilde{T}\right\|.\]
Also, 
\[\lim_{n\rightarrow \infty } \left \langle \widetilde{T}x_n, \widetilde{A}x_n \right\rangle_\mathcal{K}=\lim_{n\rightarrow \infty } \langle Tx_n, Ax_n\rangle_\mathcal{H} =0.\]
Consequently, $\widetilde{T}\perp_B \widetilde{A}$.
\end{proof}

The following corollary is an easy application of the above lemma.

\begin{cor}
Let $V_1$ and $V_2$ be two isometries on a Hilbert space $\mathcal{H}$ with $V_1\perp_B V_2$. Then $U_1\perp_B U_2$, for every unitary extensions $U_1$ of $V_1$ and $U_2$ of $V_2$ on some Hilbert space $\mathcal{K}\supseteq \mathcal{H}$.
\end{cor}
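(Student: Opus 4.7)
The plan is to derive this corollary as a one-line consequence of Lemma~\ref{Orthogonality of extensions}. The only observation required is that any unitary extension of an isometry is automatically a norm-preserving extension, so the hypotheses of the previous lemma are satisfied for free.

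First, I would note that since $V_1$ is an isometry on the (non-trivial) Hilbert space $\mathcal{H}$, we have $\|V_1\|=1$. Since $U_1$ is a unitary operator on $\mathcal{K}$, we also have $\|U_1\|=1$. Therefore $\|U_1\|=\|V_1\|$, i.e., $U_1$ is a norm-preserving extension of $V_1$ to $\mathcal{K}$. The operator $U_2$ is, in particular, an extension of $V_2$ to $\mathcal{K}$; its unitarity plays no role in what follows, only the fact that $U_2|_{\mathcal{H}} = V_2$.

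Finally, applying Lemma~\ref{Orthogonality of extensions} with $T := V_1$, $A := V_2$, $\widetilde{T} := U_1$, and $\widetilde{A} := U_2$, the hypothesis $V_1\perp_B V_2$ yields $U_1\perp_B U_2$, as required. There is essentially no obstacle here; the only conceptual point worth highlighting is that the norm-preservation condition of Lemma~\ref{Orthogonality of extensions} is automatic whenever one extends an isometry by a unitary, so a separate verification is unnecessary.
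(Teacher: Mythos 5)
Your proposal is correct and is exactly the argument the paper intends: the corollary is stated as an immediate application of Lemma \ref{Orthogonality of extensions}, with the key (and only) observation being that a unitary extension of an isometry is automatically norm-preserving since both operators have norm $1$. Nothing further is needed.
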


Given any two members $T$ and $A$ of $\mathcal{B}(\mathcal{H})$, $T\perp_B A$ does not necessarily imply $T^k\perp_B A$, for all natural numbers $k$. However, the implication is true when $T$ is a self-adjoint operator. We establish this fact using the following lemma and few corollaries of it. This in particular facilitates a nice orthogonality relation between $D_T$ and $T$, whenever $\|D_T\|=1$.

\begin{Lemma}\label{Lemma about self-adjoint}
Let $\mathcal{H}$ be a Hilbert space and let $T$ be a self-adjoint operator on $\mathcal{H}$. Then
\begin{itemize}
    \item[(i)] for any norming sequence $(x_n)$ of $T^2$, ${\displaystyle \lim_{n\rightarrow \infty} \|T^{2k}x_n-\|T\|^{2k}x_n\|=0}$, for all integers $k\geq 0$;
    
    \item[(ii)] for any norming sequence $(x_n)$ of $T$, ${\displaystyle \lim_{n\to \infty}\|T^{2k+1}x_n-\|T\|^{2k}Tx_n\|=0}$, for all integers $k\geq 0.$
\end{itemize}
\end{Lemma}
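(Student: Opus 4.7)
The plan is to prove (i) by induction on $k$, then deduce (ii) by observing that every norming sequence of $T$ is automatically a norming sequence of $T^2$, and finally applying (i) with one extra application of $T$.

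For (i), the cases $k=0$ is trivial. For $k=1$, WLOG take $(x_n)$ to be unit vectors. The starting point is the identity
\[
\|T^2 x_n - \|T\|^2 x_n\|^2 = \|T^2 x_n\|^2 - 2\|T\|^2 \operatorname{Re}\langle T^2 x_n, x_n\rangle + \|T\|^4,
\]
where self-adjointness gives $\langle T^2 x_n, x_n\rangle = \|T x_n\|^2$. The key observation is that the norming hypothesis on $T^2$ forces $\|Tx_n\| \to \|T\|$ as well: from $\|T^2 x_n\| \leq \|T\|\,\|Tx_n\|$ we get $\|Tx_n\| \geq \|T^2 x_n\|/\|T\| \to \|T\|$, while $\|Tx_n\| \leq \|T\|$ holds automatically. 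Plugging $\|Tx_n\|^2 \to \|T\|^2$ and $\|T^2 x_n\|^2 \to \|T\|^4$ into the identity yields $0$ in the limit.

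For the inductive step, I would write
\[
T^{2(k+1)}x_n - \|T\|^{2(k+1)} x_n = T^2\bigl(T^{2k}x_n - \|T\|^{2k} x_n\bigr) + \|T\|^{2k}\bigl(T^2 x_n - \|T\|^2 x_n\bigr),
\]
and then the triangle inequality bounds the norm by $\|T\|^2 \|T^{2k}x_n - \|T\|^{2k}x_n\| + \|T\|^{2k}\|T^2 x_n - \|T\|^2 x_n\|$, both of which vanish in the limit by the inductive hypothesis and the $k=1$ case.

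For (ii), let $(x_n)$ be a norming sequence of $T$ (unit vectors, $\|Tx_n\| \to \|T\|$). The crucial step is to show $(x_n)$ is also a norming sequence of $T^2$: the upper bound $\|T^2 x_n\| \leq \|T\|\,\|Tx_n\| \to \|T\|^2$ is immediate, and the lower bound follows from the self-adjoint identity $\|Tx_n\|^2 = \langle T^2 x_n, x_n\rangle \leq \|T^2 x_n\|$, giving $\|T^2 x_n\| \geq \|Tx_n\|^2 \to \|T\|^2 = \|T^2\|$. Once this is in hand, part (i) yields $\|T^{2k}x_n - \|T\|^{2k} x_n\| \to 0$, and then applying $T$ to both sides and using $\|T \cdot\| \leq \|T\|\,\|\cdot\|$ gives
\[
\|T^{2k+1}x_n - \|T\|^{2k} T x_n\| = \|T(T^{2k}x_n - \|T\|^{2k}x_n)\| \leq \|T\|\,\|T^{2k}x_n - \|T\|^{2k}x_n\| \to 0,
\]
completing (ii).

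The only mildly subtle step is the double inequality establishing $\|Tx_n\|^2 \leq \|T^2 x_n\| \leq \|T\|\,\|Tx_n\|$ (resp.\ $\|T^2 x_n\| \leq \|T\|^2$), which pins down the behavior of $\|Tx_n\|$ and $\|T^2 x_n\|$ from whichever power is assumed to be norming. Everything else is induction and the triangle inequality, so I do not expect a serious obstacle; the self-adjointness is used only in the identity $\langle T^{2j} x_n, x_n\rangle = \|T^j x_n\|^2$, which is what links consecutive powers.
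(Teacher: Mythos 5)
Your proposal is correct and follows essentially the same route as the paper: induction for (i) with the $k=1$ case handled via the expansion of $\|T^2x_n-\|T\|^2x_n\|^2$ and the observation that norming for $T^2$ forces $\|Tx_n\|\to\|T\|$, and for (ii) the double inequality $\|Tx_n\|^2\le\|T^2x_n\|\le\|T\|\,\|Tx_n\|$ to pass from a norming sequence of $T$ to one of $T^2$ before applying (i). The only cosmetic difference is which factors you pull out in the telescoping step of the induction.
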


\begin{proof}
(i) Observe that $(x_n)$ is a norming sequence of $T$, since $\|T\|^2=\|T^2\|$ and
\[
\|T^2x_n\|\leq \|T\|\|Tx_n\| \quad \text{ for all } \;\; n\in \mathbb{N}.
\]
This is because, if every subsequence of a bounded sequence converges to a limit then so does the original sequence. We now apply induction to prove our assertion. The claim is trivially true for $k=0$. The claim is also true for $k=1$, since
\[
\|\|T\|^2x_n-T^2x_n\|^2  = \|T\|^4+\|T^2x_n\|^2-2\|T\|^2\|Tx_n\|^2\to 0 \quad \mathrm{as} \; \; ~n\to \infty.
\]
Suppose that the claim is true for $k=m$. Now for $k=m+1$, we have
\begin{align*}
\|T^{2m+2}x_n-\|T\|^{2m+2}x_n\| & = \|T^{2m+2}x_n-\|T\|^2T^{2m}x_n+\|T\|^2T^{2m}x_n-\|T\|^{2m+2}x_n\|\\
& \leq \|T^{2m}(T^2-\|T\|^2)x_n+\|T\|^2(T^{2m}-\|T\|^{2m})x_n\|\\
& \leq \|T^{2m}\|\|T^2x_n-\|T\|^2x_n\|+\|T\|^2\|T^{2m}x_n-\|T\|^{2m}x_n\|.
\end{align*}
By inductive hypothesis we have 
\[
\lim_{n\rightarrow \infty }\|T^{2m}x_n-\|T\|^{2m}x_n\|=0.
\]
Consequently,
\[
\lim_{n\rightarrow \infty } \|T^{2m+2}x_n-\|T\|^{2m+2}x_n\|=0
\]
and the assertion is proved.

\bigskip

(ii)  For any norming sequence $(x_n)$ of $T$,
\begin{align*}
\|T\|^2-\|T^2x_n\| & \leq \|T\|^2-\langle T^2x_n,x_n \rangle \qquad [ \text{since } \,T^2 ~\mathrm{is~positive}]\\
& = \|T\|^2-\|Tx_n\|^2 \; \to 0 \quad \mathrm{as} \;\; ~n\to \infty.
\end{align*}
Thus, $(x_n)$ is a norming sequence of $T^2$. Note that for any integer $k\geq 0$
\[
\|T^{2k+1}x_n-\|T\|^{2k}Tx_n\|\leq \|T\|\|T^{2k}x_n-\|T\|^{2k}x_n\|.
\]
Since $(x_n)$ is also a norming sequence of $T^2$, by virtue of part (i)
\[\|T^{2k+1}x_n-\|T\|^{2k}Tx_n\|\leq \|T\|\|T^{2k}x_n-\|T\|^{2k}x_n\| \to 0 \quad \mathrm{as}\;\; ~n\to \infty.\]
This completes the proof.
\end{proof}

An important observation in this connection is the following corollary.

\begin{cor}\label{norming sequence}
Let $\mathcal{H}$ be a Hilbert space and let $T$ be a self-adjoint operator on $\mathcal{H}$. Then $\|T^k\|=\|T\|^k$ for all natural numbers $k$. Moreover, any norming sequence of $T$ is also a norming sequence of $T^k$ for all $k\in \mathbb N$.
\end{cor}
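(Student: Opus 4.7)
The plan is to read both claims off directly from Lemma \ref{Lemma about self-adjoint}, splitting $k$ into even and odd cases. First I would fix a norming sequence $(x_n)$ of $T$. The opening calculation in the proof of part (ii) of that lemma already shows that such an $(x_n)$ is simultaneously a norming sequence of $T^2$, so parts (i) and (ii) are both available to me on the same sequence.

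For $k=2m$ even, I would invoke part (i) on the norming sequence $(x_n)$ of $T^2$ to get
\[
\bigl|\,\|T^{2m}x_n\|-\|T\|^{2m}\,\bigr|\leq \|T^{2m}x_n-\|T\|^{2m}x_n\|\longrightarrow 0,
\]
so $\|T^{2m}x_n\|\to \|T\|^{2m}$. Submultiplicativity gives $\|T^{2m}\|\leq \|T\|^{2m}$, and the displayed limit forces $\|T^{2m}\|\geq \|T\|^{2m}$, hence equality. For $k=2m+1$ odd, I would use part (ii) in the same way:
\[
\bigl|\,\|T^{2m+1}x_n\|-\|T\|^{2m}\|Tx_n\|\,\bigr|\leq \|T^{2m+1}x_n-\|T\|^{2m}Tx_n\|\longrightarrow 0,
\]
and since $\|Tx_n\|\to \|T\|$, this yields $\|T^{2m+1}x_n\|\to \|T\|^{2m+1}$, and a second appeal to submultiplicativity pins down the equality $\|T^{2m+1}\|=\|T\|^{2m+1}$.

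The ``moreover'' statement then follows for free: in both parities I have proved $\|T^k x_n\|\to \|T\|^k$, and the norm identity just established equates this limit with $\|T^k\|$, so $(x_n)$ is a norming sequence of $T^k$. There is no real obstacle here, only the mild bookkeeping point that part (i) of the lemma is stated for norming sequences of $T^2$ rather than $T$; the bridge is precisely the observation, already contained in the proof of Lemma \ref{Lemma about self-adjoint}(ii), that positivity of $T^2$ together with $\|Tx_n\|^2=\langle T^2 x_n,x_n\rangle\to \|T\|^2$ promotes any $T$-norming sequence to a $T^2$-norming sequence. Once that bridge is crossed, the corollary is a one-line consequence of the lemma.
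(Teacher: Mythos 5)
Your proposal is correct and follows essentially the same route as the paper: both arguments feed a norming sequence of $T$ into Lemma \ref{Lemma about self-adjoint}, treat the even and odd powers separately via parts (i) and (ii), deduce $\|T^kx_n\|\to\|T\|^k$ from the reverse triangle inequality, and close the gap with submultiplicativity, using the observation from the proof of part (ii) that a norming sequence of $T$ is automatically one of $T^2$. No differences worth noting.
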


\begin{proof}
Consider any norming sequence $(x_n)$ of $T$. By virtue of Lemma \ref{Lemma about self-adjoint}
\[
\|T^{2k+1}x_n\|-\|T\|^{2k}\|Tx_n\|\leq \|T^{2k+1}x_n-\|T\|^{2k}Tx_n\|\to 0 \quad \mathrm{as} \; \; ~n\to \infty
\]
for all integers $k\geq 0$. Thus,
\[
\|T^{2k+1}\|\geq\lim_{n\to \infty}\|T^{2k+1}x_n\|=\lim_{n\to \infty}\|T\|^{2k}\|Tx_n\|=\|T\|^{2k+1}\geq \|T^{2k+1}\|.
\]
Therefore, the above inequality is an equality, and we have $\|T^{2k+1}\|=\|T\|^{2k+1}$. Again, it follows from Lemma \ref{Lemma about self-adjoint} that
$(x_n)$ is also a norming sequence of $T^2$. Also,
\[
\|T^{2k}x_n\|-\|T\|^{2k}\|x_n\|\leq \|T^{2k}x_n-\|T\|^{2k}x_n\|\to 0 \quad \mathrm{as} \; \; ~n\to \infty
\]
for all integers $k\geq 0$. Thus, ${\displaystyle \lim_{n\to \infty}\|T^{2k}x_n\|=\|T\|^{2k}}$ and by an analogous argument as above we have $ \|T^{2k}\|=\|T\|^{2k}$. Consequently, $\|T^k\|=\|T\|^k$ for all $k\in \mathbb{N}$.
Moreover, since the norming sequence $(x_n)$ of $T$ was chosen arbitrarily, it follows that every norming sequence of $T$ is also a norming sequence of $T^k$ for all integers $k\geq 0$.

\end{proof}

The following corollary is another application of Lemma \ref{Lemma about self-adjoint}.

\begin{cor}\label{Corollary about self-adjoint}
Let $\mathcal{H}$ be a Hilbert space and let $T$ be a self-adjoint operator on $\mathcal{H}$. Then the following hold:
\begin{itemize}
    \item[(i)] $T^2\perp_B A$ for any $A\in \mathcal{B}(\mathcal{H})$ implies that $T^{2k}\perp_B A$, for all integers $k\geq 0.$
    \item[(ii)] $T\perp_B A$ for any $A\in \mathcal{B}(\mathcal{H})$ implies that $T^{2k+1}\perp_B A$, for all integers $k\geq 0.$
\end{itemize}
\end{cor}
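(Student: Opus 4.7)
\medskip

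\noindent\textbf{Proof proposal for Corollary \ref{Corollary about self-adjoint}.} The plan is to use Theorem \ref{Bhatia-Semrl} to reformulate each hypothesis as the existence of a norming sequence witnessing the orthogonality, and then transport that witness from $T$ (or $T^2$) to $T^{2k+1}$ (or $T^{2k}$) by means of the asymptotic identities supplied by Lemma \ref{Lemma about self-adjoint} and Corollary \ref{norming sequence}. Throughout I may assume $T \ne 0$, since otherwise every conclusion is trivial.

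For part (i), I would start by invoking Theorem \ref{Bhatia-Semrl} to obtain a unit norming sequence $(x_n)$ of $T^2$ satisfying $\langle T^2 x_n, A x_n \rangle \to 0$. By Corollary \ref{norming sequence} we have $\|T^{2k}\| = \|T\|^{2k}$, and Lemma \ref{Lemma about self-adjoint}(i) gives
\[
\|T^{2k} x_n - \|T\|^{2k} x_n\| \to 0 \qquad (k \geq 0),
\]
which immediately yields $\|T^{2k} x_n\| \to \|T\|^{2k} = \|T^{2k}\|$, so $(x_n)$ is a norming sequence for $T^{2k}$. To establish $\langle T^{2k} x_n, A x_n \rangle \to 0$, I would split
\[
\langle T^{2k} x_n, A x_n \rangle = \|T\|^{2k}\langle x_n, A x_n \rangle + \langle T^{2k} x_n - \|T\|^{2k} x_n,\; A x_n \rangle.
\]
The second term is dominated by $\|T^{2k} x_n - \|T\|^{2k} x_n\| \cdot \|A\|$ and hence vanishes. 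For the first term, apply Lemma \ref{Lemma about self-adjoint}(i) with $k=1$: since $T^2 x_n - \|T\|^2 x_n \to 0$ and $\langle T^2 x_n, A x_n \rangle \to 0$, taking inner product with $A x_n$ forces $\|T\|^2 \langle x_n, A x_n \rangle \to 0$, so $\langle x_n, A x_n \rangle \to 0$ (using $\|T\| \ne 0$). Theorem \ref{Bhatia-Semrl} then concludes $T^{2k} \perp_B A$.

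For part (ii), the argument runs in parallel but using Lemma \ref{Lemma about self-adjoint}(ii). Starting from a norming sequence $(x_n)$ of $T$ with $\langle T x_n, A x_n \rangle \to 0$, I would use
\[
\|T^{2k+1} x_n - \|T\|^{2k} T x_n\| \to 0
\]
to show both that $\|T^{2k+1} x_n\| \to \|T\|^{2k+1} = \|T^{2k+1}\|$ (so $(x_n)$ is norming for $T^{2k+1}$) and that
\[
\langle T^{2k+1} x_n, A x_n \rangle = \|T\|^{2k}\langle T x_n, A x_n \rangle + \langle T^{2k+1} x_n - \|T\|^{2k} T x_n,\; A x_n \rangle \to 0.
\]
Another application of Theorem \ref{Bhatia-Semrl} gives $T^{2k+1} \perp_B A$.

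The proof is essentially bookkeeping; the only delicate point is the first term of the splitting in part (i), where I must extract $\langle x_n, A x_n \rangle \to 0$ from the hypothesis about $\langle T^2 x_n, A x_n \rangle$. Here the self-adjointness of $T$ (through Lemma \ref{Lemma about self-adjoint}(i)) does the work by letting us replace $T^2 x_n$ by $\|T\|^2 x_n$ up to a vanishing error, which is the main conceptual step of the argument.
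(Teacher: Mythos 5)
Your proposal is correct and follows essentially the same route as the paper: both obtain the witnessing norming sequence from Theorem \ref{Bhatia-Semrl}, use Lemma \ref{Lemma about self-adjoint} to replace $T^{2k}x_n$ by $\|T\|^{2k}x_n$ (respectively $T^{2k+1}x_n$ by $\|T\|^{2k}Tx_n$) up to a vanishing error, extract $\langle x_n, Ax_n\rangle\to 0$ from the $k=1$ case exactly as you describe, and invoke Corollary \ref{norming sequence} to confirm the sequence is norming for the higher powers. Your explicit remark that $T\neq 0$ is needed to divide by $\|T\|^2$ is a small point the paper leaves implicit, but otherwise the two arguments coincide.
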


\begin{proof}
(i) Since $T^2\perp_B A$, we can find a norming sequence $(x_n)$ of $T^2$ such that 
\[
\lim_{n\rightarrow \infty } \langle T^2x_n, Ax_n \rangle = 0.
\]
By Lemma \ref{Lemma about self-adjoint}, we have
\begin{align*}
|\langle T^{2}x_n, Ax_n \rangle - \langle \|T\|^{2}x_n, Ax_n \rangle | & = |\langle T^{2}x_n-\|T\|^{2}x_n, Ax_n \rangle|\\
& \leq \|T^{2}x_n-\|T\|^{2}x_n\|\|A\| \to 0 \quad \mathrm{as}~n\to \infty.   
\end{align*}
This shows that ${ \displaystyle \lim_{n\rightarrow \infty }  \langle \|T\|^{2}x_n, Ax_n \rangle=0 }$ and hence ${ \displaystyle \lim_{n\rightarrow \infty } \langle x_n, Ax_n \rangle =0}$.
Again, by Lemma \ref{Lemma about self-adjoint} we have
\begin{align*}
|\langle T^{2k}x_n, Ax_n \rangle - \langle \|T\|^{2k}x_n, Ax_n \rangle |& = |\langle T^{2k}x_n-\|T\|^{2k}x_n, Ax_n \rangle|\\
& \leq \|T^{2k}x_n-\|T\|^{2k}x_n\|\|A\|\to 0   \quad \mathrm{as} \; \; ~n\to \infty.   
\end{align*}
Since ${0= \displaystyle \lim_{n\to \infty}\langle x_n, Ax_n \rangle=\lim_{n\to \infty} \langle \|T\|^{2k}x_n, Ax_n \rangle}$,  we have that
$
{\displaystyle \lim_{n\rightarrow \infty} \langle T^{2k}x_n, Ax_n \rangle =0}$ for all $k \in \mathbb N$.
Moreover, it follows from Corollary \ref{norming sequence} that $(x_n)$ is also a norming sequence of $T^{2k}$ for each integer $k\geq 0$. Therefore, $T^{2k}\perp_B A$, for all integers $k\geq 0$.

\medskip

(ii) Since $T\perp_B A$, there exists a norming sequence $(x_n)$ of $T$ such that 
\[
\lim_{n\to \infty}\langle Tx_n, Ax_n \rangle = 0.
\]
By Lemma \ref{Lemma about self-adjoint}
\begin{align*}
|\langle T^{2k+1}x_n, Ax_n \rangle - \langle \|T\|^{2k}Tx_n, Ax_n \rangle | & = |\langle T^{2k+1}x_n-\|T\|^{2k}Tx_n, Ax_n \rangle|\\
& \leq \|T^{2k+1}x_n-\|T\|^{2k}Tx_n\|\|A\|\to 0   \quad \mathrm{as} \; \; ~n\to \infty.   
\end{align*}
Since ${\displaystyle 0=\lim_{n\to \infty} \langle Tx_n, Ax_n \rangle =\lim_{n\to \infty } \langle \|T\|^{2k}Tx_n, Ax_n \rangle }$, we have
$
{\displaystyle \lim_{n\to \infty} \langle T^{2k+1}x_n, Ax_n \rangle =0.}
$
Again, by Corollary \ref{norming sequence}, $(x_n)$ is a norming sequence of $T^{2k+1}$ for each integer $k\geq 0$. Thus, $T^{2k+1}\perp_B A$ for each integer $k\geq 0$.
\end{proof}

Finally, using the forgoing corollaries we have the following orthogonality relation between $D_T$ and $T$.

\begin{cor}\label{Orthogonality of T and DT}
Let $\mathcal{H}$ be a Hilbert space and $T$ be a contraction on $\mathcal{H}$ such that $\|D_T\|=1$. Then $D_T^k\perp_B T^j$ for all integers $k,j>0$.
\end{cor}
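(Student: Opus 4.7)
The plan is to reduce the statement to two base cases, namely $D_T \perp_B T^j$ and $D_T^2 \perp_B T^j$ for every $j \geq 1$, and then to feed these into Corollary \ref{Corollary about self-adjoint}. The reason this reduction is available is that $D_T = (I - T^*T)^{1/2}$ is positive, hence self-adjoint, so both parts (i) and (ii) of Corollary \ref{Corollary about self-adjoint} apply with $D_T$ in the role of $T$ and $A = T^j$. If I can show $D_T \perp_B T^j$, part (ii) upgrades this to $D_T^{2k+1} \perp_B T^j$ for every $k \geq 0$; if I can show $D_T^2 \perp_B T^j$, part (i) upgrades this to $D_T^{2k} \perp_B T^j$ for every $k \geq 1$. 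Together these cover all positive integer powers of $D_T$.

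The central observation is that norming sequences of $D_T$ annihilate $T$ in the limit. Pick any norming sequence $(x_n) \subset S_{\mathcal H}$ for $D_T$, which exists since $\|D_T\| = 1$. Because $D_T^2 = I - T^*T$ and $\|x_n\| = 1$, we have
\[
\|D_T x_n\|^2 \;=\; \langle D_T^2 x_n, x_n \rangle \;=\; 1 - \|Tx_n\|^2 \;\longrightarrow\; 1,
\]
so $\|Tx_n\| \to 0$. Since $T$ is a contraction, this forces $\|T^j x_n\| \leq \|Tx_n\| \to 0$ for every $j \geq 1$.

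With this in hand, the two base cases follow immediately from Cauchy--Schwarz and the Bhatia--\v{S}emrl/Magajna criterion (Theorem \ref{Bhatia-Semrl}). For the odd case, $|\langle D_T x_n, T^j x_n\rangle| \leq \|D_T x_n\|\,\|T^j x_n\| \to 0$, and $(x_n)$ is by construction a norming sequence of $D_T$, so $D_T \perp_B T^j$. For the even case, Corollary \ref{norming sequence} guarantees that $(x_n)$ is also a norming sequence of $D_T^2$, and the same estimate $|\langle D_T^2 x_n, T^j x_n\rangle| \leq \|D_T^2 x_n\|\,\|T^j x_n\| \to 0$ yields $D_T^2 \perp_B T^j$. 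Applying Corollary \ref{Corollary about self-adjoint} then finishes the proof.

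There is essentially no obstacle beyond spotting the defect identity $\|D_T x_n\|^2 + \|T x_n\|^2 = 1$; once that is in place, the inductive structure already set up in the preceding corollaries does all the remaining work.
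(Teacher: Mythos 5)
Your proposal is correct, and it rests on exactly the same key observation as the paper: along any norming sequence $(x_n)$ of $D_T$ the defect identity $\|D_Tx_n\|^2=1-\|Tx_n\|^2$ forces $\|Tx_n\|\to 0$, hence $\|T^jx_n\|\to 0$, and Cauchy--Schwarz kills the inner products. The only structural difference is in how you pass from the base cases to general $k$: you prove $D_T\perp_B T^j$ and $D_T^2\perp_B T^j$ and then invoke the odd/even propagation machinery of Corollary \ref{Corollary about self-adjoint}, whereas the paper observes that the single estimate
\[
|\langle D_T^kx_n,T^jx_n\rangle|\leq \|D_T^kx_n\|\,\|T^jx_n\|\leq \|T^jx_n\|\to 0
\]
already holds uniformly in $k$ for the \emph{same} sequence $(x_n)$, and then needs only Corollary \ref{norming sequence} to certify that $(x_n)$ is norming for every $D_T^k$. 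Your detour is harmless and fully justified (both parts of Corollary \ref{Corollary about self-adjoint} apply since $D_T$ is self-adjoint), but it is strictly more machinery than the statement requires; the paper's direct route makes it clearer that one fixed sequence witnesses all the orthogonality relations simultaneously.
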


\begin{proof}
Let $(x_n)$ be a norming sequence of $D_T$. Then 
\[
1=\lim_{n\to \infty} \|D_Tx_n\|^2=\lim_{n\to \infty}\langle D_T^2x_n,x_n\rangle=\lim_{n\to \infty }(1-\|Tx_n\|^2).
\]
Consequently, ${ \displaystyle \lim_{n\to \infty}\|Tx_n\|^2\ = 0 }$. Now, for positive integers $k$ and $j$
\[
|\langle D_T^kx_n,T^jx_n\rangle|\leq \|D_T^kx_n\|\|T^jx_n\|\leq \|T^jx_n\|\leq \|Tx_n\|\to 0 \quad \mathrm{as} \; \; ~n\to \infty.
\]
Since $D_T$ is a self-adjoint operator, $(x_n)_{n\in \mathbb{N}}$ is a norming sequence of $D_T^k$, for all positive integers $k$ (see Corollary \ref{norming sequence}). Thus, $D_T^k\perp_B T^j$ for all integers $k,j>0$. This completes the proof.
\end{proof}


\section{Birkhoff-James orthogonality of Sch\"{a}ffer's unitary dilations} \label{sec:03}

\vspace{0.4cm}

\noindent Recall that a unitary (or isometry) $U$ acting on a Hilbert space $\mathcal K$ is said to be a unitary (or isometric) dilation of a contraction $T$ on a Hilbert space $\mathcal{H}$ if $\mathcal H \subseteq \mathcal K$ and 
\[
T^n=P_{\mathcal{H}}U^n|_\mathcal{H},\qquad n\in \mathbb N \cup \{0\},
\]
where $P_\mathcal{H}$ is the orthogonal projection of $\mathcal{K}$ onto $\mathcal{H}$. In \cite{Schaffer}, Sch\"{a}ffer found the following explicit unitary dilation $\widetilde{U_T}$ on the space $\mathcal K = \oplus_{-\infty}^{\infty} \mathcal H_i$, where $\mathcal H_i =\mathcal H$ for every $i$, for a contraction $T$ on a Hilbert space $\mathcal H$. This is widely known as \textit{Sch\"{a}ffer unitary dilation} of a contraction.

\begin{equation} \label{eqn:uni-dil}
	\widetilde{U_T} =\left[
\begin{array}{ c c c c|c|c c c c}
\bm{\ddots}&\vdots &\vdots&\vdots   &\vdots  &\vdots& \vdots&\vdots&\vdots\\
\cdots&0&\mathrm{I}&0  &0&  0&0&0&\cdots\\
\cdots&0&0&\mathrm{I}  &0&  0&0&0&\cdots\\
\cdots&0&0&0  &\mathrm{D_T}&  \mathrm{-T^*}&0&0&\cdots\\ \hline

\cdots&0&0&0   &\mathrm{T}&   \mathrm{D_{T^*}}&0&0&\cdots\\ \hline

\cdots&0&0&0   &0&  0& \mathrm{I}&0&\cdots\\
\cdots&0&0&0   &0&  0&0&\mathrm{I}&\cdots\\
\cdots&0&0&0  &0&   0& 0&0&\cdots\\
\vdots&\vdots&\vdots&\vdots&\vdots&\vdots&\vdots&\vdots&\bm{\ddots}\\
\end{array} \right].
	\end{equation}
We will denote by $\widetilde{U_T}(i,j)$ the $(i,j)^{th}$ entry of the block matrix of $\widetilde{U_T}$. The operators $D_T$ and $D_{T^*}$ are the defect operators of $T$ and $T^*$ respectively, defined as $(I-T^*T)^\frac{1}{2}$ and $(I-TT^*)^\frac{1}{2}$. The $2 \times 2$ block
\begin{equation}\label{Halmos block}
\mathsf{T_U}=\begin{bmatrix}
D_T & -T^*\\
T & D_{T^*}
\end{bmatrix}    
\end{equation}
in the block matrix of $\widetilde{U_T}$ is also a unitary operator and is known as \textit{Halmos block} associated with $\widetilde{U_T}$.\\

In this Section, we find necessary and sufficient conditions such that the Sch\"{a}ffer unitary dilations of two contractions are orthogonal. Indeed, the Sch\"{a}ffer unitary dilations of a pair of orthogonal contractions may not always be orthogonal. This is a main result of this Section. In this connection we would like to mention that for any bounded linear operator $T$ on a Hilbert space $\mathcal{H}$, the \textit{maximal numerical range} of $T$ is defined by
\begin{equation} \label{eqn:sec3-03}
\mathcal{W}(T):=\{\lambda\in \mathbb{C}:~\lim_{n\to \infty}\langle Tx_n, x_n\rangle\to \lambda,~\|x_n\|=1~ \& \; \lim_{n\to \infty}\|Tx_n\|= \|T\|\}.
\end{equation}
It is well-known \cite{Stampfli} that the maximal numerical range of an operator is non-empty, closed, convex and contained in the closure of its numerical range.

\begin{theorem}\label{Orthogonality of Halmos Blocks}
Let $T, A$ be any two contractions acting on a Hilbert space $\mathcal{H}$. Let $\widetilde{U_T}$ and $\widetilde{U_A}$ be the Sch\"{a}ffer unitary dilations as in Equation-$($\ref{eqn:uni-dil}$)$ of $T$ and $A$ respectively on $\mathcal{K}=\bigoplus_{-\infty}^\infty \mathcal{H}$. Also, let $\mathsf{T}_U$, $\mathsf{A}_U$ be the $2\times 2$ Halmos-blocks associated with $\widetilde{U_T}$ and $\widetilde{U_A}$ respectively as in Equation-$(\ref{Halmos block})$. Then the following are equivalent:
\begin{itemize}
\item[(i)] $\widetilde{U_T}\perp_B \widetilde{U_A}$ ;

\item[(ii)] $\mathsf{T}_U \perp_B \mathsf{A}_U$ ;

\item[(iii)] $\mathcal{W}(\mathsf{A}_U^* \mathsf{T}_U)$ contains a negative real number.
\end{itemize}
\end{theorem}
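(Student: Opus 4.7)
My plan is to exploit the fact that $\widetilde{U_T}$ and $\widetilde{U_A}$ agree on a large common ``shift'' subspace of $\mathcal{K}$ and differ only through their Halmos blocks. Writing $\mathcal{M}:=\mathcal{H}_{0}\oplus\mathcal{H}_{1}$ for the input window of the Halmos blocks and $\mathcal{N}:=\mathcal{H}_{-1}\oplus\mathcal{H}_{0}$ for their output window (both read off from Equation~(\ref{eqn:uni-dil})), outside $\mathcal{M}$ both dilations act as a common shift isometry $S:\mathcal{K}\ominus\mathcal{M}\to\mathcal{K}\ominus\mathcal{N}$, while on $\mathcal{M}$ they act as $\mathsf{T}_U$ and $\mathsf{A}_U$ respectively into $\mathcal{N}$.

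The first step is the block identity: for any $y=y_{c}+y_{w}\in(\mathcal{K}\ominus\mathcal{M})\oplus\mathcal{M}$,
\[
(\widetilde{U_T}+\lambda\widetilde{U_A})y \;=\; (1+\lambda)\,S y_{c}\,+\,(\mathsf{T}_U+\lambda\mathsf{A}_U)y_{w},
\]
and the two summands lie in the orthogonal subspaces $\mathcal{K}\ominus\mathcal{N}$ and $\mathcal{N}$ of $\mathcal{K}$; optimizing over the unit-norm split $\|y_c\|^2+\|y_w\|^2=1$ gives
\[
\bigl\|\widetilde{U_T}+\lambda\widetilde{U_A}\bigr\|^{2} \;=\; \max\Bigl(|1+\lambda|^{2},\ \bigl\|\mathsf{T}_U+\lambda\mathsf{A}_U\bigr\|^{2}\Bigr).
\]
By exactly the same decomposition the shift parts cancel in the product $\widetilde{U_A}^{*}\widetilde{U_T}$ (since $S^{*}S=I$), leaving the block-diagonal identity
\[
\widetilde{U_A}^{*}\widetilde{U_T} \;=\; I_{\mathcal{K}\ominus\mathcal{M}}\,\oplus\,\mathsf{A}_U^{*}\mathsf{T}_U.
\]

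Since $\widetilde{U_T},\widetilde{U_A},\mathsf{T}_U,\mathsf{A}_U$ are all unitaries, every unit vector is norming for each of them, and Theorem~\ref{Bhatia-Semrl} reduces Birkhoff-James orthogonality to $0$ lying in the max numerical range of the corresponding product:
\[
\widetilde{U_T}\perp_B\widetilde{U_A} \iff 0\in\mathcal{W}(\widetilde{U_A}^{*}\widetilde{U_T}), \qquad \mathsf{T}_U\perp_B\mathsf{A}_U \iff 0\in\mathcal{W}(\mathsf{A}_U^{*}\mathsf{T}_U).
\]
The block identity above yields
\[
\mathcal{W}(\widetilde{U_A}^{*}\widetilde{U_T})\;=\;\mathrm{conv}\bigl(\{1\}\cup\mathcal{W}(\mathsf{A}_U^{*}\mathsf{T}_U)\bigr),
\]
and the only way to have $0=s\cdot 1+(1-s)\mu$ with $s\in[0,1]$ and $\mu\in\overline{\mathbb{D}}$ is $\mu=-s/(1-s)\le0$; this delivers (i)$\iff$(iii). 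The analogous passage (ii)$\iff$(iii) uses the convexity and closedness of $\mathcal{W}(\mathsf{A}_U^{*}\mathsf{T}_U)$ inside the closed unit disc together with the characterisation above.

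The main obstacle is the block identity $\widetilde{U_A}^{*}\widetilde{U_T}=I\oplus\mathsf{A}_U^{*}\mathsf{T}_U$: it requires careful bookkeeping to verify that the two cyclic shift parts of $\widetilde{U_T}$ and $\widetilde{U_A}$ cancel exactly under adjoint multiplication while the only surviving coupling on the central two blocks is precisely the Halmos product $\mathsf{A}_U^{*}\mathsf{T}_U$. Once this identity is established, the numerical-range rewriting and the convex-hull argument extracting a non-positive (negative) real from $\mathcal{W}(\mathsf{A}_U^{*}\mathsf{T}_U)$ are routine.
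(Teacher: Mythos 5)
Your route is genuinely different from the paper's. The paper argues entirely at the level of sequences of unit vectors: for the forward direction it splits $\langle\widetilde{U_T}\mathbf{h}_k,\widetilde{U_A}\mathbf{h}_k\rangle$ into a shift part $M_k$ and a Halmos part $N_k$, passes to convergent subsequences and renormalises the window; for (ii)$\Rightarrow$(i) and (iii)$\Rightarrow$(i) it builds explicit unit vectors such as $\sqrt{-\gamma/(1-\gamma)}\,\mathbf{h}_k+(1-\gamma)^{-1/2}\mathbf{h}'_k$. You package all of this into the single operator identity $\widetilde{U_A}^{*}\widetilde{U_T}=I_{\mathcal{K}\ominus\mathcal{M}}\oplus\mathsf{A}_U^{*}\mathsf{T}_U$, which is correct (both dilations restrict to the same unitary shift from $\mathcal{K}\ominus\mathcal{M}$ onto $\mathcal{K}\ominus\mathcal{N}$ and to $\mathsf{T}_U$, $\mathsf{A}_U$ from $\mathcal{M}$ onto $\mathcal{N}$), combined with the facts that for unitaries Birkhoff--James orthogonality amounts to $0$ lying in the closure of the numerical range of the product, and that the numerical range of a direct sum is the convex hull of the two numerical ranges. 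Your coefficients $s$, $1-s$ are exactly the paper's $\alpha$, $1-\alpha$ and $-\gamma/(1-\gamma)$, $1/(1-\gamma)$; the direct-sum formulation makes the arithmetic transparent and handles both directions simultaneously, which is a real gain in clarity.

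There is, however, a gap in your endgame. What your computation actually establishes is that (i) holds if and only if $\mathcal{W}(\mathsf{A}_U^{*}\mathsf{T}_U)$ contains a \emph{non-positive} real $\mu=-s/(1-s)$, i.e.\ the disjunction ``(ii) or (iii)''; this is precisely Corollary \ref{Corollary to Halmos Blocks}. The case $\mu=0$ is (ii) and the case $\mu<0$ is (iii), so you cannot simply declare ``this delivers (i)$\iff$(iii)'', and the deferred ``analogous passage (ii)$\iff$(iii)'' is not routine --- it is false. For $T=I_{\mathcal{H}}$ and $A=-I_{\mathcal{H}}$ all defect operators vanish, $\mathsf{A}_U=-\mathsf{T}_U$, hence $\mathsf{A}_U^{*}\mathsf{T}_U=-I$ and $\mathcal{W}(\mathsf{A}_U^{*}\mathsf{T}_U)=\{-1\}$: this set contains a negative real but not $0$, so (iii) holds while (ii) fails (indeed $\|\mathsf{T}_U+\lambda\mathsf{A}_U\|=|1-\lambda|<1$ at $\lambda=1/2$). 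Convexity of $\mathcal{W}(\mathsf{A}_U^{*}\mathsf{T}_U)$ cannot bridge this. To be fair, the paper's own argument has the same shape --- its forward step reads ``whenever $\alpha=0$ we reach (ii) and for $\alpha\neq 0$ we have (iii)'', which likewise yields only (i)$\Rightarrow$((ii) or (iii)) --- so your proof establishes exactly as much as the paper's does, namely the statement of Corollary \ref{Corollary to Halmos Blocks}; but the sentence claiming the separate equivalences (i)$\iff$(iii) and (ii)$\iff$(iii) should be replaced by the disjunctive formulation.
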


\begin{proof}

$(i) \Rightarrow (ii)\;\; \& \;\; (i) \Rightarrow (iii)$. Suppose that $\widetilde{U_T} \perp_B \widetilde{U_A}$. Then there exists a sequence of unit vectors $(\mathbf{h}_k)_{k\in \mathbb{N}}$ in $\mathcal{K}$ such that ${\displaystyle \lim_{k \to \infty }\left\langle \widetilde{U_T} \mathbf{h}_k, \widetilde{U_A} \mathbf{h}_k\right\rangle_\mathcal{K} = 0}$. Let 
\[\mathbf{h}_k=\left(x^{(k)}_n\right)_{n\in \mathbb{Z}}, \qquad k\in \mathbb{N}.\]
Therefore,
\[
\lim_{k\to \infty} \left\langle \widetilde{U_T} \mathbf{h}_k, \widetilde{U_A} \mathbf{h}_k\right\rangle_\mathcal{K}=\lim_{k \to \infty } (M_k+N_k) = 0,
\]
where
\[
M_k = \sum\limits_{n\neq 0,1}\left\|x_n^{(k)}\right\|^2 \quad \mathrm{and} \quad N_k = \left\langle \mathsf{T}_U\left(x_0^{(k)},x_1^{(k)}\right), \mathsf{A}_U \left(x_0^{(k)},x_1^{(k)}\right)\right\rangle_{\mathcal{H}\oplus \mathcal{H}}, \qquad k\in \mathbb{N}.\]
Without loss of generality we can assume that $(M_k)_{k\in \mathbb{N}}$ and $(N_k)_{k\in \mathbb{N}}$ are convergent, otherwise we will choose suitable subsequences of them. Let
\[
\lim_{k\to \infty} M_k = \alpha \quad \mathrm{and} \quad \lim_{k\to \infty} N_k = \beta.
\]
If $\alpha = 1$, then
\[
\lim_{k\to \infty} \left(\left\|x_0^{(k)}\right\|^2+\left\|x_1^{(k)}\right\|^2\right) = 1 - \alpha = 0.
\]
Consequently we have ${ \displaystyle \lim_{k\to \infty } N_k = 0}$ as
\[
0\leq N_k \leq \left(\left\|x_0^{(k)}\right\|^2+\left\|x_1^{(k)}\right\|^2\right), \qquad k\in \mathbb{N}.
\]
This is contradiction to the fact that ${\displaystyle \lim_{k\to \infty}(M_k+N_k) = 0}$. Therefore, $0\leq \alpha < 1$ and
\[
\lim_{k\to \infty} \left(\left\|x_0^{(k)}\right\|^2+\left\|x_1^{(k)}\right\|^2\right) = 1-\alpha > 0.
\]
Without loss of generality let $\left(\left\|x_0^{(k)}\right\|^2+\left\|x_1^{(k)}\right\|^2\right)\neq 0$ for all $k\in \mathbb{N}$. Consider
\[
y_0^{(k)}=\frac{x_0^{(k)}}{\sqrt{\left\|x_0^{(k)}\right\|^2+\left\|x_1^{(k)}\right\|^2}}, \qquad y_1^{(k)}=\frac{x_1^{(k)}}{\sqrt{\left\|x_0^{(k)}\right\|^2+\left\|x_1^{(k)}\right\|^2}}, \qquad k\in \mathbb{N}.
\]
Note that
\[
N_k = \left(\left\|x_0^{(k)}\right\|^2+\left\|x_1^{(k)}\right\|^2\right)\left\langle \mathsf{T}_U\left(y_0^{(k)},y_1^{(k)}\right), \mathsf{A}_U \left(y_0^{(k)},y_1^{(k)}\right)\right\rangle_{\mathcal{H}\oplus \mathcal{H}}, \qquad k\in \mathbb{N}.
\]
Considering limit on the both sides we have
\[
\lim_{k\to \infty} \left\langle \mathsf{T}_U\left(y_0^{(k)},y_1^{(k)}\right), \mathsf{A}_U \left(y_0^{(k)},y_1^{(k)}\right)\right\rangle_{\mathcal{H}\oplus \mathcal{H}} = \frac{-\alpha}{1-\alpha}.
\] 
Thus, whenever $\alpha = 0$ we reach $(ii)$ and for $\alpha \neq 0$ we have $(iii)$.\\

\noindent $(ii)\Rightarrow (i)$. Suppose (ii) holds. Then there exists a sequence of unit vectors $((y_k,y'_k))_{k\in \mathbb{N}}\subseteq \mathcal{H}\bigoplus \mathcal{H}$ such that
\[
\lim_{k\to \infty}\left\langle \mathsf{T}_U (y_k,y'_k), \mathsf{A}_U (y_k,y'_k) \right\rangle_{\mathcal{H}\oplus \mathcal{H}}=0.
\]
Consider $(\mathbf{h}_k)_{k\in \mathbb{N}}$ in $\mathcal{K}$ such that $\mathbf{h}_k=\left(x_n^{(k)}\right)_{n\in \mathbb{Z}}$, where
\[
{x}_n^{(k)}=\begin{cases}
\mathbf{0}, & \mathrm{if}~n\neq 0,1,\\
y_k, & \mathrm{if}~n=0,\\
y'_k & \mathrm{if}~n=1.
\end{cases}
\]
Consequently,
\begin{align*}
\lim_{k\to \infty }\left\langle \widetilde{U_T} \mathbf{h}_k, \widetilde{U_A} \mathbf{h}_k \right\rangle_{\mathcal{K}}=\lim_{k\to \infty}\left\langle \mathsf{T}_U (y_k,y'_k), \mathsf{A}_U (y_k,y'_k) \right\rangle_{\mathcal{H}\oplus \mathcal{H}}=0,
\end{align*}
and $\widetilde{U_T}\perp_B \widetilde{U_A}$.\\

\noindent $(iii)\Rightarrow (i)$. Suppose $(iii)$ holds. Then there exists a sequence of unit vectors $(u_k,v_k)_{k\in\mathbb{N}}$ in $\mathcal{H}\bigoplus \mathcal{H}$ such that
\[
\lim_{k\to \infty}\left\langle \mathsf{A}_U^* \mathsf{T}_U(u_k,v_k),(u_k,v_k)\right\rangle_{\mathcal{H}\oplus \mathcal{H}}= \gamma,
\]
for some $\gamma < 0 $. Choose sequence of unit vectors $(\mathbf{h}_k)_{k\in \mathbb{N}}$ and $(\mathbf{h'}_k)_{k\in \mathbb{N}}$ in $\mathcal{K}$ such that 
\[\mathbf{h}_k=\left(x_n^{(k)}\right)_{n\in \mathbb{Z}} \quad \mathrm{and} \quad \mathbf{h'}_k=\left(y_n^{(k)}\right)_{n\in \mathbb{Z}},\]
where
\[x^{(k)}_0=x^{(k)}_1=\mathbf{0};\qquad y_n^{(k)}=\begin{cases} \mathbf{0}, & \mathrm{if}~n\neq 0,1;\\
u_k, & \mathrm{if}~n= 0;\\
v_k, & \mathrm{if}~n= 1.
\end{cases}\]
Let 
\[\widetilde{\mathbf{h}}_k = \sqrt{\dfrac{-\gamma}{1-\gamma}}\mathbf{h}_k+\dfrac{1}{\sqrt{1-\gamma}}\mathbf{h'}_k, \qquad k\in \mathbb{N}.\]
Then $\left(\widetilde{\mathbf{h}}_k\right)_{k\in \mathbb{N}}$ is a sequence of unit vectors in $\mathcal{K}$, and we have
\begin{align*}
\left\langle \widetilde{U_T} \widetilde{\mathbf{h}}_k, \widetilde{U_A} \widetilde{\mathbf{h}}_k \right\rangle_{\mathcal{K}} & = \frac{-\gamma}{1-\gamma}+\frac{1}{1-\gamma}\left\langle \mathsf{A}_T^* \mathsf{T}_U(u_k,v_k),(u_k,v_k)\right\rangle_{\mathcal{H}\oplus \mathcal{H}}.
\end{align*}
Taking limit on the both sides we have
\[
\lim_{k\to \infty} \left\langle \widetilde{U_T} \widetilde{\mathbf{h}}_k, \widetilde{U_A} \widetilde{\mathbf{h}}_k \right\rangle_{\mathcal{K}}=0,
\]
and consequently $\widetilde{U_T}\perp_B \widetilde{U_A}$. The proof is now complete.

\end{proof}

The following is an obvious corollary of Theorem \ref{Orthogonality of Halmos Blocks}. Indeed, combining conditions $(ii)$ and $(iii)$ of Theorem \ref{Orthogonality of Halmos Blocks}, one can also state Theorem \ref{Orthogonality of Halmos Blocks} in the following way.

\begin{cor}\label{Corollary to Halmos Blocks}
Let $T, A$ be two contractions acting on a Hilbert space $\mathcal{H}$. Let $\widetilde{U_T}$ and $\widetilde{U_A}$ denote the Sch\"{a}ffer dilations of $T$ and $A$ on $\mathcal{K}=\bigoplus_{-\infty}^\infty \mathcal{H}$, respectively. Then $\widetilde{U_T}\perp_B \widetilde{U_A}$ if and only if $\mathcal{W}(\mathsf{A}_U^* \mathsf{T}_U)$ contains a non-positive real number.
\end{cor}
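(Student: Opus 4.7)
The plan is to derive this corollary as a repackaging of Theorem \ref{Orthogonality of Halmos Blocks}. Since the theorem already gives $(i)\Leftrightarrow(ii)\Leftrightarrow(iii)$, the statement $\widetilde{U_T}\perp_B\widetilde{U_A}$ is equivalent to the disjunction ``$(ii)$ or $(iii)$''. My task is therefore to show that this disjunction matches exactly the condition ``$\mathcal{W}(\mathsf{A}_U^*\mathsf{T}_U)$ contains a non-positive real number''. Since $(iii)$ already supplies the negative reals, what remains is to show $(ii)\Leftrightarrow 0\in\mathcal{W}(\mathsf{A}_U^*\mathsf{T}_U)$.

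First I would record the structural fact that $\mathsf{T}_U$ and $\mathsf{A}_U$ are unitary operators on $\mathcal{H}\oplus\mathcal{H}$, a standard property of Halmos blocks built from $T,T^*,D_T,D_{T^*}$ (and analogously for $A$). Consequently $V:=\mathsf{A}_U^*\mathsf{T}_U$ is also unitary, hence an isometry with $\|V\|=1$, and every unit vector $x$ automatically satisfies $\|Vx\|=\|V\|$. In view of the definition (\ref{eqn:sec3-03}) of the maximal numerical range, this means that the ``norming'' requirement is vacuous, and so
\[
\mathcal{W}(V)=\overline{W(V)}=\overline{\{\langle Vx,x\rangle:\|x\|=1\}}.
\]

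Second, I would apply Theorem \ref{Bhatia-Semrl} to the pair $(\mathsf{T}_U,\mathsf{A}_U)$ on $\mathcal{H}\oplus\mathcal{H}$. Because $\mathsf{T}_U$ is an isometry, every sequence of unit vectors is a norming sequence for it. Thus $\mathsf{T}_U\perp_B\mathsf{A}_U$ is equivalent to the existence of a sequence $(x_n)$ of unit vectors in $\mathcal{H}\oplus\mathcal{H}$ with
\[
\lim_{n\to\infty}\langle \mathsf{T}_U x_n,\mathsf{A}_U x_n\rangle=\lim_{n\to\infty}\langle \mathsf{A}_U^*\mathsf{T}_U x_n,x_n\rangle=0,
\]
which, by the identification above, is precisely the statement $0\in\mathcal{W}(\mathsf{A}_U^*\mathsf{T}_U)$.

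Combining these two steps yields the conclusion: $\widetilde{U_T}\perp_B\widetilde{U_A}$ iff $\mathcal{W}(\mathsf{A}_U^*\mathsf{T}_U)$ contains either a negative real (from $(iii)$) or $0$ (from $(ii)$), i.e., a non-positive real. I do not expect any real obstacle here; the only point one must be slightly careful about is justifying the identification $\mathcal{W}(V)=\overline{W(V)}$ for unitary $V$ and noting that unit vectors in $\mathcal{H}\oplus\mathcal{H}$ (rather than in $\mathcal{K}$) are the relevant norming objects, so that Bhatia--\v{S}emrl is applied at the level of the Halmos block rather than at the level of the full dilation.
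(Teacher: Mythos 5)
Your proposal is correct and follows exactly the route the paper intends: the paper presents this corollary as an immediate consequence of Theorem \ref{Orthogonality of Halmos Blocks} obtained by merging conditions $(ii)$ and $(iii)$, and your identification of $(ii)$ with $0\in\mathcal{W}(\mathsf{A}_U^*\mathsf{T}_U)$ (using that $\mathsf{A}_U^*\mathsf{T}_U$ is unitary, so the norming requirement in the definition of the maximal numerical range is vacuous and Bhatia--\v{S}emrl applies at the level of the Halmos blocks) is precisely the detail the paper leaves implicit. No gaps.
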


\begin{remark}
Orthogonality of two contractions does not ensure the orthogonality of their Sch\"{a}ffer unitary dilations and we will show this in Example \ref{Example: I}. However, for two contractions $T, A$ if $T \perp_B A$ and if $\|T\|=1$, then their Sch\"{a}ffer unitary dilations are orthogonal. We will prove this result in Section \ref{sec:05} (see Theorem \ref{Orthogonality of rho dilations}) in a more general frame for unitary $\rho$-dilations of $T$ and $A$. We will see there that the condition that $\|T\|=1$ is very crucial in this context.

\end{remark}

Also, the orthogonality of dilations of two contractions does not depend upon the orthogonality of the contractions. Indeed, the following example shows that the Sch\"{a}ffer dilations of two non-orthogonal contractions can be orthogonal.

\begin{example}
Let $\mu$ and $\lambda$ be any two non-zero real numbers with $\lambda^2+\mu^2=1.$ Now, let
\[T=\lambda I_2, \qquad A=\mu I_2,\]
where $I_2$ denotes the identity operator on the two-dimensional complex Hilbert space $\mathcal{H}$. Observe that $T\not \perp_B A$. Also, 
\[T=T^*,\quad D_T=D_{T^*}=\mu I_2, \qquad \mathrm{and} \qquad A=A^*,\quad D_A=D_{A^*}=\lambda I_2.\]
Now, for any $(x_0,y_0)\in S_{\mathcal{H}\oplus \mathcal{H}},$ we have
\begin{align*}
\left\langle \mathsf{U}_T(x_0,y_0), \mathsf{U}_A(x_0,y_0)\right\rangle & =  \left\langle (D_Tx_0-T^*y_0, Tx_0+D_{T^*}y_0), (D_Ax_0-A^*y_0, Ax_0+D_{A^*}y_0)\right\rangle_{\mathcal{H}\oplus \mathcal{H}}\\
& = \left\langle (\mu x_0-\lambda y_0, \lambda x_0+\mu y_0), (\lambda x_0-\mu y_0, \mu x_0+\lambda y_0)\right\rangle_{\mathcal{H}\oplus \mathcal{H}}\\
& =2 \mu \lambda (\|x_0\|^2+\|y_0\|^2)+(\lambda^2-\mu^2)\langle x_0, y_0 \rangle_\mathcal{H} - (\lambda^2-\mu^2) \langle y_0,x_0\rangle_\mathcal{H}\\
& =2\mu \lambda + 2i~\mathfrak{Im}(\lambda^2-\mu^2)  \langle x_0, y_0 \rangle_\mathcal{H}.
\end{align*}
If $\lambda$, $\mu$ are of the opposite signs, then choosing $y_0=\mathbf{0}$ we have
\[\left\langle \mathsf{U}_T(x_0,y_0), \mathsf{U}_A(x_0,y_0)\right\rangle_{\mathcal{H}\oplus \mathcal{H}}= 2\mu \lambda < 0.\]
Consequently, by Theorem \ref{Orthogonality of Halmos Blocks} we have $\widetilde{U_T}\perp_B \widetilde{U_A}$.  \qed

\end{example}

However, if $\lambda$, $\mu$ are of the same signs then
$\mathfrak{Re}\left\langle \mathsf{U}_T(x,y), \mathsf{U}_A(x,y)\right\rangle_{\mathcal{H}\oplus \mathcal{H}} = 2\mu \lambda > 0$ for $(x,y)\in S_{\mathcal{H}\oplus \mathcal{H}}$. In that case $\widetilde{U_T}\not\perp_B \widetilde{U_A}$. In Section \ref{sec:07}, we will construct Sch\"{a}ffer-type unitary dilations $\mathbb U_T, \mathbb U_A$ for any two contractions $T, A$ respectively such that $\mathbb U_T \perp_B \mathbb U_A$ even if $T$ and $A$ are not orthogonal. Before that we show an easier way of finding orthogonal unitary dilations of $T$ and $A$.

\subsection{Orthogonal Sch\"{a}ffer dilations}

Consider the Sch\"{a}ffer dilations $\widetilde{U_T}$ of $T$ and $\widetilde{U_{A^*}}$ of $A^*$ on $\mathcal{K}$ as in Equation-(\ref{eqn:uni-dil}). Then $\widetilde{U_{A^*}}^*$ is a unitary dilation of $A$ on $\mathcal{K}$. We show that $\widetilde{U_T}\perp_B \widetilde{U_{A^*}}^*.$

\medskip

For any unit vector $x\in \mathcal{H}$, consider the vector $\mathbf{h}=(h_n)_{m\in\mathbb{Z}}$ in $\mathcal{K}$ defined by
\[h_n=\begin{cases}
 D_Tx, & \mathrm{if}~n=0;\\
-Tx, & \mathrm{if}~n=1;\\
\mathbf{0}, & \mathrm{otherwise}.
\end{cases}\]
Evidently, $\mathbf{h}$ is a unit vector in $\mathcal{K}$. Let $\widetilde{U_T}\mathbf{h}=(l_n)_{n\in \mathbb{Z }}$. Using $D_T^2+T^*T=I_\mathcal{H}$ and $TD_T=D_{T^*}T$ (see CH-I of \cite{NFBK}), a straightforward computation reveals that 
\[l_n=\begin{cases}
\mathbf{0}, & \mathrm{if}~n\neq -1;\\
 x, & \mathrm{if}~n=-1,
\end{cases}\]

\medskip

On the other hand, $\widetilde{U_{A^*}}^*\mathbf{h}=(p_n)_{n\in \mathbb{Z}}$, where
\[p_n=\begin{cases}
AD_Tx, & \mathrm{if}~n=0;\\
D_AD_Tx, & \mathrm{if}~n=1;\\
-Tx, & \mathrm{if}~n=2;\\
\mathbf{0}, & \mathrm{otherwise}.
\end{cases}\]

\medskip

It is now easy to check that 
\[\left\langle \widetilde{U_T}\mathbf{h},\widetilde{U_{A^*}}^*\mathbf{h}\right\rangle=0.\]
Therefore, $\widetilde{U_T}\perp_B \widetilde{U_{A^*}}^*$. Thus, we have constructed unitary dilations $U_T=\widetilde{U_T}$ of $T$ and $U_A=\widetilde{U_{A^*}}^*$ of $A$ such that $U_T\perp_B U_A$.\qed\\

We conclude this Section with the following easy corollary of the above construction of orthogonal dilations.
\begin{cor}
Let $T$ be a contraction acting on a Hilbert space $\mathcal{H}$ and let $\widetilde{U_T}$ on $\mathcal{K}=\bigoplus_{-\infty}^\infty\mathcal{H}$ be the Sch\"{a}ffer dilation of $T$ as in Equation-(\ref{eqn:uni-dil}). Then $\widetilde{U_T}\perp_B I_\mathcal{K}$.
\end{cor}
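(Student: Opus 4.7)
The plan is to recycle the unit vector $\mathbf{h} \in \mathcal{K}$ that was just built in the preceding ``Orthogonal Sch\"{a}ffer dilations'' paragraph. For any fixed unit vector $x \in \mathcal{H}$, that vector $\mathbf{h}=(h_n)_{n\in\mathbb{Z}}$ has $h_0 = D_T x$, $h_1 = -Tx$, and $h_n = \mathbf{0}$ otherwise. Using $D_T^2 + T^*T = I_\mathcal{H}$ we have $\|\mathbf{h}\|^2 = \|D_Tx\|^2 + \|Tx\|^2 = 1$, so $\mathbf{h}$ is a unit vector in $\mathcal{K}$, and the computation already carried out shows that $\widetilde{U_T}\mathbf{h}$ has its unique nonzero entry equal to $x$ sitting at index $-1$.

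Next I would just observe that the supports of $\widetilde{U_T}\mathbf{h}$ (concentrated at index $-1$) and of $I_\mathcal{K}\mathbf{h} = \mathbf{h}$ (concentrated at indices $0$ and $1$) are disjoint in the direct sum $\mathcal{K} = \bigoplus_{-\infty}^{\infty}\mathcal{H}$, hence
\[
\langle \widetilde{U_T}\mathbf{h}, I_\mathcal{K}\mathbf{h} \rangle_\mathcal{K} = 0.
\]
Since $\widetilde{U_T}$ is unitary, $\|\widetilde{U_T}\| = 1 = \|\mathbf{h}\| = \|\widetilde{U_T}\mathbf{h}\|$, so $\mathbf{h}$ is in fact a norming vector for $\widetilde{U_T}$ satisfying the additional condition $\widetilde{U_T}\mathbf{h} \perp I_\mathcal{K}\mathbf{h}$ in the Hilbert space sense.

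The conclusion then follows either by invoking the easy direction of Theorem \ref{Bhatia-Semrl}, or directly from the one-line estimate
\[
\|\widetilde{U_T} + \lambda I_\mathcal{K}\|^2 \;\geq\; \|(\widetilde{U_T} + \lambda I_\mathcal{K})\mathbf{h}\|^2 \;=\; \|\widetilde{U_T}\mathbf{h}\|^2 + |\lambda|^2\|\mathbf{h}\|^2 \;=\; 1 + |\lambda|^2 \;\geq\; \|\widetilde{U_T}\|^2,
\]
valid for every scalar $\lambda$, giving $\widetilde{U_T}\perp_B I_\mathcal{K}$. There is essentially no obstacle here: all the computational content was done in the paragraph immediately preceding the corollary, and the only extra observation needed is that $\widetilde{U_T}\mathbf{h}$, being supported only at index $-1$, is automatically orthogonal to every vector supported outside index $-1$, which in particular includes $\mathbf{h}$ itself. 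This makes the identity $I_\mathcal{K}$ an ``effortless'' companion for the previous construction, explaining why the author presents this as an immediate corollary.
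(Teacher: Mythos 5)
Your proposal is correct and follows the paper's own argument exactly: the paper likewise reuses the unit vector $\mathbf{h}$ from the preceding construction, notes that $\langle \widetilde{U_T}\mathbf{h},\mathbf{h}\rangle_{\mathcal K}=0$ (which holds precisely because $\widetilde{U_T}\mathbf{h}$ is supported at index $-1$ while $\mathbf{h}$ is supported at indices $0$ and $1$), and concludes since every unit vector is norming for a unitary. Your added direct estimate $\|(\widetilde{U_T}+\lambda I_{\mathcal K})\mathbf{h}\|^2=1+|\lambda|^2$ is a valid, self-contained way to finish without citing Theorem \ref{Bhatia-Semrl}, but it is not a different route in any essential sense.
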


\begin{proof}
Consider the same unit vector $\mathbf{h}$ in $\mathcal{K}$ as it was chosen in above construction. It is easy to verify that $\left\langle \widetilde{U_T}\mathbf{h},\mathbf{h}\right\rangle_\mathcal{K}=0.$
Therefore, $\widetilde{U_T}\perp_B I_\mathcal{K}$.
\end{proof}

\section{Orthogonality of unitary $\rho$-dilations} \label{sec:05}

\vspace{0.4cm}

\noindent In this Section, we study the Birkhoff-James orthogonality in a more general setting. More precisely, we investigate if Birkhoff-James orthogonality of elements in $\zeta_\rho$ class is preserved by their respective unitary $\rho$-dilations. Recall that for any $\rho > 0$, an operator $T$ on a Hilbert space $\mathcal{H}$ is said to admit a unitary $\rho$-dilation or in other words $T$ belongs to the class $\zeta_\rho$ if there is a unitary $U$ on some Hilbert space $\mathcal{K}\supseteq \mathcal{H}$ such that
\[
T^n=\rho P_{\mathcal{H}}U^n|_\mathcal{H},\quad \text{ for all }\; n\in \mathbb N \cup \{0\}.
\]
For any $T\in \zeta_\rho$, it is easy to see that $\|T\|\leq \rho$. It follows from \cite{Durszt} that the class $\zeta_\rho$ is strictly increasing, i.e., $\zeta_\rho \subsetneq \zeta_{\rho'}$ for $0<\rho<\rho'$. Needless to mention that the class $\zeta_1$ consists of all contractions. An interested reader is referred to \cite{Nagy & Foias, NFBK} for a further reading on $\zeta_\rho$ class of operators. The first main result of this Section shows that Birkhoff-James orthogonality of any two operators $T,A$ in $\zeta_\rho$ is preserved by any of their respective unitary $\rho$-dilations if $\|T\|=\rho$. Indeed, it turns out that the norm of an element in $\zeta_\rho$ plays a significant role in this connection.

\begin{theorem}\label{Orthogonality of rho dilations}

Let $\mathcal{H}$ be a Hilbert space and $T,A\in \mathcal{B}(\mathcal{H})$. Let $T, A\in \zeta_\rho$ for any $\rho\in (0,\infty)$ with $\|T\|=\rho$. If $T$ and $A$ possess unitary $\rho$-dilations $U_T$ and $U_A$ respectively on a common space $\mathcal{K}\supseteq \mathcal{H}$, then $U_T\perp_B U_A$ whenever $T\perp_B A$.

\end{theorem}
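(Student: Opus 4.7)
The plan is to take a norming sequence for $T$ in $\mathcal{H}$ coming from the hypothesis $T\perp_B A$ and show that the same sequence, viewed inside $\mathcal{K}$, witnesses $U_T\perp_B U_A$ via Theorem \ref{Bhatia-Semrl}. Since $U_T$ is a unitary on $\mathcal{K}$, every sequence of unit vectors is automatically a norming sequence for $U_T$, so the task reduces to producing a sequence $(x_n)\subseteq S_{\mathcal{H}}\subseteq S_{\mathcal{K}}$ with $\langle U_T x_n, U_A x_n\rangle_{\mathcal{K}}\to 0$.

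By Theorem \ref{Bhatia-Semrl} applied to $T\perp_B A$, fix unit vectors $(x_n)\subseteq \mathcal{H}$ with $\|Tx_n\|\to\|T\|=\rho$ and $\langle Tx_n,Ax_n\rangle\to 0$. The key observation I would then isolate is that the hypothesis $\|T\|=\rho$ forces the ``defect'' of the dilation to vanish along this sequence. Indeed, from $T=\rho P_{\mathcal{H}}U_T|_{\mathcal{H}}$ we have $\|P_{\mathcal{H}}U_T x_n\|=\rho^{-1}\|Tx_n\|\to 1$, and since $U_T x_n$ is a unit vector in $\mathcal{K}$,
\[
\|(I-P_{\mathcal{H}})U_T x_n\|^2 = 1 - \|P_{\mathcal{H}}U_T x_n\|^2 \longrightarrow 0.
\]
This single estimate is the entire engine of the proof; it is the place where $\|T\|=\rho$ is used and is precisely what fails in Example \ref{Example: I}.

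With this in hand, I would split the inner product along the decomposition $\mathcal{K}=\mathcal{H}\oplus\mathcal{H}^{\perp}$:
\[
\langle U_T x_n, U_A x_n\rangle_{\mathcal{K}} = \langle P_{\mathcal{H}}U_T x_n, P_{\mathcal{H}}U_A x_n\rangle_{\mathcal{H}} + \langle (I-P_{\mathcal{H}})U_T x_n, (I-P_{\mathcal{H}})U_A x_n\rangle,
\]
the cross terms vanishing by orthogonality of $\mathcal{H}$ and $\mathcal{H}^{\perp}$. Using $P_{\mathcal{H}}U_T x_n=\rho^{-1}Tx_n$ and $P_{\mathcal{H}}U_A x_n=\rho^{-1}Ax_n$, the first summand equals $\rho^{-2}\langle Tx_n,Ax_n\rangle \to 0$ by choice of $(x_n)$. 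For the second, Cauchy--Schwarz bounds it by $\|(I-P_{\mathcal{H}})U_T x_n\|\cdot\|(I-P_{\mathcal{H}})U_A x_n\|\leq \|(I-P_{\mathcal{H}})U_T x_n\|$, which tends to $0$ by the defect estimate. Combining, $\langle U_T x_n, U_A x_n\rangle_{\mathcal{K}}\to 0$, and Theorem \ref{Bhatia-Semrl} yields $U_T\perp_B U_A$.

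The only potential subtlety is the one just mentioned: ensuring that the orthogonal defect $\|(I-P_{\mathcal{H}})U_T x_n\|$ genuinely vanishes, which is exactly where the norm condition $\|T\|=\rho$ is exploited. Everything else is a clean Pythagorean computation based on the defining relation $T^n=\rho P_{\mathcal{H}}U_T^n|_{\mathcal{H}}$ at $n=1$; higher powers of the dilation relation are not needed.
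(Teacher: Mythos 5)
Your proposal is correct and follows essentially the same route as the paper: the defect estimate $\|(I-P_{\mathcal{H}})U_Tx_n\|^2=1-\rho^{-2}\|Tx_n\|^2\to 0$ is exactly the paper's computation $\|Tx_n-\rho U_Tx_n\|^2=\rho^2-\|Tx_n\|^2\to 0$ in Equation (\ref{Eq:1}), and both proofs then transfer $\langle Tx_n,Ax_n\rangle\to 0$ to $\langle U_Tx_n,U_Ax_n\rangle\to 0$ and invoke Theorem \ref{Bhatia-Semrl}. Your splitting along $\mathcal{K}=\mathcal{H}\oplus\mathcal{H}^\perp$ is just a repackaging of the paper's direct difference estimate, so no substantive difference.
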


\begin{proof}
Let $U_T$ and $U_A$ be any unitary $\rho$-dilations of $T$ and $A$ respectively on a space $\mathcal{K}\supseteq \mathcal{H}$. For any sequence of unit vectors $(x_n)$ in $\mathcal{H}$, we have
\begin{align*}
\|Tx_n-\rho U_Tx_n\|^2 & = \langle Tx_n,Tx_n\rangle_\mathcal{K}-2~\mathfrak{Re} \langle Tx_n, \rho U_Tx_n \rangle_\mathcal{K}+\rho^2\langle U_Tx_n,U_Tx_n\rangle_\mathcal{K}\\
& = \|Tx_n\|^2-2~\mathfrak{Re} \langle Tx_n, \rho P_\mathcal{H}U_Tx_n \rangle_\mathcal{K}+\rho^2\\
& = \|Tx_n\|^2-2\|Tx_n\|^2+\rho^2\\
& = \rho^2-\|Tx_n\|^2.
\end{align*}
In particular, for a norming sequence $(x_n)$ of $T$ we have 
\begin{equation}\label{Eq:1}
\lim_{n\to \infty } \|Tx_n-\rho U_Tx_n\|^2 = \lim_{n\to \infty } (\rho^2-\|Tx_n\|^2) = 0.
\end{equation}
Since $T\perp_B A$, there is a norming sequence $(y_n)$ of $T$ such that
\[\lim_{n\to \infty }\langle Ty_n,Ay_n \rangle_\mathcal{H} = 0.\]
Note that 
\begin{align*}
\left|\langle \rho U_T y_n, \rho U_A y_n\rangle_\mathcal{K} - \langle T y_n, A y_n\rangle_\mathcal{H} \right| & = \left|\langle \rho U_T y_n, \rho U_A y_n\rangle_\mathcal{K} - \langle T y_n, \rho P_\mathcal{H}U_A y_n\rangle_\mathcal{K}\right|\\
& =  \left|\langle \rho U_T y_n ,\rho U_A y_n\rangle_\mathcal{K} - \langle T y_n, \rho U_A y_n\rangle_\mathcal{K}\right|\\
& = \left|\langle \rho U_T y_n-T y_n, \rho U_A y_n\rangle_\mathcal{K} \right|\\
& \leq \left\| \rho U_T y_n-T y_n\right\|\rho.
\end{align*}
It follows from (\ref{Eq:1}) that
\[\lim_{n\to \infty} \left|\langle \rho U_T y_n, \rho U_A y_n\rangle_\mathcal{K} - \langle T y_n, A y_n\rangle_\mathcal{H} \right|  = 0.\]
Thus, we obtained a sequence of unit vectors $(y_n)$ in $\mathcal{K}$ such that
\[
\lim_{n \to \infty}\|\rho U_Ty_n\|=\rho=\|\rho U_T\| \quad \mathrm{and}\quad \lim_{n\to \infty}\langle \rho U_Ty_n, \rho U_Ay_n \rangle_\mathcal{K}=0.
\]
Therefore, by Theorem \ref{Bhatia-Semrl} we have $\rho U_T\perp_B \rho U_A$. Again, the homogeneity of Birkhoff-James orthogonality gives $U_T\perp_B U_A$. This completes the proof.
\end{proof}

A unitary dilation of a contraction is a special case of unitary $\rho$-dilation when $\rho =1$. So, we have the following corollaries.

\begin{cor}\label{Corollary for isometric dilation}
Let $T$ and $A$ be two contractions acting on a Hilbert space $\mathcal{H}$ with $\|T\|=1$. If $T$ and $A$ dilate to unitaries (or isometries) $U_T, U_A$ respectively on a common space $\mathcal K \supseteq \mathcal H $, then $T\perp_B A$ implies that $U_T\perp_B U_A$.
\end{cor}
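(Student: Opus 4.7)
The plan is to handle the two cases (unitary dilations and isometric dilations) separately. For unitary dilations, the corollary is an immediate specialization of Theorem \ref{Orthogonality of rho dilations} to $\rho = 1$: the class $\zeta_1$ is precisely the class of contractions, a unitary dilation of a contraction is exactly a unitary $1$-dilation, and the hypothesis $\|T\|=\rho$ reads $\|T\|=1$, which is given. So there is nothing new to prove in this case.

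For isometric dilations, the plan is to observe that the proof of Theorem \ref{Orthogonality of rho dilations} uses only two features of the dilating operator: (a) it is norm-preserving on $\mathcal H$, and (b) it compresses back to $T$, i.e.\ $P_{\mathcal H} U_T|_{\mathcal H} = T$ (this is the $n=1$ instance of the dilation identity with $\rho=1$). Both properties continue to hold when $U_T$ is replaced by an isometric dilation $V_T$, so the same computation carries over verbatim. Concretely, for any unit sequence $(x_n)\subseteq \mathcal H$ one has $\|V_T x_n\|=\|x_n\|=1$ and $\langle T x_n, V_T x_n\rangle_{\mathcal K}=\langle T x_n, P_{\mathcal H} V_T x_n\rangle_{\mathcal H}=\|T x_n\|^2$, whence
\[
\|T x_n - V_T x_n\|^2 = 1 - \|T x_n\|^2.
\]
If $(x_n)$ is a norming sequence for $T$ (with $\|T\|=1$), the right-hand side tends to $0$. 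Picking such a sequence that additionally satisfies $\langle T x_n, A x_n\rangle\to 0$ (which exists by $T\perp_B A$ and Theorem \ref{Bhatia-Semrl}), the elementary estimate
\[
\bigl|\langle V_T x_n, V_A x_n\rangle_{\mathcal K} - \langle T x_n, A x_n\rangle_{\mathcal H}\bigr| \le \|V_T x_n - T x_n\|\,\|A\|
\]
(which mirrors the one in the proof of Theorem \ref{Orthogonality of rho dilations}) forces $\langle V_T x_n, V_A x_n\rangle\to 0$. Together with $\|V_T x_n\|\to 1=\|V_T\|$, Theorem \ref{Bhatia-Semrl} then gives $V_T\perp_B V_A$.

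The only thing that could be a genuine obstacle is verifying that no step in the proof of Theorem \ref{Orthogonality of rho dilations} tacitly used surjectivity of $U_T$, since that is the essential distinction between a unitary and a mere isometry. A brief inspection confirms that only the isometric property (preservation of norm on $\mathcal H$) and the compression identity are invoked, so the isometric case reduces to the unitary one without new ideas. Note that the alternative route of first extending $V_T, V_A$ to unitaries on a common larger space would give orthogonality of the extensions but not directly of $V_T, V_A$ themselves (the witnessing unit vectors could lie outside $\mathcal K$), so the direct argument above is the cleaner path.
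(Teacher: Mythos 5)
Your proposal is correct and follows essentially the same route as the paper, which likewise disposes of the unitary case as the $\rho=1$ instance of Theorem \ref{Orthogonality of rho dilations} and of the isometric case by observing that the proof of that theorem goes through verbatim with $U_T,U_A$ replaced by $V_T,V_A$ (your verification that only the norm-preservation on $\mathcal H$ and the compression identity $P_{\mathcal H}V_T|_{\mathcal H}=T$ are used, never surjectivity, is exactly the point the paper leaves implicit). One cosmetic slip: in your displayed estimate the second factor should be $\|V_A x_n\|=1$ rather than $\|A\|$, since the inner product is taken against $V_A x_n$, not $A x_n$; this does not affect the conclusion.
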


\begin{proof}
The `unitary dilation' part of this corollary is a special case of Theorem \ref{Orthogonality of rho dilations} with $\rho=1$. The `isometric dilation' part follows directly by replacing $\rho = 1$, $U_T=V_T$ and $U_A=V_A$ in Theorem \ref{Orthogonality of rho dilations}.
\end{proof}

\begin{cor}
Let $T$ and $A$ be two contractions acting on a Hilbert space $\mathcal{H}$ such that $T$ is self-adjoint with $\|T\|=1$. If $T$ and $A$ admit unitary (or isometric) dilations $U_T$ and $U_A$ respectively on a common space $\mathcal K \supseteq \mathcal H$, then $T\perp_B A$ and $T^2\perp_B A$ imply that $U_T^k\perp_B U_A$ for all positive integers $k$.

\end{cor}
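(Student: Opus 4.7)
The plan is to chain together three results already established earlier in the paper: Corollary~\ref{Corollary about self-adjoint} (which promotes orthogonality of $T$ (resp.\ $T^2$) to all odd (resp.\ even) powers when $T$ is self-adjoint), Corollary~\ref{norming sequence} (which gives $\|T^k\|=\|T\|^k$ for self-adjoint $T$), and Corollary~\ref{Corollary for isometric dilation} (which preserves Birkhoff–James orthogonality under unitary/isometric dilation when the first operator has norm~$1$).

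First, I would reduce the assumption to the uniform statement $T^k \perp_B A$ for every $k\geq 1$. Indeed, since $T$ is self-adjoint and $T\perp_B A$, part~(ii) of Corollary~\ref{Corollary about self-adjoint} gives $T^{2j+1}\perp_B A$ for all $j\geq 0$, handling the odd powers. Similarly, since $T^2\perp_B A$, part~(i) of the same corollary gives $T^{2j}\perp_B A$ for all $j\geq 1$, handling the even powers. Combining these, $T^k \perp_B A$ for every positive integer~$k$.

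Next, I would verify that $U_T^k$ is a unitary (resp.\ isometric) dilation of $T^k$ on the same common space $\mathcal{K}$: $U_T^k$ is certainly a unitary (resp.\ an isometry) when $U_T$ is, and for every $n\in\mathbb{N}\cup\{0\}$ one has
\[
P_{\mathcal{H}} (U_T^k)^n |_{\mathcal{H}} = P_{\mathcal{H}} U_T^{kn} |_{\mathcal{H}} = T^{kn} = (T^k)^n.
\]
So the pair $(U_T^k, U_A)$ is a pair of unitary (resp.\ isometric) dilations of $(T^k, A)$ on the common space $\mathcal{K}$.

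Finally, I would invoke Corollary~\ref{norming sequence} to note that $\|T^k\|=\|T\|^k=1$, and then apply Corollary~\ref{Corollary for isometric dilation} to the pair $(T^k, A)$: since $T^k \perp_B A$ and $\|T^k\|=1$, the corollary yields $U_T^k \perp_B U_A$. This holds for every positive integer~$k$, completing the proof. There is no real obstacle here beyond bookkeeping; the only point worth flagging is the verification that $U_T^k$ inherits the dilation property for $T^k$ on the same space $\mathcal{K}$, which is what allows Corollary~\ref{Corollary for isometric dilation} to be applied without moving to a larger dilation space.
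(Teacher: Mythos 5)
Your proposal is correct and follows the paper's own argument exactly: it chains Corollary \ref{Corollary about self-adjoint} (to get $T^k\perp_B A$ for all $k$), the observation that $U_T^k$ dilates $T^k$ on the same space, Corollary \ref{norming sequence} (to get $\|T^k\|=1$), and Corollary \ref{Corollary for isometric dilation}. The only difference is that you spell out the dilation-property verification that the paper dismisses as evident.
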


\begin{proof}
We only prove the `unitary dilation' part and the proof for the `isometric dilation' part is similar. Since $T\perp_B A$ and $T^2\perp_B A$, it follows from Corollary \ref{Corollary about self-adjoint} that $T^k\perp_B A$ for all $k\geq 0$. For any unitary dilation $U_T$ of $T$ on $\mathcal{K}$, $U_T^{k}$ is evidently a unitary dilation of $T^k$ on $\mathcal{K}$. It follows from Corollary \ref{norming sequence} that $\|T^k\|=1$, for all $k\geq 1$. Therefore, by Corollary \ref{Corollary for isometric dilation}, we have that $U_T^k\perp_B U_A$ for all natural numbers $k$ as desired.
\end{proof}

\begin{cor}
Let $\mathcal{H}$ be a Hilbert space and $T$ be a contraction on $\mathcal{H}$ such that $\|D_T\|=1$. Then the following hold.
\begin{itemize}
    \item[(i)] Suppose $T$ and $D_T$ dilate to unitaries on a common space $\mathcal{K}\supseteq \mathcal{H}$. Then $U_{D_T}^k\perp_B U_T^j$ for any unitary dilations $U_{D_T}$ of $D_T$ and $U_T$ of $T$ on $\mathcal{K}$, where $k,j$ are any positive integers. 

    \item[(ii)] Suppose $T$ and $D_T$ dilate to isometries on a common space $\mathcal{K}\supseteq \mathcal{H}$. Then $V_{D_T}^k\perp_B V_T^j$ for any isometric dilations $V_{D_T}$ of $D_T$ and $V_T$ of $T$ on $\mathcal{K}$, where $k,j$ are any positive integers.
\end{itemize}
\end{cor}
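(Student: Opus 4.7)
The plan is to derive this corollary as a direct application of three earlier ingredients: Corollary \ref{Orthogonality of T and DT} (which supplies $D_T^k \perp_B T^j$ whenever $\|D_T\|=1$), Corollary \ref{norming sequence} (which for the self-adjoint operator $D_T$ gives $\|D_T^k\|=\|D_T\|^k=1$), and Corollary \ref{Corollary for isometric dilation} (which promotes a Birkhoff--James orthogonality between two contractions, one of which has norm one, to an orthogonality between their unitary or isometric dilations on a common space).

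First I would note the elementary but crucial observation that powers of dilations are dilations of powers. Concretely, if $U_T$ is a unitary dilation of $T$, then for each positive integer $j$ and each nonnegative integer $n$ one has $P_{\mathcal H}(U_T^j)^n|_{\mathcal H} = P_{\mathcal H}U_T^{jn}|_{\mathcal H} = T^{jn} = (T^j)^n$, so $U_T^j$ is a unitary dilation of $T^j$ acting on $\mathcal K$. The identical computation shows that $U_{D_T}^k$ is a unitary dilation of $D_T^k$ on $\mathcal K$. The same argument works verbatim when unitaries are replaced by isometries, giving the analogous statement for part (ii).

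Next, since $T$ is a contraction, $D_T = (I - T^*T)^{1/2}$ is a positive contraction; in particular, $D_T^k$ is a contraction for every $k \geq 1$, and by Corollary \ref{norming sequence} the hypothesis $\|D_T\|=1$ yields $\|D_T^k\|=1$. Meanwhile, Corollary \ref{Orthogonality of T and DT} directly delivers $D_T^k \perp_B T^j$ for all positive integers $k,j$. I would then apply Corollary \ref{Corollary for isometric dilation} with the roles of $(T,A)$ played by $(D_T^k, T^j)$, the dilations being $U_{D_T}^k$ and $U_T^j$ on the common space $\mathcal K$. Since $\|D_T^k\|=1$ and $D_T^k \perp_B T^j$, the corollary immediately gives $U_{D_T}^k \perp_B U_T^j$, which is part (i). Part (ii) follows by the same chain of reasoning, invoking the isometric-dilation version of Corollary \ref{Corollary for isometric dilation} and using that $V_{D_T}^k, V_T^j$ are isometric dilations of $D_T^k, T^j$ respectively.

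There is no real obstacle here beyond verifying that each hypothesis needed to invoke Corollary \ref{Corollary for isometric dilation} is in place; the content is entirely packaged by the earlier results, and the proof amounts to assembling them in the right order.
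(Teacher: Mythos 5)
Your proof is correct and follows essentially the same route as the paper: observe that $U_{D_T}^k$ and $U_T^j$ dilate $D_T^k$ and $T^j$, invoke Corollary \ref{Orthogonality of T and DT} for $D_T^k\perp_B T^j$ and Corollary \ref{norming sequence} for $\|D_T^k\|=1$, then apply the $\rho=1$ orthogonality-preservation result (the paper cites Theorem \ref{Orthogonality of rho dilations} directly where you cite its specialization, Corollary \ref{Corollary for isometric dilation}, but these are the same argument).
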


\begin{proof}

We prove (i) only as proof for (ii) is similar. Let $U_{D_T}$ and $U_T$ be any unitary dilations of $D_T$ and $T$ respectively on $\mathcal{K}$. Therefore, for any positive integers $k,j$, the unitaries $U_{D_T}^k$ and $U_T^j$ are unitary dilations of $D_T^k$ and $T^j$ respectively. By Corollary \ref{Orthogonality of T and DT}, we have $D_T^k\perp_B T^j$. Since $\|D_T^k\|=1$, it follows from Theorem \ref{Orthogonality of rho dilations} that $U_{D_T}^k\perp_B U_T^j$.

\end{proof}

The assumption that $\|T\|=\rho$ in Theorem \ref{Orthogonality of rho dilations} is crucial and cannot be ignored. The following example shows that the orthogonality in $\zeta_\rho$ class does not carry over in general to the class of corresponding unitary $\rho$-dilations unless $\|T\|=\rho$.

\begin{example}\label{Example: I}
Consider the following $2\times 2$ scalar matrices
\[
T =\frac{1}{\sqrt{2}}\begin{bmatrix}
1 & 0\\
0 & 1
\end{bmatrix}, \qquad A =\frac{1}{\sqrt{2}}\begin{bmatrix}
0 & -1\\
1 & 0
\end{bmatrix}.
\]
Evidently, $T$ and $A$ are members of $\zeta_1$ and $\|T\|=\|A\|=\dfrac{1}{\sqrt{2}}$. The defect spaces of both $T$ and $A$ are equal to $\mathcal{H}$. Thus, both $T$ and $A$ admit unitary dilations over the same minimal space $\mathcal K=\bigoplus_{-\infty}^\infty \mathcal{H}$. Also, $T$ and $A$ are mutually orthogonal as $T$ and $A$ are scalar multiple of unitary operators and $\langle T(1,0), A(1,0)\rangle=0$.
Let $\widetilde{U_T}$ and $\widetilde{U_A}$ as in Equation-(\ref{eqn:uni-dil}) be the Sch\"{a}ffer dilations of $T$ and $A$ respectively on $\mathcal{K}$. Then for any unit vector $\mathbf{x}=(\dots, x_{-1}, x_0, x_1, \dots)$ in $\mathcal{K}$ we have
\[\left\langle \widetilde{U_T}\mathbf{x}, \widetilde{U_A}\mathbf{x}\right\rangle_\mathcal{K}=\langle \mathsf{T}_U(x_0,x_1), \mathsf{A}_U (x_0,x_1)\rangle_{\mathcal{H}\oplus \mathcal{H}}\; +1-(\|x_0\|^2+\|x_1\|^2),\]
where $\mathsf{T}_U$ and $\mathsf{A}_U$ are the $2 \times 2$ Halmos blocks as in (\ref{Halmos block}), i.e.
\[
\mathsf{T}_U=\begin{bmatrix}
D_T & -T^*\\
T & D_{T^*}
\end{bmatrix} \qquad \mathrm{and} \qquad \mathsf{A}_U=\begin{bmatrix}
D_A & -A^*\\
A & D_{A^*}
\end{bmatrix}.
\]
Evidently
\[
T=T^*=D_T=D_{T^*}=D_A=D_{A^*} \quad \mathrm{and} \quad A^*=-A.
\]
Now,
\begin{align*}
\left\langle \mathsf{T}_U(x_0,x_1), \mathsf{A}_U(x_0,x_1)\right\rangle & = \left\langle (D_Tx_0-T^*x_1, Tx_0+D_{T^*}x_1), (D_Ax_0-A^*x_1, Ax_0+D_{A^*}x_1)\right\rangle\\
& = \left\langle (Tx_0-Tx_1, Tx_0+Tx_1), (Tx_0+Ax_1, Ax_0+Tx_1)\right\rangle\\
& = \frac{1}{2}(\|x_0\|^2+\|x_1\|^2)+i~\mathfrak{Im}\langle x_0, x_1\rangle+i\sqrt{2}~\mathfrak{Im}\langle x_0, Ax_1 \rangle\\
&\qquad +\frac{1}{\sqrt{2}}(\langle x_0,Ax_0\rangle-\langle x_1,Ax_1\rangle).
\end{align*}
Therefore, 
\begin{align*}
\left\langle \widetilde{U_T}\mathbf{x}, \widetilde{U_A}\mathbf{x}\right\rangle_\mathcal{K} & =1-\frac{1}{2}(\|x_0\|^2+\|x_1\|^2)+i~\mathfrak{Im}\langle x_0, x_1\rangle+i\sqrt{2}~\mathfrak{Im}\langle x_0, Ax_1 \rangle\\
&\qquad +\frac{1}{\sqrt{2}}(\langle x_0,Ax_0\rangle-\langle x_1,Ax_1\rangle)
\end{align*}
Note that $\frac{1}{\sqrt{2}}(\langle x_0,Ax_0\rangle-\langle x_1,Ax_1\rangle)$ is either zero or purely imaginary. Thus,
\[\mathfrak{Re}\left\langle \widetilde{U_T}\mathbf{x}, \widetilde{U_A}\mathbf{x}\right\rangle_\mathcal{K} \geq \frac{1}{2},\]
for all unit vectors $\mathbf{x}$ in $\mathcal{K}$. Consequently, $\widetilde{U_T}\not\perp_B \widetilde{U_A}$. \qed

\end{example}

\medskip

The norm attainment set plays an important role in Birkhoff-James orthogonality. Let $M_T$ be the norm attainment set of an operator $T$ acting on a Hilbert space $\mathcal{H}$, i.e.
\[
M_T=\{x\in S_{\mathcal{H}}:~\|Tx\|=\|T\|\}.
\]
We learn from the literature that $M_T=S_{\mathcal{H}_0}=\{ x\in \mathcal H_0\,:\, \|x\|=1 \}$ for some subspace $\mathcal{H}_0$ of $\mathcal{H}$ (e.g. see \cite{Sain & Paul}). Now Equation-(\ref{Eq:1}) shows that if $\|T\|=\rho$ and $M_T\neq \emptyset$, then for any $x\in M_T$, $Tx=\rho U_Tx$, for all unitary $\rho$-dilations $U_T$ of $T$. Therefore, $\rho U_T$ is an extension of $T$ on $\mathrm{span}\{M_T\}=\mathcal{H}_0$. In particular, this is true for any compact operator which is a member of $\zeta_\rho$.\\

We now show that the converse of Theorem \ref{Orthogonality of rho dilations} is not always true, i.e. unitary $\rho$-dilations $U_T$ of $T$ and $U_A$ of $A$ can be Birkhoff-James orthogonal even if $T\not\perp_B A$. This is apparent from the right-symmetry of the unitary operators. The following example shows that counter examples can be constructed in individual $\zeta_\rho$ classes. Indeed, we can find contractions $T, A$ with $T \not\perp_B A$ such that any unitary $\rho$-dilations $U_T$ of $T$ and $U_A$ of $A$ acting on a common space $\mathcal K \supseteq \mathcal H$ are orthogonal. 

\begin{example}\label{Example for the converse}

Choose any $\rho\in (0,\infty)$ and let us consider the following two $4\times 4$ scalar matrices
\[
A = \begin{bmatrix}
0        & 0 & 0 & 0\\
\rho     & 0 & 0 & 0\\
0        & 0 & 0 & \rho\\
0        & 0 & 0 & 0
\end{bmatrix},\qquad
T=\begin{bmatrix}
0        & 0 & 0 & 0\\
\rho     & 0 & 0 & 0\\
0        & 0 & 0 & 0\\
0        & 0 & 0 & 0
\end{bmatrix}.
\]
It is easy to see that $T^n=A^n=\mathbf{0}$ for all $n\geq 2$ and $\|T\|=\|A\|=\rho$. Therefore, $T,A\notin \zeta_{\rho'}$ for any $\rho'$ satisfying $0<\rho'<\rho< \infty$. We now show that $T, A\in \zeta_\rho$. Let $\mathcal{K}$ be a Hilbert space with an orthonormal basis $\{\phi_m:~m\in \mathbb{Z}\}$. Let $\{e_1,e_2,e_3,e_4\}$  denote the standard orthonormal basis of $\mathbb{C}^4$. Identify $e_j$ to $\phi_j$ for each $j\in \{1,2,3,4\}$. Consider bijective maps $f: \mathbb{Z}\to \mathbb{Z}$ and $g:\mathbb{Z}\to \mathbb{Z}$ defined by
\begin{align*}
& f(m)=m+2, \qquad  m\geq 5,~ m\leq -2,\\
& f(-1)=4, f(0)=1, f(1)=2, f(2)=5, f(3)=6, f(4)=3;\\
& g(m)=m+3, \qquad m\geq 2,~ m\leq -3;\\
& g(-2)=4,~g(-1)=3,~ g(0)=1,~g(1)=2.
\end{align*}
Notice that
\begin{align*}
f^n\{1,2,3,4\}\cap \{1,2,3,4\}=\emptyset, \qquad n\geq 2,\\
g^n\{1,2,3,4\}\cap \{1,2,3,4\}=\emptyset, \qquad n\geq 2.
\end{align*}
Consider the linear operators $U_A$ and $U_T$ on $\mathcal{K}$ defined by
\begin{align*}
& U_A\phi_m=\phi_{f(m)}, \qquad m\in \mathbb{Z}\\
& U_T\phi_m=\phi_{g(m)}, \qquad m\in \mathbb{Z}.
\end{align*}
Then $U_A,U_T$ are unitary operators on $\mathcal{K}$, and we have \begin{center}
\begin{tabular}{l l}
 $\rho P_\mathcal{H}U_A e_1=\rho P_\mathcal{H}e_2=\rho e_2,$ $\quad$  &  $\rho P_\mathcal{H}U_T e_1=\rho P_\mathcal{H}e_2=\rho e_2,$\\
$\rho P_\mathcal{H}U_A e_2=\rho P_\mathcal{H}\phi_5=\mathbf{0},$ $\quad$  &  $\rho P_\mathcal{H}U_T e_2=\rho P_\mathcal{H}\phi_5=\mathbf{0},$\\
 $\rho P_\mathcal{H}U_A e_3=\rho P_\mathcal{H}\phi_6=\mathbf{0}$, $\quad$  &  $\rho P_\mathcal{H}U_T e_3=\rho P_\mathcal{H}\phi_6=\mathbf{0},$\\
 $\rho P_\mathcal{H}U_A e_4=\rho P_\mathcal{H}e_3=\rho e_3,$ $\quad$  &  $\rho P_\mathcal{H}U_T e_4=\rho P_\mathcal{H}\phi_7=\mathbf{0}.$
\end{tabular}
\end{center}

\noindent Moreover, for any $n\geq 2$ we have
\begin{align*}
\rho P_\mathcal{H}U_A^n e_m=\rho P_\mathcal{H} \phi_{f^n(m)}=\mathbf{0},& \qquad m\in \{1,2,3,4\},\\
\rho P_\mathcal{H}U_T^n e_m=\rho P_\mathcal{H} \phi_{g^n(m)}=\mathbf{0},& \qquad m\in \{1,2,3,4\}.
\end{align*}
Therefore,
\begin{align*}
\rho P_\mathcal{H}U_A^n h = A^n h, \qquad n\geq 1, \quad h\in \{e_1,e_2,e_3,e_4\},\\
\rho P_\mathcal{H}U_T^n h = T^n h, \qquad n\geq 1, \quad h\in \{e_1,e_2,e_3,e_4\}.
\end{align*}
Consequently, the same relations hold for arbitrary members of $\mathbb{C}^4$. Thus, $U_T$ and $U_A$ on $\mathcal{K}$ are unitary $\rho$-dilation of $T$ and $A$ respectively and $T, A\in \zeta_\rho$.

\medskip

Evidently, $A\perp_B T$, as $\|Ae_4\|=\rho=\|A\|$ and $\langle Ae_4,Te_4\rangle = 0$. Let $\mathbf{U}_A$ and $\mathbf{U}_T$ on a Hilbert space $\mathcal{K}$ be any unitary $\rho$-dilations of $T$ and $A$ respectively. Since $\|A\|=\rho$, by Theorem \ref{Orthogonality of rho dilations}, we have $\mathbf{U}_A\perp_B \mathbf{U}_T$. By the right-symmetry of $\mathbf{U}_T$, we also have $\mathbf{U}_T\perp_B \mathbf{U}_A$. However, $T \not\perp_B A$ as $M_T=\{\mu e_1:~|\mu|=1\}$ and $\langle Te_1, Ae_1\rangle =\rho^2$. \qed

\end{example}

\vspace{0.2cm}

\section{Dilation and approximate Birkhoff-James orthogonality} \label{sec:06}

\vspace{0.4cm}

\noindent Recall that for any two elements $x,y$ in a normed space $\mathbb X$ and for any $\varepsilon\in [0,1)$, $x$ is said to be $\varepsilon$-\textit{approximate Birkhoff-James orthogonal} to $y$, denoted by $x\perp_B^\varepsilon y$, if 
\begin{equation*}
\|x+\lambda y\|^2\geq \|x\|^2-2\varepsilon\|x\|\|\lambda y\|, \qquad \mathrm{for~all~scalars~}\lambda.    
\end{equation*}
It is evident that approximate orthogonality is homogeneous and for $\varepsilon=0$ it coincides with Birkhoff-James orthogonality. Moreover, if $x\perp_B^{\varepsilon_0} y$ for some $\varepsilon_0\in [0,1)$, then $x\perp_B^{\varepsilon} y$, for all $\epsilon \in (\varepsilon_0,1)$. A characterization of $\varepsilon$-approximate orthogonality of complex matrices was found in Theorem 4.2 \& Theorem 4.6 in \cite{Roy & Sain II}. Here we generalize this for operators on complex Hilbert spaces and this is one of the main results.

\begin{theorem}\label{Basic: II}
Let $\mathcal H$ be a Hilbert space and let $T,A\in \mathcal{B}(\mathcal{H})$. For any $\varepsilon\in [0,1)$, $T\perp_B^\varepsilon A$ if and only if there is a sequence of unit vectors $(x_n)_{n\in \mathbb{N}}$ in $\mathcal{H}$ such that 
\[
\lim_{n \to \infty} \|Tx_n\|= \|T\| \quad \mathrm{and} \quad \underset{n \to \infty}{\lim}|\langle Tx_n, Ax_n\rangle|< \varepsilon\|T\|\|A\|.\]
\end{theorem}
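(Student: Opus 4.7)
For the easy direction $(\Leftarrow)$, the plan is a direct expansion. Given a sequence $(x_n) \subseteq S_\mathcal{H}$ with $\|Tx_n\| \to \|T\|$ and $|\langle Tx_n, Ax_n\rangle| \to c$ for some $c \leq \varepsilon\|T\|\|A\|$, for every scalar $\lambda$ one has
\[
\|T + \lambda A\|^2 \geq \|(T+\lambda A)x_n\|^2 = \|Tx_n\|^2 + 2\mathrm{Re}(\overline{\lambda}\langle Tx_n, Ax_n\rangle) + |\lambda|^2\|Ax_n\|^2.
\]
Bounding the cross term below by $-2|\lambda||\langle Tx_n, Ax_n\rangle|$, dropping the last nonnegative term, and letting $n \to \infty$ yield $\|T + \lambda A\|^2 \geq \|T\|^2 - 2|\lambda|c \geq \|T\|^2 - 2\varepsilon\|T\|\|\lambda A\|$, which is the definition of $T \perp_B^\varepsilon A$.

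The forward direction $(\Rightarrow)$ is the substantive one, and I would argue by contrapositive. Introduce
\[
\mathcal{W}_{T,A} := \bigl\{\xi \in \mathbb{C} : \exists (x_n) \subseteq S_\mathcal{H}, \ \|Tx_n\| \to \|T\|, \ \langle Tx_n, Ax_n\rangle \to \xi\bigr\},
\]
a natural generalization of the maximal numerical range $\mathcal{W}(T)$ of Equation~(\ref{eqn:sec3-03}). By Bolzano--Weierstrass applied to any norming sequence of $T$, $\mathcal{W}_{T,A}$ is nonempty; it is bounded by $\|T\|\|A\|$, closed by a diagonal argument, and --- as discussed in the final paragraph --- convex. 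The nonexistence of the desired sequence is precisely $\mathcal{W}_{T,A} \cap \overline{B(0, \varepsilon\|T\|\|A\|)} = \varnothing$, and Hahn--Banach separation in $\mathbb{R}^2$ then yields $\theta \in [0, 2\pi)$ and $c_0 > \varepsilon\|T\|\|A\|$ with $\mathrm{Re}(e^{-i\theta}\xi) \geq c_0$ for every $\xi \in \mathcal{W}_{T,A}$. A subsequence argument upgrades this separation to a uniform bound: for some $\eta > 0$, every $x \in S_\mathcal{H}$ with $\|Tx\| \geq \|T\| - \eta$ satisfies $\mathrm{Re}(e^{-i\theta}\langle Tx, Ax\rangle) \geq (c_0 + \varepsilon\|T\|\|A\|)/2$.

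To conclude, I would select $\lambda_0 = -t_0 e^{i\theta}$ for a sufficiently small $t_0 > 0$ and, for every $x \in S_\mathcal{H}$, expand
\[
\|(T+\lambda_0 A)x\|^2 = \|Tx\|^2 - 2t_0 \mathrm{Re}(e^{-i\theta}\langle Tx, Ax\rangle) + t_0^2\|Ax\|^2.
\]
Splitting over $x$: if $\|Tx\| < \|T\|-\eta$, the deficit $\|T\|^2 - \|Tx\|^2 \geq 2\|T\|\eta - \eta^2$ absorbs the $O(t_0)$ and $O(t_0^2)$ errors; if $\|Tx\| \geq \|T\| - \eta$, the uniform lower bound on the cross term (which strictly exceeds $\varepsilon\|T\|\|A\|$) yields the inequality modulo $O(t_0^2)$. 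For $t_0$ small, both cases combine to give $\|(T+\lambda_0 A)x\|^2 \leq \|T\|^2 - 2\varepsilon t_0 \|T\|\|A\| - \xi_0$ with a uniform $\xi_0 > 0$. Taking the supremum over $x$ produces $\|T + \lambda_0 A\|^2 < \|T\|^2 - 2\varepsilon \|T\|\|\lambda_0 A\|$, contradicting $T \perp_B^\varepsilon A$.

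The main obstacle is the convexity of $\mathcal{W}_{T,A}$, a variant of Stampfli's theorem for the classical maximal numerical range \cite{Stampfli}. In finite dimensions, this reduces to the Toeplitz--Hausdorff theorem applied to $P_{\mathcal{H}_0} A^* T|_{\mathcal{H}_0}$, where $\mathcal{H}_0 = \ker(T^*T - \|T\|^2 I)$ is the top eigenspace of $T^*T$. In infinite dimensions, convexity follows by a perturbation argument that combines two norming sequences --- passing to asymptotically $T$-orthogonal subsequences --- without destroying the norming property, modelled on Stampfli's original proof.
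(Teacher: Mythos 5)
Your argument is correct in substance but takes a genuinely different route from the paper's. The paper disposes of the forward direction in a few lines by quoting two external results: Theorem 2.1 of \cite{Woijcik}, which produces $S\in \mathrm{span}\{T,A\}$ with $T\perp_B S$ and $\|A-S\|\leq \varepsilon\|A\|$, and then Theorem \ref{Bhatia-Semrl} applied to the exact orthogonality $T\perp_B S$; the triangle-inequality estimate $|\langle Tx_n, Ax_n\rangle|\leq \|T\|\|A-S\|+|\langle Tx_n, Sx_n\rangle|$ finishes it. You instead re-derive the quantitative content of both citations from scratch: you introduce the joint maximal numerical range $\mathcal{W}_{T,A}$, separate it from the disc of radius $\varepsilon\|T\|\|A\|$, and exhibit an explicit $\lambda_0$ violating the defining inequality. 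Your route is longer and hinges on one nontrivial lemma that you only sketch, namely the convexity of $\mathcal{W}_{T,A}$. That lemma is true (it is essentially how \cite{Magajna} and Theorem \ref{Bhatia-Semrl} are proved, via a Toeplitz--Hausdorff/Stampfli \cite{Stampfli} perturbation as you indicate), but without it your Hahn--Banach step genuinely fails: a non-convex compact set avoiding the disc need not lie in a half-plane, and then no single direction $\theta$ gives the uniform lower bound on $\mathrm{Re}\left(e^{-i\theta}\langle Tx, Ax\rangle\right)$ that your final two-case estimate requires. So the convexity sketch carries the real mathematical weight of your proof; granting it, the compactness of $\mathcal{W}_{T,A}$, the subsequence upgrade to a uniform bound over near-norming vectors, and the small-$t_0$ estimates are all sound. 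What your approach buys is self-containedness and geometric transparency; what the paper's buys is brevity and modularity.

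One remark applying to both proofs: neither establishes the strict inequality $\lim_n|\langle Tx_n, Ax_n\rangle|<\varepsilon\|T\|\|A\|$ as printed. The paper's forward direction ends with ``$\leq$'', and your contrapositive negates the ``$\leq$'' version (you intersect $\mathcal{W}_{T,A}$ with the \emph{closed} ball). In fact the strict version is false: for $\varepsilon=0$ it would demand $|\langle Tx_n, Ax_n\rangle|<0$, and for $\varepsilon>0$ the pair $T=I_2$, $A=\mathrm{diag}(\varepsilon b, b)$ with $0<\varepsilon b<b\leq 1$ satisfies $T\perp_B^\varepsilon A$ while every such limit equals at least $\varepsilon b=\varepsilon\|T\|\|A\|$. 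The inequality in the statement should read ``$\leq$'', and with that correction your proof, like the paper's, proves the right theorem.
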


\begin{proof}
Since $T\perp_B^\varepsilon A$, it follows from Theorem 2.1 in \cite{Woijcik} (which gives a characterization of approximate orthogonality in a Banach space) that there exists $S\in \mathrm{span}\{T,A\}$ such that 
\[
T\perp_B S\quad \mathrm{and} \quad \|A-S\|\leq \varepsilon \|A\|.
\]
Consequently, by Theorem \ref{Bhatia-Semrl}, we can find a norming sequence $(x_n)$ of $T$ such that
\[
\lim_{n\to \infty} \langle Tx_n, Sx_n\rangle =0.\]
Without loss of generality let us assume that the sequence $\left(|\langle Tx_n, Ax_n\rangle|\right)_{n\in \mathbb{N}}$ is convergent, because, otherwise we will consider a suitable subsequence $(x_{n_j})$ of $(x_n)$ for which $\left(|\langle Tx_{n_j}, Ax_{n_j}\rangle|\right)_{j\in \mathbb{N}}$ is convergent. Now,
\begin{align*}
|\langle Tx_n, Ax_n\rangle| & \leq |\langle Tx_n, (A-S)x_n\rangle|+|\langle Tx_n, Sx_n\rangle|\\
& \leq \|T\|\|(A-S)\|+|\langle Tx_n, Sx_n\rangle|.
\end{align*}
Taking limits on the both sides we have
\[
\lim_{n\to \infty} |\langle Tx_n, Ax_n\rangle|\leq \varepsilon\|T\|\|A\|.
\]
Conversely, suppose that there is a sequence of unit vectors $(x_n)$ in $\mathcal{H}$ satisfying the stated conditions. Again, without loss of generality we may assume that the sequence $(|\langle Tx_n, Ax_n \rangle|)_{n\in \mathbb{N}}$ is convergent. Thus, for any scalar $\lambda$ we have
\begin{align*}
\|T+\lambda A\|^2 & \geq \|Tx_n+\lambda Ax_n\|^2\\
& = \|Tx_n\|^2 + |\lambda|\|Ax_n\|^2 + 2~\mathfrak{Re}~\lambda \langle Tx_n, Ax_n \rangle\\
& \geq \|Tx_n\|^2 -2|\lambda||\langle Tx_n, Ax_n \rangle|\\
& \geq \lim_n \|Tx_n\|^2 - 2|\lambda|\lim_n|\langle Tx_n, Ax_n \rangle|\\
& \geq \|T\|^2 - 2 \varepsilon \|T\| \|\lambda A\|.
\end{align*}
This is the desired inequality (\ref{Definition of Approx. orthogonality}). This completes the proof.
\end{proof}

Now we are going to show that if $T \perp_B A$ for two operators $T,A$, then any unitary $\rho$-dilations $U_T$ of $T$ and $U_A$ of $A$ acting on a common space are $\varepsilon$-approximate Birkhoff-James orthogonal with a proper bound on $\varepsilon$. Also, we show that the bound is sharp. This is one of the main results of this article.

\begin{theorem}\label{Approximate orthogonality of dilation}
Let $T,A$ be two operators on a Hilbert space $\mathcal{H}$ and $\rho > 0$. Suppose that $T$ and $A$ are members of class $\zeta_\rho$ and possess unitary $\rho$-dilations on a common space $\mathcal{K}\supseteq \mathcal{H}$. Then $T\perp_B A$ implies that $U_T\perp_B^\varepsilon U_A$, for all unitary $\rho$-dilations $U_T$ of $T$ and $U_A$ of $A$ on $\mathcal{K}$ and for every $\varepsilon\in [\kappa,1)$, where
\begin{equation}
\kappa=\eta_0\left(\sqrt{1-\frac{\|T\|^2}{\rho^2}}\right),
\end{equation}
\[\eta_0=\min\{\eta_1,\eta_2\},\]
and
\begin{equation}\label{Bound I}
\eta_1=\sup\left\{\zeta\geq 0:~\lim_n\sqrt{1-\frac{\|Ax_n\|^2}{\rho^2}} = \zeta,~~\|x_n\|=1,~\lim_n\|Tx_n\| = \|T\|\right\},   
\end{equation}
\begin{equation}\label{Bound II}
\eta_2=\sup\left\{\zeta\geq 0:~\lim_n\sqrt{1-\frac{\|A^*x_n\|^2}{\rho^2}} = \zeta,~\|x_n\|=1,~\lim_n\|T^*x_n\| = \|T^*\|\right\}.   
\end{equation}
Moreover, $U_T\perp_B U_A$ if $\|T\|=\rho$. In addition, if $\|A\|=\rho$ and if every norming sequence of $T$ $($or $T^*)$ is also a norming sequence of $A ($or $A^*)$, then $U_T\perp_B U_A$. 

\end{theorem}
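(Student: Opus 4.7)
The plan is to apply Theorem~\ref{Basic: II} to the pair $U_T, U_A$: because every unit vector in $\mathcal K$ is automatically a norming vector for the unitary $U_T$, the criterion reduces to exhibiting a sequence $(z_n) \subset S_{\mathcal{K}}$ with $\lim_n |\langle U_T z_n, U_A z_n \rangle|$ bounded by $\kappa$ (recall $\|U_T\|\|U_A\| = 1$). A limit argument in the defining inequality of approximate orthogonality will then cover the endpoint $\varepsilon = \kappa$ as well as every $\varepsilon \in (\kappa, 1)$.

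First I would work with $T \perp_B A$ directly. By Theorem~\ref{Bhatia-Semrl} there is a norming sequence $(y_n) \subset \mathcal H$ of $T$ with $\langle T y_n, A y_n \rangle \to 0$; after passing to a subsequence we may assume $\|A y_n\|$ converges. For each $y_n \in \mathcal H$ the dilation identity $T = \rho P_\mathcal H U_T|_\mathcal H$ yields the orthogonal decomposition $\rho U_T y_n = T y_n + w_n$ with $w_n \in \mathcal H^\perp$ and $\|w_n\|^2 = \rho^2 - \|T y_n\|^2$, and likewise $\rho U_A y_n = A y_n + v_n$ with $\|v_n\|^2 = \rho^2 - \|A y_n\|^2$. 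Since $T y_n, A y_n \in \mathcal H$ are orthogonal to $w_n, v_n \in \mathcal H^\perp$, cross-terms vanish and
\[
\rho^2 \langle U_T y_n, U_A y_n \rangle = \langle T y_n, A y_n \rangle + \langle w_n, v_n \rangle.
\]
Cauchy-Schwarz applied to $\langle w_n, v_n \rangle$, together with the supremum definition of $\eta_1$, then gives
\[
\lim_n |\langle U_T y_n, U_A y_n \rangle| \leq \sqrt{1 - \|T\|^2/\rho^2}\, \lim_n \sqrt{1 - \|A y_n\|^2/\rho^2} \leq \eta_1 \sqrt{1 - \|T\|^2/\rho^2}.
\]

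To bring $\eta_2$ into play I would pass to adjoints. The elementary identity $\|T + \lambda A\| = \|T^* + \bar \lambda A^*\|$ shows simultaneously that $T \perp_B A \iff T^* \perp_B A^*$ and that $U_T \perp_B^\varepsilon U_A \iff U_T^* \perp_B^\varepsilon U_A^*$. Moreover $U_T^*$ and $U_A^*$ are unitary $\rho$-dilations of $T^*$ and $A^*$, since taking adjoints of $T^n = \rho P_\mathcal H U_T^n|_\mathcal H$ preserves the dilation identity. Repeating the previous estimate verbatim with $T, A, U_T, U_A$ replaced by $T^*, A^*, U_T^*, U_A^*$, starting from a norming sequence of $T^*$ extracted from $T^* \perp_B A^*$, yields the bound $\eta_2 \sqrt{1 - \|T\|^2/\rho^2}$ for $\lim_n |\langle U_T^* z_n, U_A^* z_n \rangle|$, and hence the same $\varepsilon$-threshold for $U_T \perp_B^\varepsilon U_A$. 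Taking the minimum of the two thresholds produces $\kappa$ as required.

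The two supplementary claims are then immediate. If $\|T\| = \rho$ the prefactor $\sqrt{1 - \|T\|^2/\rho^2}$ vanishes, so $\kappa = 0$ and $U_T \perp_B U_A$, recovering Theorem~\ref{Orthogonality of rho dilations}. If $\|A\| = \rho$ and every norming sequence of $T$ (respectively $T^*$) is also a norming sequence of $A$ (respectively $A^*$), then along any admissible sequence $\|A y_n\| \to \rho$ (respectively $\|A^* z_n\| \to \rho$), forcing $\eta_1 = 0$ (respectively $\eta_2 = 0$), hence $\eta_0 = 0$ and $\kappa = 0$. The principal bookkeeping hurdle I foresee is the careful verification of the adjoint equivalences for the approximate relation and confirming that the Cauchy-Schwarz step really connects cleanly to the supremum-defined $\eta_1$ and $\eta_2$ after extracting the right convergent subsequences.
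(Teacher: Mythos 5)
Your proposal is correct and follows essentially the same route as the paper: a norming sequence of $T$ witnessing $T\perp_B A$, the orthogonal splitting of $\rho U_T y_n$ and $\rho U_A y_n$ along $\mathcal H$ and $\mathcal H^\perp$ with Cauchy--Schwarz on the $\mathcal H^\perp$ components to reach $\eta_1\sqrt{1-\|T\|^2/\rho^2}$, the passage to adjoints for $\eta_2$, and Theorem~\ref{Basic: II} to conclude. The two supplementary claims are handled exactly as in the paper.
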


\begin{proof}
As usual, since $T\perp_B A$, there exists a norming sequence $(x_n)_{n\in \mathbb{N}}$ of $T$ such that 
\[
\lim_{n\to \infty } \langle Tx_n, Ax_n\rangle_\mathcal{H} =0.
\] 
We now complete the proof in the following two steps.

\medskip

\noindent \emph{Step I}: Note that
\begin{align*}
\left|\langle U_T x_n, U_A x_n\rangle_{\mathcal{K}}\right| & =  \left|\left\langle P_{\mathcal{H}}U_T x_n + P_{\mathcal{H}^\perp}U_T x_n, P_{\mathcal{H}}U_Ax_n+P_{\mathcal{H}^\perp}U_A x_n \right\rangle_{\mathcal{K}}\right|\\
& \leq  \left|\left\langle P_{\mathcal{H}}U_T x_n, P_{\mathcal{H}}U_Ax_n\right\rangle_\mathcal{K}\right|+\left|\left\langle P_{\mathcal{H}^\perp}U_T x_n, P_{\mathcal{H}^\perp}U_Ax_n\right\rangle_\mathcal{K}\right|\\
& \leq \frac{1}{\rho^2}\left|\left\langle T x_n, Ax_n\right\rangle_\mathcal{H}\right| + \left\|P_{\mathcal{H}^\perp}U_T x_n\right\|\left\|P_{\mathcal{H}^\perp}U_A x_n\right\|\\
& = \frac{1}{\rho^2}\left|\left\langle T x_n, Ax_n\right\rangle_\mathcal{H}\right| + \left\|U_Tx_n -P_{\mathcal{H}}U_T x_n\right\|\left\|U_A x_n-P_{\mathcal{H}}U_A x_n\right\|\\
& = \frac{1}{\rho^2}\left|\left\langle T x_n, Ax_n\right\rangle_\mathcal{H}\right| + \left\|U_Tx_n -\frac{1}{\rho}T x_n\right\|\left\|U_A x_n-\frac{1}{\rho}A x_n\right\|\\
& =\frac{1}{\rho^2} \left|\left\langle T x_n, Ax_n\right\rangle_\mathcal{H}\right| + \sqrt{1-\frac{\|Tx_n\|^2}{\rho^2}}\sqrt{1-\frac{\|Ax_n\|^2}{\rho^2}},
\end{align*}
where the last equality follows from the fact that
\begin{align*}
\left\|U_T x_n-\frac{1}{\rho}T x_n\right\|^2 & = 1-\frac{2}{\rho}~\mathfrak{Re}~\langle U_Tx_n, T x_n \rangle_\mathcal{K}+\frac{1}{\rho^2}\|Tx_n\|^2\\
& = 1-\frac{2}{\rho}~\mathfrak{Re}~\langle P_\mathcal{H}U_Tx_n, T x_n \rangle_\mathcal{K}+\frac{1}{\rho^2}\|Tx_n\|^2\\
& = 1-\frac{2}{\rho^2}\|Tx_n\|^2+\frac{1}{\rho^2}\|Tx_n\|^2\\
& = 1-\frac{1}{\rho^2}\|Tx_n\|^2,
\end{align*}
and similarly,
\[\left\|U_A x_n-\frac{1}{\rho}A x_n\right\|^2= 1-\frac{1}{\rho^2}\|Ax_n\|^2.\]
Passing through a suitable subsequence, if necessary, we may assume that the two sequences $\left(\left|\langle U_T x_n, U_A x_n\rangle_{\mathcal{K}}\right|\right)$ and $\left(\sqrt{1-\frac{\|Ax_n\|^2}{\rho^2}}\right)$ are convergent. Consequently,
\begin{align*}
\lim_{n\to \infty }\left|\langle U_T x_n, U_A x_n\rangle_{\mathcal{K}}\right| & \leq \lim_{n\to \infty} \left|\left\langle T x_n, Ax_n\right\rangle_\mathcal{H}\right| + \lim_{n\to \infty}\sqrt{1-\frac{\|Tx_n\|^2}{\rho^2}}\sqrt{1-\frac{\|Ax_n\|^2}{\rho^2}}\\
& \leq \eta_1\sqrt{1-\frac{\|T\|^2}{\rho^2}}, \qquad [\mathrm{by} ~(\ref{Bound I})].
\end{align*}
Since $\|U_Tx_n\|=1$ for every natural number $n$, it follows from Theorem \ref{Basic: II} that 
\[U_T\perp_B^{\varepsilon'} U_A, \quad \mathrm{for~every} \; \;  \varepsilon'  \text{ satisfying } \; ~\eta_1\sqrt{1-\frac{\|T\|^2}{\rho^2}}\leq \varepsilon' < 1.\]

\medskip

\noindent \emph{Step II:} Note that the class $\zeta_\rho$ is $\ast$-closed and $U_T^*$ and $U_A^*$ are unitary $\rho$-dilations of $T^*$ and $A^*$ on $\mathcal{K}$, respectively. Since $T\perp_B A$, we also have $T^*\perp_B A^*$. Thus, there exists a norming sequence $(y_n)$ of $T^*$ such that $\lim_n\langle T^* y_n, A^*y_n\rangle_\mathcal{H}=0.$ Now, proceeding similarly as in \emph{Step I} and using (\ref{Bound II}), we have that
\[U_T^*\perp_B^{{\varepsilon''}} U_A^*, \quad \mathrm{for~every~} \varepsilon'' \text{ satisfying }\; \eta_2\sqrt{1-\frac{\|T\|^2}{\rho^2}}\leq \varepsilon'' <1.\]
Consequently, for any scalar $\lambda$
\[\|U_T+\lambda U_A\|^2  = \|U_T^*+\overline{\lambda} U_A^*\|^2\geq \|U_T^*\|^2-2{\varepsilon''} \|U_T^*\|\|\lambda U_A^*\|=\|U_T\|^2-2{\varepsilon''} \|U_T\|\|\lambda U_A\|.\]
This is same as saying that
\[
U_T\perp_B^{{\varepsilon''}} U_A, \quad \mathrm{for~every~} \varepsilon'' \text{ satisfying }\;  \eta_2\sqrt{1-\frac{\|T\|^2}{\rho^2}}\leq \varepsilon'' <1.
\]
Therefore, $U_T\perp_B^\kappa U_A$, where 
\[\kappa=\min\{\varepsilon', \varepsilon''\} = \eta_0\left(\sqrt{1-\frac{\|T\|^2}{\rho^2}}\right),
\]
and $\eta_0=\min\{\eta_1, \eta_2\}$. Consequently, $U_T\perp_B^\varepsilon U_A$ for every $\varepsilon\in [\kappa,1)$.

\medskip

If $\|T\|=\rho$, then $\sqrt{1-\frac{\|T\|^2}{\rho^2}} = 0$. Thus, $U_T\perp_B U_A$. In addition, if $\|A\|=\rho$ and every norming sequence of $T$ (or $T^*$) is also a norming sequence of $A$ (or $A^*$), then again $\eta_0=0$, and we have $U_T\perp_B U_A$. This completes the proof.

\end{proof}

The following is an obvious corollary of the preceding theorem which deals with $\varepsilon$-approximate Birkhoff-James orthogonality of isometric dilations of a pair of contractions.

\begin{cor}
Let $T$ and $A$ be two contractions acting on a Hilbert space $\mathcal{H}$. Suppose $T$ and $A$ admit isometric dilations $V_T$ and $V_A$ respectively on a common space $\mathcal{K}\supseteq \mathcal{H}$. Then $T\perp_B A$ implies that $V_T\perp_B^\varepsilon V_A$, for every $\varepsilon\in [\gamma,1)$, where 
\[\gamma = \delta_0\left(\sqrt{1-{\|T\|^2}}\right),\]
\[\delta_0=\sup \left\{\sigma\geq 0:~\sqrt{1-{\|Ax_n\|^2}}\to \sigma,~\|x_n\|=1,~\|Tx_n\|\to \|T\|\right\}.\]
Moreover, if $\|A\|=1$ and every norming sequence of $T$ is also a norming sequence of $A$, then $V_T\perp_B V_A$. 
\end{cor}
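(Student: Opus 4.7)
The plan is to replicate \emph{Step I} from the proof of Theorem \ref{Approximate orthogonality of dilation}, specialized to $\rho=1$ with unitary dilations replaced by isometric dilations. The key ingredient that survives verbatim is the identity $P_{\mathcal{H}} V_T|_{\mathcal{H}} = T$, which for any unit vector $x \in \mathcal{H}$ yields
\[
\|V_T x - Tx\|^2 = \|V_T x\|^2 - 2\,\mathfrak{Re}\langle V_T x, P_{\mathcal{H}}V_T x\rangle + \|Tx\|^2 = 1 - \|Tx\|^2,
\]
and the analogous identity for $V_A$.

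Using $T\perp_B A$ and Theorem \ref{Bhatia-Semrl}, I would first pick a norming sequence $(x_n) \subseteq S_{\mathcal{H}}$ of $T$ with $\langle T x_n, A x_n\rangle \to 0$. After passing to a subsequence, assume $\sqrt{1 - \|A x_n\|^2}\to \sigma$ for some $\sigma \in [0, \delta_0]$. Splitting
\[
\langle V_T x_n, V_A x_n\rangle = \langle P_{\mathcal{H}} V_T x_n, P_{\mathcal{H}} V_A x_n\rangle + \langle P_{\mathcal{H}^\perp} V_T x_n, P_{\mathcal{H}^\perp} V_A x_n\rangle = \langle T x_n, A x_n\rangle + \langle P_{\mathcal{H}^\perp} V_T x_n, P_{\mathcal{H}^\perp} V_A x_n\rangle
\]
and applying Cauchy--Schwarz on the second term gives
\[
|\langle V_T x_n, V_A x_n\rangle| \leq |\langle T x_n, A x_n\rangle| + \sqrt{1 - \|Tx_n\|^2}\sqrt{1-\|Ax_n\|^2}.
\]
Taking limits yields $\lim_n |\langle V_T x_n, V_A x_n\rangle| \leq \delta_0 \sqrt{1 - \|T\|^2} = \gamma$. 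Since $(x_n)$ is automatically a norming sequence for the isometry $V_T$ and $\|V_T\|=\|V_A\|=1$, Theorem \ref{Basic: II} (together with the closedness of the defining inequality of $\perp_B^{\varepsilon}$ in the parameter $\varepsilon$) gives $V_T \perp_B^\varepsilon V_A$ for every $\varepsilon \in [\gamma, 1)$. The last assertion is then immediate: if $\|A\|=1$ and every norming sequence of $T$ is also one of $A$, then $\|Ax_n\|\to 1$ along the chosen sequence, hence $\sigma = 0$ and $\delta_0 = \gamma = 0$, so $V_T \perp_B V_A$.

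The only conceptual subtlety worth flagging -- and the reason the proof is not verbatim that of Theorem \ref{Approximate orthogonality of dilation} -- is that \emph{Step II} of the parent theorem cannot be reproduced in the isometric setting: $V_T^*$ need not be an isometric dilation of $T^*$, so there is no analogue of the quantity $\eta_2$. This is not an obstacle to the corollary as stated, which involves only the $\eta_1$-type bound $\delta_0$; it simply explains why the final bound is asymmetric between $T$ and $T^*$ in the isometric case while it is symmetric in the unitary case.
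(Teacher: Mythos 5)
Your proof is correct and is essentially the paper's own argument: the paper's proof of this corollary is literally the one-line instruction to set $\rho=1$ in \emph{Step I} of Theorem \ref{Approximate orthogonality of dilation}, which is exactly what you carry out. Your closing remark explaining why \emph{Step II} has no isometric analogue (since $V_T^*$ need not dilate $T^*$) is a correct and worthwhile observation that the paper leaves implicit.
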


\begin{proof}
The proof follows by replacing $\rho=1$ in \emph{Step I} in the proof of Theorem \ref{Approximate orthogonality of dilation}.
\end{proof}

The bound that we obtained for $\varepsilon$ in Theorem \ref{Approximate orthogonality of dilation} is universally optimal. The following example illustrates this.

\begin{example}
Let $T$ and $A$ be the contractions as in Example \ref{Example: I}. Note that $T$ and $A$ are members of $\zeta_1$. Let $\widetilde{U_T}$ and $\widetilde{U_A}$ be the Sch\"{a}ffer dilations as in Equation-(\ref{eqn:uni-dil}) of $T$ and $A$ respectively on the space $\mathcal{K}=\bigoplus_{-\infty}^{\infty}\mathcal{H}$. Then from Example \ref{Example: I}, it is easy to see that
\begin{equation}\label{bound for epsilon}
\left|\left\langle \widetilde{U_T}\mathbf{x}, \widetilde{U_A}\mathbf{x}\right\rangle_\mathcal{K}\right| \geq \frac{1}{2}, \qquad \|\mathbf{x}\|=1,~\mathbf{x}\in \mathcal{K}.    
\end{equation}
However, the lower bound $\dfrac{1}{2}$ is attained, if we consider $\mathbf{x}=(\dots, {\bf 0},{\bf 0},{\bf 0}, \dots, (1,0), {\bf 0},{\bf 0},{\bf 0} \dots)$, where the vector $(1,0)$ is situated in the zeroth co-ordinate. Being consistent with the notations of Theorem \ref{Approximate orthogonality of dilation}, we have $\eta_0=\dfrac{1}{\sqrt{2}}$ and $\kappa=\frac{1}{2}$. Therefore, $\widetilde{U_T}\perp_B^\varepsilon \widetilde{U_A}$ for all $\varepsilon\in [\frac{1}{2}, 1)$, and by (\ref{bound for epsilon}), we have $\widetilde{U_T}\not\perp_B^\varepsilon \widetilde{U_A}$, for any $\varepsilon$ less than $ \frac{1}{2}$. \qed

\end{example}

\vspace{0.3cm} 

\section{Generalized Sch\"{a}ffer unitary dilations and Birkhoff-James orthogonality} \label{sec:07}

\vspace{0.4cm}

\noindent In Sections \ref{sec:03} \& \ref{sec:05}, we have seen that the Sch\"{a}ffer dilations as in Equation-(\ref{eqn:uni-dil}) for two orthogonal contractions $T, A$ may or may not be orthogonal. In this Section, we furnish two different explicit Sch\"{a}ffer-type unitary dilations $\mathbb U_T$ and $\widehat{U_T}$ for a contraction $T$. Interestingly, for any contractions $T,A$ we must have $\mathbb U_T \perp_B \mathbb U_A$ even if $T$ and $A$ are not orthogonal. On the other hand, $\widehat{U_T} \perp_B \widehat{U_A}$ when $T \perp_B A$, i.e. the dilations of the type $\widehat{U_T}$ are orthogonality preserver.

\subsection{The first construction} 

\begin{theorem}\label{Force and Force}
Let $T$ be a contraction acting on a Hilbert space $\mathcal{H}$. Let $U_1$ and $U_2$ be any two unitary operators on $\mathcal{H}$. Also, let $(Y_{-n})_{n\in \mathbb{N}}$ and $(X_n)_{n\in \mathbb{N}}$ be any two sequences of unitary operators on $\mathcal{H}$.  Then the operator $\mathbb{U}_T$ on ${ \displaystyle \mathcal{K}=\bigoplus_{-\infty}^\infty \mathcal{H}}$, defined by the block matrix 
\begin{equation*} 
	\mathbb{U}_T =\left[
\begin{array}{ c c c c|c|c c c c}
\bm{\ddots}&\vdots &\vdots&\vdots   &\vdots  &\vdots& \vdots&\vdots&\vdots\\
\cdots&0&\mathrm{Y_{-2}}&0  &0&  0&0&0&\cdots\\
\cdots&0&0&\mathrm{Y_{-1}}  &0&  0&0&0&\cdots\\
\cdots&0&0&0  &\mathrm{U_2D_T}&  \mathrm{-U_2T^*U_1}&0&0&\cdots\\ \hline

\cdots&0&0&0   &\mathrm{T}&   \mathrm{D_{T^*}U_1}&0&0&\cdots\\ \hline

\cdots&0&0&0   &0&  0& \mathrm{X_1}&0&\cdots\\
\cdots&0&0&0   &0&  0&0&\mathrm{X_2}&\cdots\\
\cdots&0&0&0  &0&   0& 0&0&\cdots\\
\vdots&\vdots&\vdots&\vdots&\vdots&\vdots&\vdots&\vdots&\bm{\ddots}\\
\end{array} \right],
	\end{equation*}
is a unitary dilation of $T$.

\end{theorem}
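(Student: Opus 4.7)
I would prove this theorem in two parts: (i) $\mathbb{U}_T$ is unitary on $\mathcal{K}$, and (ii) $P_\mathcal{H}\mathbb{U}_T^n|_\mathcal{H} = T^n$ for every $n\geq 0$, where $\mathcal{H}$ is embedded in $\mathcal{K}$ as the $0$-th summand---the column of the generalized Halmos block containing the entry $T$. Adopting the indexing in which the Halmos block occupies rows $\{-1,0\}$ and columns $\{0,1\}$ (consistent with the display in Equation~(\ref{eqn:uni-dil})), the $T$-entry sits at row $0$, column $0$.

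For part (i), my plan is to exhibit $\mathbb{U}_T$ as a product of three unitaries on $\mathcal{K}$. Let $\widetilde{U_T}$ denote the classical Sch\"{a}ffer dilation of Equation~(\ref{eqn:uni-dil}), which is already known to be unitary, and define block-diagonal operators $V, W \in \mathcal{B}(\mathcal{K})$ by
\[
V_n = \begin{cases} Y_{n+1}, & n\leq -2,\\ U_2, & n=-1,\\ I, & n\geq 0,\end{cases}\qquad W_n = \begin{cases} I, & n\leq 0,\\ U_1, & n=1,\\ X_{n-1}, & n\geq 2.\end{cases}
\]
Each diagonal block of $V$ and $W$ is a unitary on $\mathcal{H}$, so both $V$ and $W$ are unitary on $\mathcal{K}$. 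A direct block-by-block comparison then confirms $\mathbb{U}_T = V\widetilde{U_T}W$, which makes $\mathbb{U}_T$ unitary. Equivalently, one sees unitarity of the central generalized Halmos block from the factorisation $\mathrm{diag}(U_2, I)\,\mathsf{T}_U\,\mathrm{diag}(I, U_1)$ (a product of unitaries, the middle factor being the classical Halmos block), while every off-central column contains exactly one nonzero entry, a unitary $Y_{-n}$ or $X_n$, supported on a row index not shared by any other column.

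For part (ii), I will prove by induction on $n$ the stronger claim that for every $h\in\mathcal{H}$ (regarded as the element of $\mathcal{K}$ with $h$ in the $0$-th slot and zero elsewhere),
\[
(\mathbb{U}_T^n h)_0 = T^n h \quad\text{and}\quad (\mathbb{U}_T^n h)_j = 0 \text{ for all } j\geq 1.
\]
The base case $n=0$ is immediate. The key structural fact, read off from the matrix, is that $\mathbb{U}_T(i,k)=0$ whenever $i\geq 1$ and $k\leq 0$, because the Halmos rows are only $-1$ and $0$, and the lower-shift entries have the form $(i,i+1)=X_i$ with $i+1\geq 2$. Combined with the inductive hypothesis $(\mathbb{U}_T^n h)_k=0$ for $k\geq 1$, this forces $(\mathbb{U}_T^{n+1}h)_j=0$ for all $j\geq 1$. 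For the $0$-th coordinate, row $0$ of $\mathbb{U}_T$ has nonzero entries only at columns $0$ and $1$, so
\[
(\mathbb{U}_T^{n+1}h)_0 = T\,(\mathbb{U}_T^n h)_0 + D_{T^*}U_1\,(\mathbb{U}_T^n h)_1 = T\cdot T^n h + 0 = T^{n+1}h,
\]
closing the induction.

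The main obstacle I anticipate is purely notational: keeping the indexing convention for the doubly-infinite block matrix consistent and matching the subscripts of $Y_{-n}, X_n$ with the off-diagonal entries in the upper and lower shift portions. The operator-theoretic content is minimal---only the classical defect identities $D_T^2+T^*T = I$, $D_{T^*}^2+TT^*=I$, and $TD_T = D_{T^*}T$ are invoked, and they are already packaged into the unitarity of $\mathsf{T}_U$ carried over from the Sch\"{a}ffer construction.
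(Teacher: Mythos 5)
Your proposal is correct, and the unitarity part takes a genuinely different route from the paper. The paper proves that $\mathbb{U}_T$ is unitary by hand: it computes $\mathbb{U}_T\mathbf{h}$ coordinate by coordinate, checks norm preservation using the fact that the generalized Halmos block $\left[\begin{smallmatrix} U_2D_T & -U_2T^*U_1\\ T & D_{T^*}U_1\end{smallmatrix}\right]$ is unitary (via $TD_T=D_{T^*}T$ and the unitarity of $U_1,U_2$), and then establishes surjectivity by exhibiting an explicit preimage $\mathbf{h}''$ of an arbitrary $\mathbf{h}$. Your factorization $\mathbb{U}_T=V\widetilde{U_T}W$, with $V,W$ block-diagonal unitaries sandwiching the classical Sch\"{a}ffer dilation, reaches the same conclusion with less computation: I checked the block identity $V_i\,\widetilde{U_T}(i,j)\,W_j=\mathbb{U}_T(i,j)$ against your choices of $V_n$ and $W_n$ and it holds in every case ($i\leq -2$, $i=-1$, $i=0$, $i\geq 1$), so unitarity is immediate and the surjectivity step disappears. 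Your alternative remark, writing the central block as $\mathrm{diag}(U_2,I)\,\mathsf{T}_U\,\mathrm{diag}(I,U_1)$, is the same observation localized to the Halmos block and matches the spirit of the paper's computation. For the dilation identity $T^n=P_{\mathcal H}\mathbb{U}_T^n|_{\mathcal H}$, the paper simply declares it evident from the block matrix, whereas your induction --- showing $(\mathbb{U}_T^nh)_0=T^nh$ and $(\mathbb{U}_T^nh)_j=0$ for $j\geq 1$, using that rows of index $\geq 1$ are supported on columns of index $\geq 2$ and that row $0$ meets only columns $0$ and $1$ --- actually supplies the missing detail; the terms at $k\leq 0$ die by the structural fact and those at $k\geq 1$ die by the inductive hypothesis, so the argument closes correctly. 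In short, your proof is valid, somewhat slicker on unitarity, and more complete on the dilation property.
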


\begin{proof}

It is evident from the block matrix of $\mathbb U_T$ that
$
T^n= P_{\mathcal{H}}\mathbb{U}_T^n|_{\mathcal{H}}$ for all $n\in \mathbb{N}$ and consequently $\mathbb U_T$ dilates $T$. We now show that $\mathbb U_T$ is a unitary operator. Let $\mathbf{h},\mathbf{h}'\in \mathcal{K}$ be such that  $\mathbf{h}=(h_i)_{i\in \mathbb{Z}}$, $\mathbf{h}'=(h_i')_{i\in \mathbb{Z}}$ and $\mathbb{U}_T \mathbf{h}=\mathbf{h}'$. For any $i,j \in \mathbb Z$, suppose $\mathbb U_T(i,j)$ denotes the $(i,j)^{th}$ entry in the block matrix of $\mathbb U_T$. Then we have
\[
h_i'=\sum_{j} \mathbb{U}_T(i,j)h_j, \quad i\in \mathbb{Z}.
\]
Thus
\[
h'_i=\begin{cases}
Y_{i+1}h_{i+1}, & \mathrm{if}~i\leq -2,\\
U_2D_Th_0-U_2T^*U_1h_1, & \mathrm{if}~i=-1,\\
Th_0+D_{T^*}U_1h_1, & \mathrm{if}~i=0,\\
X_ih_{i+1}, & \mathrm{if}~i\geq 1.
\end{cases}
\]
Since $X_i$ and $Y_i$ are unitary operators, we have
\[
\sum\limits_{i\neq 0,-1}\|h_i'\|^2 =\sum\limits_{i\neq 0,1}\|h_i\|^2.
\]
Also, using the fact that $U_1,U_2$ are unitaries and $TD_T=D_{T^*}T$, a straightforward computation reveals that the $2\times 2$ Halmos block
\[
\begin{bmatrix}
    U_2D_T & -U_2T^*U_1\\
    T   & D_{T^*}U_1
\end{bmatrix}
\]
is a unitary operator. Therefore, $\|h_{-1}'\|^2+\|h_0'\|^2 =\|h_0\|^2+\|h_1\|^2$. This shows that
\[
\|\mathbf{h}'\|^2=\langle\mathbf{h}, \mathbf{h}\rangle_\mathcal{K}=\sum_i\sum_{j} \|\mathbb{U}_T(i,j)h_j\|^2=\langle U_T\mathbf{h}, U_T\mathbf{h}\rangle_\mathcal{K}.
\]
Thus, $\mathbb{U}_T$ is an isometry. It remains to show that $\mathbb{U}_T$ is surjective. Consider any $\mathbf{h}=(h_i)_{i\in \mathbb{Z}}$ in $\mathcal{K}$. Then it is straight-forward that $\mathbf{h}''=(h_i'')_{i\in \mathbb{Z}}$ is a preimage of $\mathbf{h}$ under $\mathbb{U}_T$, where
\[
h_i''=\begin{cases}
Y_i^*h_{i-1}, & \mathrm{if}~i\leq -1,\\
D_TU_2^* h_{-1}+T^*h_0, & \mathrm{if}~i=0,\\
-U_1^*TU_2^*h_{-1}+U_1^*D_{T^*}h_0, & \mathrm{if}~i=1,\\
X_{i-1}^*h_{i-1}, & \mathrm{if}~i\geq 2.
\end{cases}
\]
Hence $\mathbb{U}_T$ is a unitary dilation of $T$ on $\mathcal{K}$ and the proof is complete.

\end{proof}

For a pair of contractions $T,A$, we now choose the sequences $(Y_{-n}),\; (X_n)$ in such a way that $\mathbb U_T$ becomes orthogonal with $\mathbb U_A$. Let $(W_{-n})_{n\in \mathbb N}$, $(Y_{-n})_{n\in \mathbb N}$, $(X_n)_{n\in \mathbb N}$, $(Z_n)_{n\in \mathbb N}$ be sequences of unitary operators on $\mathcal{H}$. For any fixed negative integer $k_0$, we choose $W_{k_0}$ and $Y_{k_0}$ such that $W_{k_0}\perp_B Y_{k_0}$. Such a choice can always be ensured. For example, given any orthonormal basis $\{e_n:~n\in \mathbb{N}\}$ of $\mathcal{H}$, choose $W_{k_0}=I_\mathcal{H}$ and define $Y_{k_0}$ in the following way:
\[
Y_{k_0}e_i=\begin{cases}
e_2, & \mathrm{if} ~ i=1,\\
e_1, & \mathrm{if} ~ i=2,\\
e_i, & \mathrm{otherwise}.
\end{cases}
\]
Then $Y_{k_0}\perp_B W_{k_0}$, since $\langle Y_{k_0}e_1, W_{k_0}e_1\rangle_\mathcal{H}=0$. Consider the unitary dilations $\mathbb{U}_T$ and $\mathbb{U}_A$, as in Theorem \ref{Force and Force}, where 
\begin{align*}
\mathbb{U}_T(i,i+1)=W_{i+1},~\mathbb{U}_A(i,i+1)=Y_{i+1}, \quad \mathrm{for}~ i\leq -2,\\
\mathbb{U}_T(i,i+1)=X_i,~\mathbb{U}_A(i,i+1)=Z_i, \quad \mathrm{for}~  i\geq 1.
\end{align*}
Now, consider the unit vector $\mathbf{h}=(h_n)_{n\in \mathbb{Z}}$ in $\mathcal{K}$ such that $h_{k_0}=e_1$ and $h_n=\mathbf{0}$ for $n\neq k_0$. Then 
\[\langle \mathbb{U}_T \mathbf{h}, \mathbb{U}_A \mathbf{h} \rangle_{\mathcal{K}}=\langle Y_{k_0}e_1, W_{k_0}e_1\rangle_\mathcal{H}=0,
\]
and consequently $\mathbb{U}_T\perp_B \mathbb{U}_A$.

\subsection{The second construction}

\begin{theorem} \label{thm:cons2}
Let $T$ and $A$ be two contractions acting on a Hilbert space $\mathcal{H}$ such that $T \perp_B A$. Then there are unitary dilations $\widehat{U_T}$ of $T$ and $\widehat{U_A}$ of $A$ on some Hilbert space $\widehat{\mathcal{K}}\supseteq \mathcal{H}$ such that $\widehat{U_T}\perp_B \widehat{U_A}$.
\end{theorem}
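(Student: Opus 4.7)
The plan is to construct $\widehat{U_T}$ and $\widehat{U_A}$ by embedding the Sch\"{a}ffer dilations of $T$ and $A$ into a larger Hilbert space in which the two defect-space ``overflows'' lie in mutually orthogonal directions. Concretely, let $\mathcal M_T$ and $\mathcal M_A$ be two mutually orthogonal copies of $\bigoplus_{i\in \mathbb{Z}\setminus\{0\}}\mathcal{H}$ and set $\widehat{\mathcal K} = \mathcal H \oplus \mathcal M_T \oplus \mathcal M_A$. Then $\mathcal H \oplus \mathcal M_T$ is naturally isometrically isomorphic to the Sch\"{a}ffer ambient space $\mathcal K_T := \bigoplus_{i\in \mathbb{Z}}\mathcal{H}$ (with $\mathcal H$ playing the role of the zeroth summand), and similarly $\mathcal H \oplus \mathcal M_A \cong \mathcal K_A$. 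I would define $\widehat{U_T}$ to act as Sch\"{a}ffer's $\widetilde{U_T}$ from Equation-(\ref{eqn:uni-dil}) on the invariant summand $\mathcal H \oplus \mathcal M_T$ and as the identity on $\mathcal M_A$; symmetrically, $\widehat{U_A}$ would act as $\widetilde{U_A}$ on $\mathcal H \oplus \mathcal M_A$ and as the identity on $\mathcal M_T$.

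By construction both $\widehat{U_T}$ and $\widehat{U_A}$ are unitaries on $\widehat{\mathcal K}$, each being an orthogonal direct sum of two unitaries on invariant subspaces. Moreover, for every $h \in \mathcal H$ and $n \geq 0$ the orbit $\widehat{U_T}^n h$ coincides with $\widetilde{U_T}^n h \in \mathcal H \oplus \mathcal M_T$, whence $P_{\mathcal H}\widehat{U_T}^n|_{\mathcal H} = T^n$; the same computation gives $P_{\mathcal H}\widehat{U_A}^n|_{\mathcal H} = A^n$. Thus they are genuine unitary dilations of $T$ and $A$ on $\widehat{\mathcal K}$.

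The crucial observation is that for $h\in \mathcal H$, the image $\widehat{U_T} h = Th \oplus D_T h$ sits in $\mathcal H \oplus \mathcal M_T$ (with the defect part occupying the $i=-1$ coordinate of $\mathcal M_T$), while $\widehat{U_A} h = Ah \oplus D_A h$ sits in $\mathcal H \oplus \mathcal M_A$. Since $\mathcal M_T \perp \mathcal M_A$, the cross terms vanish and
\[
\left\langle \widehat{U_T} h,\, \widehat{U_A} h \right\rangle_{\widehat{\mathcal K}} = \langle Th,\, Ah\rangle_{\mathcal H}.
\]
Since $T \perp_B A$, Theorem \ref{Bhatia-Semrl} yields a norming sequence $(h_n) \subset \mathcal H$ of $T$ with $\langle Th_n, Ah_n\rangle \to 0$. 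Each $h_n$ is a unit vector of $\widehat{\mathcal K}$ and $\|\widehat{U_T} h_n\| = 1 = \|\widehat{U_T}\|$, so $(h_n)$ norms $\widehat{U_T}$; the displayed identity then gives $\langle \widehat{U_T} h_n, \widehat{U_A} h_n\rangle \to 0$, and another application of Theorem \ref{Bhatia-Semrl} yields $\widehat{U_T} \perp_B \widehat{U_A}$.

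The only real technical task is checking unitarity and the dilation property for $\widehat{U_T}$ and $\widehat{U_A}$; both reduce cleanly to the corresponding properties of the Sch\"{a}ffer dilation once the identifications $\mathcal H \oplus \mathcal M_T \cong \mathcal K_T$ and $\mathcal H \oplus \mathcal M_A \cong \mathcal K_A$ are made explicit. The orthogonality argument itself is essentially a one-line consequence of the orthogonality of $\mathcal M_T$ and $\mathcal M_A$ inside $\widehat{\mathcal K}$, so no serious obstacle arises beyond bookkeeping.
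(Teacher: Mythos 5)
Your proof is correct, but it takes a genuinely different and in fact simpler route than the paper. The paper first replaces $T$ and $A$ by the normal dilations $N_T=\|T\|U_{T/\|T\|}$ and $N_A=\|A\|U_{A/\|A\|}$ on a common space, invokes Theorem \ref{Orthogonality of rho dilations} (with $\|T/\|T\|\|=1$) to get $N_T\perp_B N_A$, and then builds Sch\"{a}ffer dilations of $N_T$ and a sign-modified Sch\"{a}ffer dilation of $N_A$ on $\bigoplus_{-\infty}^{\infty}\mathcal{K}$; the point of the normalization is that $D_{N_T}$ and $D_{N_A}$ become scalar multiples of the identity, so the defect cross-term $\langle D_{N_T}x_k,-D_{N_A}x_k\rangle=-\beta$ is a constant that can be cancelled by mixing in an auxiliary unit vector supported away from the Halmos block. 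You instead kill the cross-term outright by placing the two defect ``tails'' $\mathcal{M}_T$ and $\mathcal{M}_A$ in mutually orthogonal summands of $\widehat{\mathcal{K}}=\mathcal{H}\oplus\mathcal{M}_T\oplus\mathcal{M}_A$ and extending each Sch\"{a}ffer dilation by the identity on the other tail; then $\langle\widehat{U_T}h,\widehat{U_A}h\rangle=\langle Th,Ah\rangle$ for $h\in\mathcal{H}$, and since every unit-vector sequence norms a unitary, Theorem \ref{Bhatia-Semrl} finishes the argument. Your verification steps (unitarity and the dilation property reduce to those of $\widetilde{U_T}$ on the reducing summand $\mathcal{H}\oplus\mathcal{M}_T$; the $i=-1$ coordinate carries $D_Th$) are all accurate. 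What each approach buys: yours needs no normalization, no appeal to Theorem \ref{Orthogonality of rho dilations}, and actually only uses the weaker hypothesis that $\langle Th_n,Ah_n\rangle\to 0$ for some unit vectors $h_n$ (i.e.\ $0\in\overline{\mathcal{W}(A^*T)}$ suffices); the paper's construction, at the cost of more work, keeps both dilations as genuine Sch\"{a}ffer-type operators on the same coordinate lattice, in keeping with the theme of Section \ref{sec:07}, rather than dilations that act as the identity on a large orthogonal complement.
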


\begin{proof}

For any two contractions $T,A$ acting on a Hilbert space $\mathcal H$, we can always find a common space $\mathcal K \supseteq \mathcal H$ such that both $T$ and $A$ admit unitary dilation on $\mathcal K$. For example, we can choose ${ \displaystyle \mathcal K= \oplus_{-\infty}^{\infty} \mathcal H }$ and $\widetilde{U_T}, \widetilde{U_A}$ as in Equation-(\ref{eqn:uni-dil}) to be unitary dilations of $T$ and $A$ respectively.
Suppose $\frac{T}{\|T\|}$ and $\frac{A}{\|A\|}$ dilate to unitaries $U_{\frac{T}{\|T\|}}$ and $U_{\frac{A}{\|A\|}}$ respectively on $\mathcal{K} \supseteq \mathcal H$. Set
$N_T=\|T\|U_{\frac{T}{\|T\|}}$ and $N_A=\|A\|U_{\frac{A}{\|A\|}}$. Then evidently $N_T$ and $N_A$ are normal contractions on $\mathcal K$ and
\[
P_\mathcal{H}N_T^n|_\mathcal{H}=\|T\|^nP_\mathcal{H}U_\frac{T}{\|T\|}^n|_\mathcal{H}=\|T\|^n \left(\frac{T}{\|T\|}\right)^n=T^n, \qquad n\in \mathbb{N}\cup \{0 \}.
\]
Therefore, $N_T,N_A$ are normal dilations of $T$ and $A$ respectively on $\mathcal{K}$. Note that $\|N_T\|=\|T\|$ and $\|N_A\|=\|A\|$. Consider the defect operators $D_{N_T}$ and $D_{N_A}$ of $N_T$ and $N_A$, respectively, i.e.
$D_{N_T} = (I_\mathcal{K}-N_T^*N_T)^\frac{1}{2}$ and $D_{N_A} = (I_\mathcal{K}-N_A^*N_A)^\frac{1}{2}$. Let $\widehat{\mathcal{K}}=\bigoplus_{-\infty}^\infty \mathcal{K}$. Set two operators $\widehat{U_T}$ and $\widehat{U_A}$ on $\widehat{\mathcal{K}}$ in the following way:

\begin{equation*} 
	\widehat{U_T} =\left[
\begin{array}{ c c c c|c|c c c c}
\bm{\ddots}&\vdots &\vdots&\vdots   &\vdots  &\vdots& \vdots&\vdots&\vdots\\
\cdots&0&\mathrm{I}&0  &0&  0&0&0&\cdots\\
\cdots&0&0&\mathrm{I}  &0&  0&0&0&\cdots\\
\cdots&0&0&0  &\mathrm{D_{N_T}}&  \mathrm{-N_T^*}&0&0&\cdots\\ \hline

\cdots&0&0&0   &\mathrm{N_T}&   \mathrm{D_{N_T^*}}&0&0&\cdots\\ \hline

\cdots&0&0&0   &0&  0& \mathrm{I}&0&\cdots\\
\cdots&0&0&0   &0&  0&0&\mathrm{I}&\cdots\\
\cdots&0&0&0  &0&   0& 0&0&\cdots\\
\vdots&\vdots&\vdots&\vdots&\vdots&\vdots&\vdots&\vdots&\bm{\ddots}\\
\end{array} \right],
	\end{equation*}

\medskip

 \begin{equation*} 
	\widehat{U_A} =\left[
\begin{array}{ c c c c|c|c c c c}
\bm{\ddots}&\vdots &\vdots&\vdots   &\vdots  &\vdots& \vdots&\vdots&\vdots\\
\cdots&0&\mathrm{I}&0  &0&  0&0&0&\cdots\\
\cdots&0&0&\mathrm{I}  &0&  0&0&0&\cdots\\
\cdots&0&0&0  &\mathrm{-D_{N_A}}&  \mathrm{N_A^*}&0&0&\cdots\\ \hline

\cdots&0&0&0   &\mathrm{N_A}&   \mathrm{D_{N_A^*}}&0&0&\cdots\\ \hline

\cdots&0&0&0   &0&  0& \mathrm{I}&0&\cdots\\
\cdots&0&0&0   &0&  0&0&\mathrm{I}&\cdots\\
\cdots&0&0&0  &0&   0& 0&0&\cdots\\
\vdots&\vdots&\vdots&\vdots&\vdots&\vdots&\vdots&\vdots&\bm{\ddots}\\
\end{array} \right].
	\end{equation*}
Evidently $\widehat{U_T}$ is the Sch\"{a}ffer unitary dilation of $N_T$ as in Equation-(\ref{eqn:uni-dil}). Also, considering the $2 \times 2$ Halmos block of $A$ we see that
\[
\begin{bmatrix}
-D_{N_A} & N_A^*\\
N_A  & D_{N_A^*}
\end{bmatrix}=\begin{bmatrix}
-I_{\mathcal{K}} & \mathbf{0}\\
\mathbf{0}  & I_\mathcal{K}
\end{bmatrix}\begin{bmatrix}
D_{N_A} & -N_A^*\\
N_A  & D_{N_A^*}
\end{bmatrix}.
\]
Thus, $\begin{bmatrix}
D_{N_A} & -N_A^*\\
N_A  & D_{N_A^*}
\end{bmatrix}$
is also a unitary and consequently $\widehat{U_A}$ is a unitary dilation of $A$.\\

We now show that  $\widehat{U_T}\perp_B \widehat{U_A}$. Evidently, by the homogeneity of Birkhoff-James orthogonality, we have $\frac{T}{\|T\|}\perp_B \frac{A}{\|A\|}$. It now follows from Theorem \ref{Orthogonality of rho dilations} that $U_{\frac{T}{\|T\|}}\perp_B U_{\frac{A}{\|A\|}}.$ Again by virtue of homogeneity, we have
$N_T\perp_B N_A$. Therefore, there is a norming sequence say $(x_k)_{k\in \mathbb{N}}$ such that
\[
\lim_{k\to \infty} \langle N_Tx_k, N_Ax_k\rangle =0.
\]
We set $(1-\|T\|^2)^\frac{1}{2}(1-\|A\|^2)^\frac{1}{2}=\beta$. Then $\beta\in [0,1)$. Now, consider two sequences of unit vectors $(\mathbf{h}_k)_{k\in \mathbb{N}}$ and $(\mathbf{h'}_k)_{k\in \mathbb{N}}$ in $\widehat{\mathcal{K}}$ such that 
\[\mathbf{h}_k=\left(y_n^{(k)}\right)_{n\in \mathbb{Z}} \quad \mathrm{and} \quad \mathbf{h'}_k=\left(z_n^{(k)}\right)_{n\in \mathbb{Z}}, \qquad k\in \mathbb{N}\]
 where
\[y^{(k)}_0=y^{(k)}_1=\mathbf{0};\qquad z_n^{(k)}=\begin{cases} \mathbf{0}, & \mathrm{if}~n\neq 0;\\
x_k, & \mathrm{if}~n= 0.
\end{cases}\]
It is easy to see that
$\mathbf{h}_k\perp \mathbf{h}'_k$ for every $ k\in \mathbb{N}$. Thus, $\left(\widetilde{\mathbf{h}_k}\right)_{k\in \mathbb{N}}$ is a sequence of unit vectors, where
\[\widetilde{\mathbf{h}_k}=\sqrt{\dfrac{\beta}{1+\beta}}\mathbf{h}_k+\dfrac{1}{\sqrt{1+\beta}}\mathbf{h'}_k, \qquad k\in \mathbb{N}.\]
Now,
\begin{align*}
\left\langle \widehat{U_T}\widetilde{\mathbf{h}_k}, \widehat{U_A}\widetilde{\mathbf{h}_k}\right\rangle_{\widehat{\mathcal{K}}} & = \frac{1}{1+\beta}\langle N_T x_k, N_A x_k\rangle_\mathcal{K} + \frac{1}{1+\beta}\langle D_{N_T} x_k,- D_{N_A} x_k\rangle_\mathcal{K} + \frac{\beta}{1+\beta}\sum\limits_{n\in \mathbb{Z}}\left\|y_n^{(k)}\right\|^2\\
& = \frac{1}{1+\beta}\langle N_T x_k, N_A x_k\rangle_\mathcal{K}-\frac{1}{1+\beta}\left\langle D_{N_A}D_{N_T}x_k,x_k\right\rangle_\mathcal{K}+\frac{\beta}{1+\beta}\\
& = \frac{1}{1+\beta}\langle N_T x_k, N_A x_k\rangle_\mathcal{K}-\frac{\beta}{1+\beta}\|x_k\|^2+\frac{\beta}{1+\beta}\\
& =\frac{1}{1+\beta} \langle N_T x_k, N_A x_k\rangle_\mathcal{K},
\end{align*}
where the second last equality follows from the fact that 
\begin{align*}
(I_\mathcal{K}-N_A^*N_A)^\frac{1}{2}(I_\mathcal{K}-N_T^*N_T)^\frac{1}{2} & =\left(I_\mathcal{K}-\|A\|^2U_{\frac{A}{\|A\|}}^*U_{\frac{A}{\|A\|}}\right)^\frac{1}{2}\left(I_\mathcal{K}-\|T\|^2U_{\frac{T}{\|T\|}}^*U_{\frac{T}{\|T\|}}\right)^\frac{1}{2}\\
& = (1-\|A\|^2)^\frac{1}{2}I_\mathcal{K}(1-\|T\|^2)^\frac{1}{2}I_\mathcal{K}\\
& = \beta I_\mathcal{K}.
\end{align*}
Consequently, we have
\[
\lim_{k\to \infty} \left\langle \widehat{U_T}\widetilde{\mathbf{h}_k}, \widehat{U_A}\widetilde{\mathbf{h}_k}\right\rangle_{\widehat{\mathcal{K}}}=\frac{1}{1+\beta}\lim_{k\to \infty} \langle N_T x_k, N_A x_k\rangle_\mathcal{K}=0
\]
and hence $\widehat{U_T}\perp_B \widehat{U_A}$. This finishes the proof.

\end{proof}

\section{And\^{o} dilation, regular dilation and Birkhoff-James orthogonality} \label{sec:04}

\vspace{0.3cm}

\noindent In this Section, we consider a pair of contractions $T, ST$, where $S$ is a unitary, and study their orthogonality when $S$ commutes with $T$. In general, studying orthogonality of And\^{o} dilation of a pair of commuting contractions on a Hilbert space $\mathcal H$ is difficult because of the involvement of a unitary $G$ (see below and also CH-I of \cite{NFBK}) acting on $\mathcal H \oplus \mathcal H \oplus \mathcal H \oplus \mathcal H$. For this reason, we choose a particular class of commuting contractions of the form $(T,ST)$ and show that their And\^{o} dilations are orthogonal. Also, we prove the existence of orthogonal regular unitary dilation of a pair of commuting orthogonal contractions. We begin with a simple lemma.

\begin{Lemma}\label{Simple Lemma}
Let $T$ be a contraction and $S$ be a unitary acting on a Hilbert space $\mathcal{H}$. Then $D_T=D_{ST}$. 
\end{Lemma}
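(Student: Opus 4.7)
The plan is to unwind the definitions of the defect operators and exploit the fact that $S^*S = I_{\mathcal{H}}$ because $S$ is unitary. Recall from the standard theory (and as used throughout the paper, e.g. after Equation-(\ref{eqn:uni-dil})) that for any contraction $R$ on $\mathcal{H}$, the defect operator is $D_R = (I_{\mathcal{H}} - R^*R)^{1/2}$, the unique positive square root of the positive operator $I_{\mathcal{H}} - R^*R$.

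First I would observe that $ST$ is indeed a contraction, since $\|STx\| = \|Tx\| \le \|x\|$ for every $x \in \mathcal{H}$, because $S$ is unitary and hence isometric. Next I would compute
\[
(ST)^*(ST) = T^*S^*ST = T^*(S^*S)T = T^*T,
\]
where the middle equality uses $S^*S = I_{\mathcal{H}}$. Consequently
\[
I_{\mathcal{H}} - (ST)^*(ST) = I_{\mathcal{H}} - T^*T.
\]
Taking the (unique) positive square root of both sides yields $D_{ST} = D_T$, which is the assertion.

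The argument is entirely routine; there is no real obstacle, the only thing to mention is the uniqueness of the positive square root of a positive operator, which justifies passing from the equality of $I - (ST)^*(ST)$ and $I - T^*T$ to the equality of the two defect operators.
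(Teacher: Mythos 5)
Your proof is correct and follows essentially the same route as the paper: the paper's proof likewise computes $(ST)^*(ST)=T^*S^*ST=T^*T$ and reads off $D_{ST}=D_T$ from the definition of the defect operator. Your explicit remark about the uniqueness of the positive square root is a harmless extra justification that the paper leaves implicit.
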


\begin{proof}
Evidently $ST$ is a contraction and we have
\[
D_T=(I_\mathcal{H}-T^*T)^{\frac{1}{2}}=(I_\mathcal{H}-T^*S^*ST)^{\frac{1}{2}}=(I_\mathcal{H}-(ST)^*ST)^{\frac{1}{2}}=D_{ST}.
\]  
\end{proof}

In this context, let us recall the definition of isometric dilation of a pair of commuting contractions. Let $(T_1,T_2)$ be a pair of commuting contractions acting on a Hilbert space $\mathcal H$. A pair of commuting isometries $(V_1,V_2)$ on $\mathcal K \supseteq \mathcal H$ is said to be an \textit{isometric dilation} of $(T_1,T_2)$ if 
\[
T_1^{n_1}T_2^{n_2}=P_{\mathcal H} V_1^{n_1}V_2^{n_2}|_{\mathcal H} \quad \text{ for all }\, n_1,n_2 \in \mathbb N \cup \{0\}.
\]
Also, in a similar way a \textit{unitary dilation} of $(T_1,T_2)$ can be defined.

\begin{theorem}\label{Class of examples}
Let $T$ be a contraction acting on a Hilbert space $\mathcal{H}$. Let $S$ be a unitary on $\mathcal{H}$. Then the following hold:
\begin{itemize}
    \item[(i)] If there exists a sequence of unit vectors $(y_n)_{n\in \mathbb{N}}$ in $\mathcal{H}$ such that 
\[
\lim_{n\to \infty} (1-\|Ty_n\|^2+\langle Ty_n,STy_n\rangle) \leq 0,
\]
then $\widetilde{U_T}\perp_B \widetilde{U_{ST}}$, where $\widetilde{U_T}$ and $\widetilde{U_{ST}}$ on $\mathcal{K}=\bigoplus_{-\infty}^\infty \mathcal{H}$ are the Sch\"{a}ffer unitary dilations of $T$ and $ST$ respectively as in Equation-$(\ref{eqn:uni-dil})$.

\item[(ii)] If $S$ commutes with $T$ and $\mathcal{W}(S)$ contains a non-positive real number, then there exists a pair of unitary operators $(U_T, U_{ST})$ that dilates the pair $(T,ST)$ on some Hilbert space $\mathcal{K}\supseteq \mathcal{H}$ such that $U_T\perp_B U_{ST}$.
\end{itemize}
\end{theorem}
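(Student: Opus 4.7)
For part (i), my plan is to apply Corollary \ref{Corollary to Halmos Blocks}: it suffices to exhibit a non-positive real number in $\mathcal W(\mathsf{(ST)}_U^*\mathsf T_U)$. Using Lemma \ref{Simple Lemma} and the analogous identity $D_{(ST)^*}=D_{T^*}$ (which holds because $S$ is unitary and commutes with $TT^*$), the Halmos blocks simplify to
\[
\mathsf T_U=\begin{bmatrix} D_T & -T^* \\ T & D_{T^*}\end{bmatrix}, \qquad \mathsf{(ST)}_U=\begin{bmatrix} D_T & -T^*S^* \\ ST & D_{T^*}\end{bmatrix}.
\]
I then test against the unit vectors $(y_n,\mathbf{0})\in S_{\mathcal H\oplus\mathcal H}$ furnished by the hypothesis; a direct expansion gives
\[
\bigl\langle\mathsf T_U(y_n,\mathbf{0}),\,\mathsf{(ST)}_U(y_n,\mathbf{0})\bigr\rangle_{\mathcal H\oplus\mathcal H} = \|D_Ty_n\|^2+\langle Ty_n,STy_n\rangle = 1-\|Ty_n\|^2+\langle Ty_n,STy_n\rangle,
\]
whose limit is non-positive by hypothesis. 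Since $\mathsf{(ST)}_U^*\mathsf T_U$ is unitary on $\mathcal H\oplus\mathcal H$, every unit sequence is automatically norming for it, so this limit lies in $\mathcal W(\mathsf{(ST)}_U^*\mathsf T_U)$, and Corollary \ref{Corollary to Halmos Blocks} concludes $\widetilde{U_T}\perp_B\widetilde{U_{ST}}$.

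For part (ii), I would construct an explicit commuting joint unitary dilation of $(T,ST)$ on a doubled Sch\"{a}ffer space. Since $S$ is unitary and commutes with $T$, it also commutes with $T^*$, $D_T$, and $D_{T^*}$; hence the block-diagonal unitary $\tilde S:=\bigoplus S$ on $\mathcal K_0:=\bigoplus_{-\infty}^\infty\mathcal H$ commutes with the Sch\"{a}ffer unitary dilation $\widetilde{U_T}$ (verify entry-by-entry against (\ref{eqn:uni-dil})). On $\mathcal K:=\mathcal K_0\oplus\mathcal K_0$, identifying $\mathcal H$ with its canonical embedding in the first summand, set
\[
U_T := \widetilde{U_T}\oplus I_{\mathcal K_0}, \qquad U_{ST} := \tilde S\widetilde{U_T}\oplus(-\tilde S).
\]
Both are unitaries that commute (since $\tilde S$ commutes with $\widetilde{U_T}$), and using $ST=TS$ one verifies $P_\mathcal H U_T^{n_1}U_{ST}^{n_2}|_\mathcal H = S^{n_2}T^{n_1+n_2} = T^{n_1}(ST)^{n_2}$, so $(U_T,U_{ST})$ is indeed a joint unitary dilation of the pair. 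For orthogonality, the unitarity of $\widetilde{U_T}$ gives
\[
\|U_T+\lambda U_{ST}\| = \max\bigl(\|I+\lambda\tilde S\|,\,\|I-\lambda\tilde S\|\bigr),
\]
and the identity $\|(I+\lambda\tilde S)v\|^2+\|(I-\lambda\tilde S)v\|^2 = 2(1+|\lambda|^2)$ for any unit vector $v$ (using $\tilde S$ unitary) forces the right-hand side to be at least $1=\|U_T\|$, giving $U_T\perp_B U_{ST}$.

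The main obstacle I anticipate is pinning down the precise role of the hypothesis $\mathcal W(S)\cap(-\infty,0]\neq\emptyset$: the doubled-space construction above actually works for any unitary $S$ commuting with $T$, so a conceptually tighter proof would route through part (i), extracting from the hypothesis a sequence $(y_n)$ witnessing $\lim(1-\|Ty_n\|^2+\langle Ty_n,STy_n\rangle)\leq 0$ and then applying part (i) to the (minimal) Sch\"{a}ffer dilations directly. Carrying that alternative out appears to be delicate when $\|T\|$ is small relative to the witnessed real number in $\mathcal W(S)$, which is essentially why the $\oplus$-trick above succeeds in one stroke.
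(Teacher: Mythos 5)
Your part (i) is essentially the paper's own proof: both test the Halmos blocks against the unit vectors $(y_n,\mathbf{0})$, reduce the inner product to $1-\|Ty_n\|^2+\langle Ty_n,STy_n\rangle$ via $D_{ST}=D_T$ (Lemma \ref{Simple Lemma}), and invoke Corollary \ref{Corollary to Halmos Blocks}. One cosmetic slip: your displayed block $\mathsf{(ST)}_U$ has $D_{T^*}$ in the bottom-right corner, justified by ``$S$ commutes with $TT^*$'' --- but part (i) makes no commutativity assumption, so in general $D_{(ST)^*}=(I-STT^*S^*)^{1/2}\neq D_{T^*}$. This is harmless because your test vectors have second component $\mathbf{0}$, so that entry never enters the computation, but the parenthetical justification should be deleted.

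Your part (ii) is correct but takes a genuinely different route. The paper carries out And\^{o}'s explicit construction: it builds the isometries $W_T$, $W_{ST}$ and the interpolating unitary $\mathbf{G}$ on $\ell^2(\mathcal H)$, proves $\mathbf{G}W_T$ and $W_{ST}\mathbf{G}^*$ commute using $SD_T=D_TS$, and then exhibits explicit witnessing vectors $y_k=(\eta x_k,0,\zeta x_k,0,\dots)$ with $\eta^2=1/(1-\beta)$, $\zeta^2=-\beta/(1-\beta)$ built from a sequence realizing $\beta\in\mathcal W(S)\cap(-\infty,0]$; the hypothesis on $\mathcal W(S)$ is exactly what makes $\zeta$ real and the limit of $\langle \widetilde{V_T}y_k,\widetilde{V_A}y_k\rangle$ vanish. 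Your doubled-space construction $U_T=\widetilde{U_T}\oplus I$, $U_{ST}=\tilde S\widetilde{U_T}\oplus(-\tilde S)$ checks out: the commutation $\tilde S\widetilde{U_T}=\widetilde{U_T}\tilde S$ holds entry-by-entry (though you should say explicitly that $S$ commutes with $T^*$ by Fuglede's theorem, and hence with $D_T$, $D_{T^*}$ by polynomial approximation of the square root, as the paper does), the joint dilation identity $P_{\mathcal H}U_T^{n_1}U_{ST}^{n_2}|_{\mathcal H}=S^{n_2}T^{n_1+n_2}=T^{n_1}(ST)^{n_2}$ is right, and the parallelogram-type estimate $\max\bigl(\|I+\lambda\tilde S\|,\|I-\lambda\tilde S\|\bigr)\geq\sqrt{1+|\lambda|^2}\geq 1$ does give $U_T\perp_B U_{ST}$. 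What this buys you is a one-stroke proof of the literal statement that, as you note, does not use the hypothesis on $\mathcal W(S)$ at all --- so you have in fact proved a stronger assertion. What it loses is the content the paper is actually after (cf.\ the abstract): the paper's dilation is And\^{o}'s canonical one living on $\ell^2(\mathcal H)$, whereas yours is an ad hoc, highly non-minimal dilation whose orthogonality is engineered by the $\pm\tilde S$ summand rather than reflecting any property of the pair $(T,ST)$; the numerical-range hypothesis is what makes the orthogonality of the \emph{And\^{o}} dilation go through.
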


\begin{proof}
(i) Write $ST=A$ and consider the sequence of unit vectors $(y_n,\mathbf{0})$ in $\mathcal{H}\bigoplus \mathcal{H}$. By Lemma \ref{Simple Lemma} we have
\begin{align*} 
\langle \mathsf{U}_A^*\mathsf{U}_T  (y_n,\mathbf{0}),  (y_n,\mathbf{0})\rangle_{\mathcal{H}\oplus \mathcal{H}} & =\langle \mathsf{U}_T  (y_n,\mathbf{0}), \mathsf{U}_A (y_k,\mathbf{0})\rangle_{\mathcal{H}\oplus \mathcal{H}}\\
& = \langle D_T y_n, D_A y_n \rangle_\mathcal{H}+\langle Ty_n, Ay_n\rangle_\mathcal{H}.\\
& = \langle D_T y_n, D_T y_n \rangle_\mathcal{H}+\langle Ty_n, STy_n\rangle_\mathcal{H}.\\
& = 1-\|Ty_n\|^2+\langle Ty_n, STy_n\rangle_\mathcal{H}.
\end{align*}
By the hypothesis of the theorem, we have
\[
\lim_{n\to \infty} \langle \mathsf{U}_A^*\mathsf{U}_T  (y_n,\mathbf{0}),  (y_n,\mathbf{0})_{\rangle\mathcal{H}\oplus \mathcal{H}}\leq 0.\]
Thus, by Corollary \ref{Corollary to Halmos Blocks}, we have $\widetilde{U_T}\perp_B \widetilde{U_A}$.

\medskip

(ii) It follows from the hypothesis that $\mathcal{W}(S)$ contains a non-positive real number, say $\beta$. Thus, there exists a sequence of unit vectors $(x_k)_{k\in\mathbb{N}}$ in $\mathcal{H}$ such that
\[
\lim_{k \to \infty}\langle Sx_k,x_k\rangle=\beta.\]
Since $S$ is normal and commutes with $T$, $S$ doubly commutes with $T$. Therefore,
\begin{align*}
SD_T^2=S(I_\mathcal{H}-T^*T)=S-ST^*T=S-T^*ST=S-T^*TS=(I_\mathcal{H}-T^*T)S=D_T^2S.  
\end{align*}
By iteration,
\[S(D_T^2)^n=(D_T^2)^nS,\qquad n\geq 0.\]
Therefore,
\[Sp(D_T^2)=p(D_T^2)S,\qquad \mathrm{for~every~polynomial~}p(z)\in \mathbb{C}[z].\]
We can find a sequence of polynomials $(p_n(z))_{n\in \mathbb{N}}$ in $\mathbb{C}[z]$ such that $(p_n(z))_{n\in \mathbb{N}}$ converges to $\sqrt{z}$ uniformly on the closed interval $[0,1]$. Thus, the sequence of operators $(p_n(D_T^2))_{n\in \mathbb{N}}$ converges to $D_T$, since $D_T$ is positive and $\|D_T\|\leq 1$. Consequently,
\[SD_T=\lim_{n\to \infty}Sp_n(D_T^2)=\lim_{n\to \infty} p_n(D_T^2)S=D_TS.\]
Also, we have $ S^*D_T=D_TS^*$. Since $S$ commutes with $T$, the pair $(T,A)$, where $ST=A$, is commuting. Now we have to follow And\^{o}'s explicit construction of isometric dilation for the commuting pair $(T,ST)$. Define isometries $W_T$ and $W_A$ on $\ell^2(\mathcal{H})$ by
\begin{align*}
W_T(x_0,x_1,x_2,x_3,\dots) = & (Tx_0,D_Tx_0,0,x_1,x_2, \dots)\\
W_A(x_0,x_1,x_2,x_3,\dots) = & (Ax_0,D_Ax_0,0,x_1,x_2, \dots)\\
= & (STx_0,D_Tx_0,0,x_1,x_2, \dots).
\end{align*}
Define an operator $\mathbf{G}$ on $\ell^2(\mathcal{H})$ by
\[
\mathbf{G}(x_0,x_1,x_2,x_3,x_4,x_5,x_6,x_7,x_8, \dots)=(x_0,G(x_1,x_2,x_3,x_4),G(x_5,x_6,x_7,x_8), \dots),
\]
where $G:\mathcal{H}\bigoplus \mathcal{H}\bigoplus \mathcal{H}\bigoplus \mathcal{H}\to \mathcal{H}\bigoplus \mathcal{H}\bigoplus \mathcal{H}\bigoplus \mathcal{H}$ is defined by
\[G(u_0,u_1,u_2,u_3)=(S^*u_0,u_1,u_2,u_3).\]
Then $\mathbf{G}$ is a unitary and
\[
\mathbf{G}^*(x_0,x_1,x_2,x_3,x_4,x_5,x_6,x_7,x_8, \dots)=(x_0,G^*(x_1,x_2,x_3,x_4),G^*(x_5,x_6,x_7,x_8), \dots),
\]
where $G^*:\mathcal{H}\bigoplus \mathcal{H}\bigoplus \mathcal{H}\bigoplus \mathcal{H}\to \mathcal{H}\bigoplus \mathcal{H}\bigoplus \mathcal{H}\bigoplus \mathcal{H}$ is given by
\[G^*(u_0,u_1,u_2,u_3)=(Su_0,u_1,u_2,u_3).\]
It is easy to see that $\mathbf{G}W_T$ and $W_A\mathbf{G}^*$ are isometries. We further show that $(\mathbf{G}W_T,W_A\mathbf{G}^*)$ is a commuting pair. 
\begin{align*}
& \mathbf{G}W_TW_A\mathbf{G}^* (x_0,x_1,x_2,x_3,x_4,x_5,x_6,x_7,x_8 \dots)\\
= &  \mathbf{G}W_TW_A(x_0,(Sx_1,x_2,x_3,x_4),(Sx_5,x_6,x_7,x_8), \dots)\\
= & \mathbf{G}W_T (Ax_0, D_Ax_0, 0, (Sx_1,x_2,x_3,x_4),(Sx_5,x_6,x_7,x_8), \dots)\\
= & \mathbf{G}W_T (STx_0, D_Tx_0, 0, (Sx_1,x_2,x_3,x_4),(Sx_5,x_6,x_7,x_8), \dots) \hspace{2 cm} (\mathrm{By~Lemma}~\ref{Simple Lemma})\\
= & \mathbf{G}(TSTx_0,D_TSTx_0,0, D_Tx_0, 0, (Sx_1,x_2,x_3,x_4),(Sx_5,x_6,x_7,x_8), \dots)\\
= & (TSTx_0,(S^*D_TSTx_0,0, D_Tx_0, 0), (x_1,x_2,x_3,x_4),(x_5,x_6,x_7,x_8), \dots)\\
= & (TSTx_0,(D_TS^*STx_0,0, D_Tx_0, 0), (x_1,x_2,x_3,x_4),(x_5,x_6,x_7,x_8), \dots) \hspace{0.4 cm} (\mathrm{since}~S^*D_T=D_TS^*)\\
= & (TSTx_0,(D_TTx_0,0, D_Tx_0, 0), (x_1,x_2,x_3,x_4),(x_5,x_6,x_7,x_8), \dots) \hspace{1.1 cm} (\mathrm{since}~SD_T=D_TS)\\
= & (STTx_0,(D_{ST}Tx_0,0, D_Tx_0, 0), (x_1,x_2,x_3,x_4),(x_5,x_6,x_7,x_8), \dots)\\
= & (ATx_0,D_ATx_0,0, D_Tx_0, 0, x_1,x_2,x_3,x_4,x_5,x_6,x_7,x_8, \dots)\\
= & W_A(Tx_0,D_Tx_0,0, x_1,x_2,x_3,x_4,x_5,x_6,x_7,x_8, \dots)\\
= & W_AW_T(x_0,x_1,x_2,x_3,x_4,x_5,x_6,x_7,x_8 \dots)\\
= & W_A\mathbf{G}^*\mathbf{G}W_T(x_0,x_1,x_2,x_3,x_4,x_5,x_6,x_7,x_8 \dots).
\end{align*}
Let $\widetilde{V_T}= \mathbf{G}W_T$ and $\widetilde{V_A}=W_A\mathbf{G}^*$. The pair $(\widetilde{V_T}, \widetilde{V_A})$ is a commuting pair of isometries. Observe that
\begin{align*}
&\widetilde{V_T}(x_0,x_1,x_2, \dots)=(Tx_0,S^*D_{T}x_0,0,x_1,x_2, \dots),  \qquad & (x_0,x_1,x_2,\dots)\in \ell^2(\mathcal{H}),\\
&\widetilde{V_A}(x_0,x_1,x_2, \dots)=(Ax_0,D_{A}x_0,0,Sx_1,x_2, \dots), \qquad & (x_0,x_1,x_2,\dots)\in \ell^2(\mathcal{H}).
\end{align*}
Therefore,
\[
\widetilde{V_T}^{n_1}\widetilde{V_A}^{n_2}(x_0,x_1,x_2, \dots)=(T^{n_1}A^{n_2}x_0,S^*D_{T}T^{n_1-1}A^{n_2}x_0,0,S^*D_{T}T^{n_1-2}A^{n_2}x_0, \dots), \quad  n_1,n_2 \in \mathbb N \cup \{0\}.
\]
Consequently,
\[
P_\mathcal{H}\widetilde{V_T}^{n_1}\widetilde{V_A}^{n_2}|_\mathcal{H}=T^{n_1}A^{n_2}, \quad  n_1,n_2 \in \mathbb N \cup \{0\}.
\]
Thus, $(\widetilde{V_T}, \widetilde{V_A})$ is an isometric dilation of $(T,A)$ on $\ell^2(\mathcal{H})$. We now show that $\widetilde{V_T}\perp_B \widetilde{V_A}$. For the unit vectors $x_k\,,$ let
\[
y_k=\left(\eta x_k,0,\zeta x_k,0,0,0,\dots\right), \qquad k\in \mathbb{N},
\]
where
\[
\eta=\frac{1}{\sqrt{1-\beta}}, \qquad \zeta=\sqrt{\frac{-\beta}{1-\beta}}.
\]
Then $(y_k)_{k\in \mathbb{N}}$ is a sequence of unit vectors in $\ell^2(\mathcal{H})$, and we have
\begin{align*}
& \left\langle \widetilde{V_T}y_k, \widetilde{V_A}y_k\right\rangle\\
= &\left\langle \widetilde{V_T}\left(\eta x_k,0,\zeta x_k,0,0,0,\dots\right), \widetilde{V_A}\left(\eta x_k,0,\zeta x_k,0,0,0,\dots\right)\right\rangle_{\ell^2(\mathcal{H})}\\
= & \left\langle \mathbf{G}W_T\left(\eta x_k,0,\zeta x_k,0,0,\dots\right), W_A\mathbf{G}^*\left(\eta x_k,0,\zeta x_k,0,0,0,\dots\right)\right\rangle_{\ell^2(\mathcal{H})}\\
= & \left\langle \mathbf{G}\left(\eta Tx_k,\eta D_Tx_k,0,0,\zeta x_k, 0,0,0,\dots\right), W_A\left(\eta x_k,0,\zeta x_k,0,0,0,\dots\right)\right\rangle_{\ell^2(\mathcal{H})}\\
= & \left\langle \left(\eta Tx_k,\eta S^* D_Tx_k,0,0,\zeta x_k, 0,0,0,\dots\right), \left(\eta STx_k,\eta D_Tx_k,0,0,\zeta x_k, 0,0,0, \dots\right)\right\rangle_{\ell^2(\mathcal{H})}\\
= & \eta^2\langle Tx_k, STx_k\rangle_\mathcal{H}+\eta^2\langle S^*D_Tx_k,D_Tx_k \rangle_\mathcal{H}+\zeta^2\|x_k\|^2\\
= & \eta^2\langle Tx_k, STx_k\rangle_\mathcal{H}+\eta^2\langle S^*D_T^2x_k,x_k \rangle_\mathcal{H}+\zeta^2\|x_k\|^2\qquad (\mathrm{since}~D_TS^*=S^*D_T)\\
= & \eta^2\langle Tx_k, STx_k\rangle_\mathcal{H}+\eta^2\langle S^*(I_\mathcal{H}-T^*T)x_k,x_k \rangle_\mathcal{H}+\zeta^2\|x_k\|^2\\
= & \eta^2\langle Tx_k, STx_k\rangle_\mathcal{H}+\eta^2\langle x_k,Sx_k\rangle_\mathcal{H}-\eta^2\langle T^*Tx_k,Sx_k \rangle_\mathcal{H}+\zeta^2\|x_k\|^2\\
= & \eta^2\langle x_k,Sx_k\rangle_\mathcal{H}+\zeta^2\|x_k\|^2
\end{align*}
Now,
\[
\lim_{k\to \infty} \eta^2\langle x_k,Sx_k\rangle_\mathcal{H}+\zeta^2=\eta^2\beta+\zeta^2=0.\]
Therefore, ${ \displaystyle \lim_{k\to \infty} \left\langle \widetilde{V_T}y_k, \widetilde{V_A}y_k\right\rangle=0}$ and $\widetilde{V_T}\perp_B \widetilde{V_A}$. It follows that (see CH-I of \cite{NFBK}) the commuting pair of isometries $\left(\widetilde{V_T}, \widetilde{V_A}\right)$ extends to a commuting pair of unitary operators $(U_T,U_A)$ on some Hilbert space $\mathcal{K}\supseteq \ell^2(\mathcal{H})$.
Therefore, it follows that (see CH-I of \cite{NFBK}) $(U_T,U_A)$ is commuting unitary dilation of $(T,A)$. Note that $U_T$ and $U_A$ are norm preserving extensions of $\widetilde{V_T}$ and $\widetilde{V_A}$, respectively. Consequently, by Lemma \ref{Orthogonality of extensions}, $U_T\perp_B U_A$, since $\widetilde{V_T}\perp_B \widetilde{V_A}$. This completes the proof.

\end{proof}

The construction in the proof of Theorem \ref{Class of examples} ensures that $\widetilde{U_T}\perp_B \widetilde{U_{ST}}$ and $\widetilde{V_T}\perp_B \widetilde{V_{ST}}$ when $S$ commutes with $T$, but $T$ and $ST$ may or may not be orthogonal. Indeed, the fact that $\widetilde{U_T}\perp_B \widetilde{U_{ST}}$ or $\widetilde{V_T}\perp_B \widetilde{V_{ST}}$ does not imply $T\perp_B {ST}$ even if $\|T\|=1$ and $S, T$ commute. The following example explains this.

\begin{example}
Let $T$ be a contraction acting on a Hilbert space $\mathcal{H}$ such that there is a sequence of unit vectors $(x_n)$ in $\mathcal{H}$ for which
\[
\lim_{n\to \infty }  2\|Tx_n\|^2 \geq 1.
\]
Choose $S=-I_\mathcal{H}$ and let $A=ST=-T$. Then by Theorem \ref{Class of examples}, we have $\widetilde{V_T}\perp_B \widetilde{V_A}$ as members of $\ell^2(\mathcal{H})$. Moreover, since
\[
\lim_{n\to \infty } (1-\|Tx_n\|^2+\langle Tx_n, STx_n\rangle)=\lim_{n\to \infty } (1-2\|Tx_n\|^2)\leq 0,
\]
by Theorem \ref{Class of examples}, we have $\widetilde{U_T}\perp_B \widetilde{U_A}$. However, it is evident that $T\not\perp_B A$ and $A\not\perp_B T$. \qed

\end{example}

We next turn our attention towards the orthogonality of regular unitary dilations. Let $\mathcal{T}=(T_1,T_2,\dots, T_k)$ be a commuting tuple of contractions on a Hilbert space $\mathcal{H}$. Given any $\alpha=(\alpha_1,\alpha_2,\dots,\alpha_k)\in \mathbb{Z}^k_+$, let $supp (\alpha) = \{i:~\alpha_i\neq 0\}$ and we define $\mathcal{T}^{\alpha}=T_1^{\alpha_1}T_2^{\alpha_2}\dots T_k^{\alpha_k}$. A commuting tuple of unitaries $\mathcal{U}=(U_1, U_2,\dots, U_k)$ on a Hilbert space $\mathcal{K}$ is said to be a \textit{regular dilation} of the tuple $\mathcal{T}$, if $\mathcal{H}\subseteq \mathcal{K}$ and
\[
(\mathcal{T}^{\alpha})^*\mathcal{T}^\beta = P_\mathcal{H}(\mathcal{U}^{\alpha})^*\mathcal{U}^\beta|_{\mathcal{H}},
\]
where $\alpha, \beta$  are members of $\mathbb{Z}^k_{+}$ with $supp~(\alpha) \cap supp~(\beta) = \emptyset$. A necessary and sufficient condition for $\mathcal{T}$ to have regular unitary dilation is that it satisfies Brehmer positivity condition (see\cite{Brehmer} or \cite[Section 9]{NFBK}) i.e.,
\[
\sum_{F\subseteq G}(-1)^{|F|}\mathbf{T}_F^*\mathbf{T}_F\geq \mathbf{0},
\]
for every $G\subseteq \{1,2, \dots, k\}$, where
\[
\mathbf{T}_F=\prod_{j\in F} T_j, \qquad \mathrm{and} \qquad \mathbf{T}_{\emptyset}=I_\mathcal{H}.
\]
We shall now see that if a pair of commuting contractions $(T_1,T_2)$ satisfies Brehmer's positivity condition then it admits a regular unitary dilation that preserves orthogonality. Indeed, if the maximal numerical range (as in Equation-(\ref{eqn:sec3-03})) $\mathcal W(T_2^*T_1)$ contains $0$, which is a weaker condition than $T_1\perp_B T_2$, then also $(T_1,T_2)$ possesses an orthogonal regular unitary dilation.
 
\begin{theorem}\label{Regular unitary dilation}
Let $\mathcal{T}=(T_1,T_2)$ be a commuting pair of contractions on a Hilbert space $\mathcal{H}$ that satisfies Brehmer's positivity condition. Suppose that $0\in \mathcal{W}(T_2^*T_1)$. Then the pair $(T_1,T_2)$ possesses regular a unitary dilation $\mathcal{U}=(U_1,U_2)$ on some Hilbert space $\mathcal{K}\supseteq \mathcal{H}$ such that $U_1\perp_B U_2$. In particular, if $T_1\perp_B T_2$, then $U_1\perp_B U_2$.

\end{theorem}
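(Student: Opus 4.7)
The plan is to reduce the statement to Theorem \ref{Bhatia-Semrl} by a direct transfer of norming-type sequences from $\mathcal{H}$ to $\mathcal{K}$, exploiting the defining feature of regular unitary dilations together with the unitarity of $U_1$.

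First I would invoke Brehmer's theorem (see \cite{Brehmer} or \cite[Section 9]{NFBK}): since $(T_1,T_2)$ satisfies Brehmer's positivity condition, there exists a commuting pair of unitaries $\mathcal{U}=(U_1,U_2)$ on some Hilbert space $\mathcal{K}\supseteq \mathcal{H}$ which is a regular unitary dilation of $(T_1,T_2)$. The key consequence I want to extract is the mixed-product identity obtained by specializing the regularity relation to $\alpha=(0,1)$ and $\beta=(1,0)$, whose supports are disjoint; this yields
\[
T_2^{*}T_1 \;=\; P_{\mathcal{H}}\,U_2^{*}U_1\big|_{\mathcal{H}}.
\]

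Next I would use the hypothesis $0\in \mathcal{W}(T_2^{*}T_1)$ to produce a sequence of unit vectors $(x_n)\subseteq \mathcal{H}$ with $\lim_{n}\langle T_2^{*}T_1 x_n, x_n\rangle_{\mathcal{H}}=0$ (the norm-attainment clause in the definition of the maximal numerical range is more than I need). Regarding each $x_n$ as a unit vector in $\mathcal{K}$, I would compute
\[
\langle U_1 x_n, U_2 x_n\rangle_{\mathcal{K}} \;=\; \langle U_2^{*}U_1 x_n, x_n\rangle_{\mathcal{K}} \;=\; \langle P_{\mathcal{H}}U_2^{*}U_1 x_n, x_n\rangle_{\mathcal{H}} \;=\; \langle T_2^{*}T_1 x_n, x_n\rangle_{\mathcal{H}} \;\longrightarrow\; 0.
\]
Since $U_1$ is unitary, $\|U_1 x_n\|=1=\|U_1\|$ for every $n$, so $(x_n)$ is automatically a norming sequence of $U_1$ in $\mathcal{K}$. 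Theorem \ref{Bhatia-Semrl} then yields $U_1\perp_B U_2$.

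For the ``in particular'' clause, if $T_1\perp_B T_2$ then Theorem \ref{Bhatia-Semrl} supplies a norming sequence $(x_n)$ of $T_1$ in $\mathcal{H}$ with $\lim_n \langle T_1 x_n, T_2 x_n\rangle_{\mathcal{H}}=\lim_n \langle T_2^{*}T_1 x_n, x_n\rangle_{\mathcal{H}}=0$, which is precisely the kind of sequence used above, and the same transfer argument gives $U_1\perp_B U_2$. There is no real obstacle here: the only non-mechanical point is recognizing that the hypothesis $0\in \mathcal{W}(T_2^{*}T_1)$ is strictly weaker than $T_1\perp_B T_2$ yet still strong enough, because the norming condition for $U_1$ is vacuous (every unit vector is norming for the unitary $U_1$) and so the sole nontrivial requirement is the inner-product vanishing, which lifts verbatim from $\mathcal{H}$ to $\mathcal{K}$ via the regularity identity.
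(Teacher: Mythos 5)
Your proposal is correct and follows essentially the same route as the paper: invoke Brehmer's theorem for the existence of the regular unitary dilation, specialize the regularity identity to disjoint supports to get $T_2^*T_1=P_{\mathcal{H}}U_2^*U_1|_{\mathcal{H}}$, transfer the sequence witnessing $\langle T_2^*T_1x_n,x_n\rangle\to 0$ to $\mathcal{K}$, and use that every unit-vector sequence is norming for the unitary $U_1$. Your handling of the ``in particular'' clause is marginally cleaner than the paper's (which routes it through the claim $0\in\mathcal{W}(T_2^*T_1)$, glossing over the norm-attainment clause for $T_2^*T_1$), but the argument is the same in substance.
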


\begin{proof}
Since the system $\mathcal{T}$ satisfies Brehmer's positivity condition, it follows from \cite[Section 9]{NFBK} that the group $\mathbb{Z}_2$ possesses a unitary representation on a Hilbert space $\mathcal{K}\supseteq \mathcal{H}$. To be more precise, there exists a homomorphism $\phi:\mathbb{Z}_2\to \mathcal{B}({\mathcal{K}})$ such that 
\begin{itemize}
\item[$\bullet$] $\phi(g)$ is unitary for each $g\in \mathbb{Z}_2$.
\item[$\bullet$] $\phi(g^{-1})=\phi(g)^*$.
\item[$\bullet$] The system $(\phi(e_1), \phi(e_2))$ is a regular unitary dilation of $(T_1,T_2)$, where $e_1=(1,0)$ and $e_2=(0,1)$.

\end{itemize}
Since $0\in \mathcal{W}(T_2^*T_1)$, there exists a sequence of unit vectors $(x_n)$ such that
\[
\lim_{n\to \infty} \langle T_2^*T_1 x_n,x_n \rangle_\mathcal{H} = 0.
\]
Now,
\begin{align*}
\langle T_2^* T_1x_n, x_n \rangle_\mathcal{H}
= \langle P_\mathcal{H}\phi(e_2)^*\phi(e_1)x_n,x_n\rangle_\mathcal{K}
= \langle \phi(e_1)x_n,\phi(e_2)x_n\rangle_\mathcal{K}.
\end{align*}
Taking limit on the both sides we have
\[
\lim_{n\to\infty} \langle \phi(e_1)x_n,\phi(e_2)x_n\rangle_\mathcal{K}=0.
\]
Since every sequence of unit vectors is a norming sequence for a unitary, we have $\phi(e_1)\perp_B \phi(e_2)$. 

\medskip

It remains to show that if $T_1\perp_B T_2$, then $0 \in \mathcal W(T_2^*T_1)$. Indeed, if $T_1\perp_B T_2$, then there exists a norming sequence $(x_n)$ of $T_1$ such that 
$
\lim_{n\to\infty}\langle T_1 x_n, T_2 x_n \rangle=0.
$
So, we have $0\in \mathcal{W}(T_2^*T_1)$ and the proof is complete.

\end{proof}

For a pair of commuting contraction $(T_1,T_2)$ satisfying Brehmer's positivity condition, it may so happen that $0\in \mathcal{W}(T_2^*T_1)$ but $T_1\not\perp_B T_2$ and $T_2\not\perp_B T_1$ as the following example shows. 

\begin{example}
Consider the matrices 
\[
T_1=\begin{bmatrix}
1 & 0\\
0 & 0
\end{bmatrix} \quad \mathrm{and} \quad T_2=\begin{bmatrix}
1 & 0\\
0 & \frac{1}{2}
\end{bmatrix}.
\]
Evidently, $T_1T_2=T_2T_1$. Note that
\[
I_2-T_1^*T_1-T_2^*T_2+T_1^*T_2^*T_1T_2=\begin{bmatrix}
0 & 0\\
0 & \frac{3}{4}
\end{bmatrix}\geq 0.
\]
Therefore, the pair $(T_1,T_2)$ is a pair of commuting contraction that satisfies Brehmer positivity condition. It is easy to see that the norm attainment sets
\[M_{T_1}=M_{T_2}=\{\mu e_1:~|\mu|=1\},\]
and $\langle T_1e_1, T_2e_1\rangle = 1$. Thus, $T_1\not\perp_B T_2$. However, $0\in \mathcal{W}(T_2^*T_1)$, as $\langle T_2^*T_1 e_2, e_2 \rangle = 0.$
\end{example}
We conclude this article with the following corollary of Theorem \ref{Regular unitary dilation} whose proof is straight-forward.

\begin{cor}
Let $\mathcal{T}=(T_1,T_2)$ be a commuting pair of contractions on a Hilbert space $\mathcal{H}$. Then $(T_1,T_2)$ possesses an orthogonal regular unitary dilation $\mathcal{U}=(U_1,U_2)$ if anyone of the following holds.
\begin{itemize}
    \item[(i)] The pair $(T_1,T_2)$ is doubly commuting and $0\in \mathcal{W}(T_2^*T_1)$.
    \item[(ii)] The pair $(T_1,T_2)$ satisfies $\|T_1\|^2+\|T_2\|^2\leq 1$ and $0\in \mathcal{W}(T_2^*T_1)$.
   \end{itemize}
   \qed
\end{cor}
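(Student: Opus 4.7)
The plan is to deduce this corollary as a direct application of Theorem \ref{Regular unitary dilation}, whose two hypotheses are Brehmer's positivity of the pair $(T_1,T_2)$ and the condition $0\in \mathcal{W}(T_2^*T_1)$. The second hypothesis is already built into both (i) and (ii), so the entire task reduces to verifying Brehmer's positivity in each case. For a pair of contractions, Brehmer's condition reduces to the single nontrivial inequality
\[
I - T_1^*T_1 - T_2^*T_2 + (T_1T_2)^*(T_1T_2) \geq 0,
\]
since for the subsets of size at most one the inequalities amount to $I\geq 0$ and $I-T_j^*T_j \geq 0$, both of which hold trivially.

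For (i), I would use the doubly commuting hypothesis to factor the left-hand side. Doubly commuting gives $T_1^*T_2 = T_2 T_1^*$ in addition to $T_1T_2=T_2T_1$, so $T_1^*T_1$ commutes with $T_2^*T_2$, and moreover
\[
(T_1T_2)^*(T_1T_2) = T_2^*T_1^*T_1T_2 = T_1^*T_1\, T_2^*T_2.
\]
Thus the left-hand side equals $(I-T_1^*T_1)(I-T_2^*T_2) = D_{T_1}^2 D_{T_2}^2$, a product of two commuting positive operators, which is positive.

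For (ii), I would verify positivity directly by pairing with a vector. For any $x \in \mathcal H$,
\[
\langle (I - T_1^*T_1 - T_2^*T_2 + (T_1T_2)^*(T_1T_2))x, x\rangle = \|x\|^2 - \|T_1 x\|^2 - \|T_2 x\|^2 + \|T_1T_2 x\|^2,
\]
and this is bounded below by $(1-\|T_1\|^2-\|T_2\|^2)\|x\|^2 + \|T_1T_2 x\|^2$, which is nonnegative by the hypothesis $\|T_1\|^2+\|T_2\|^2 \leq 1$.

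In both situations Brehmer's positivity is established, and an application of Theorem \ref{Regular unitary dilation} then delivers the desired orthogonal regular unitary dilation $(U_1,U_2)$ of $(T_1,T_2)$. There is no genuine obstacle here; the argument is a routine verification that the respective special hypotheses imply Brehmer's positivity, after which the heavy lifting is done by the preceding theorem.
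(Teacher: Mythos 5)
Your proposal is correct and follows exactly the route the paper intends: the paper omits the proof as ``straight-forward,'' and what it has in mind is precisely your reduction to Theorem \ref{Regular unitary dilation} by checking Brehmer's positivity, with the factorization $(I-T_1^*T_1)(I-T_2^*T_2)\geq 0$ in the doubly commuting case and the direct quadratic-form estimate in the norm-condition case. Both verifications are complete and accurate, so there is nothing to add.
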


\vspace{0.4cm}

\noindent \textbf{Acknowledgement.} The first named author is supported by the ``Early Research Achiever Award Grant" of IIT Bombay with Grant No. RI/0220-10001427-001. The second named author is supported by the Institute Postdoctoral Fellowship of IIT Bombay.\\

\vspace{0.3cm}

\end{document}